\newcommand{\fraka}{\mathfrak{a}}
\newcommand{\frakg}{\mathfrak{g}}
\newcommand{\frakh}{\mathfrak{h}}
\newcommand{\frakk}{\mathfrak{k}}
\newcommand{\frakq}{\mathfrak{q}}
\newcommand{\CC}{\mathbb{C}}
\newcommand{\NN}{\mathbb{N}}
\newcommand{\RR}{\mathbb{R}}
\newcommand{\ZZ}{\mathbb{Z}}
\newcommand{\calH}{\mathcal{H}}
\newcommand{\calL}{\mathcal{L}}
\newcommand{\0}{{\bf 0}}
\newcommand{\1}{{\bf 1}}
\DeclareMathOperator{\GL}{GL}
\DeclareMathOperator{\upO}{O}
\DeclareMathOperator{\SL}{SL}
\DeclareMathOperator{\Ind}{Ind}
\DeclareMathOperator{\Res}{Res}
\DeclareMathOperator{\tr}{tr}
\DeclareMathOperator{\Ad}{Ad}
\DeclareMathOperator{\proj}{proj}
\DeclareMathOperator{\id}{id}
\DeclareMathOperator{\sgn}{sgn}
\DeclareMathOperator{\diag}{diag}
\renewcommand\Re{\operatorname{Re}}
\newcommand{\hol}{\textup{hol}}
\newcommand{\ahol}{\textup{ahol}}
\DeclareMathOperator{\fraksl}{\mathfrak{sl}}
\theoremstyle{plain}
\newtheorem{theorem}{Theorem}[section]
\newtheorem{proposition}[theorem]{Proposition}
\newtheorem{lemma}[theorem]{Lemma}
\newtheorem{corollary}[theorem]{Corollary}
\newtheorem{Remark}[theorem]{Remark}
\DeclareMathOperator{\ankh}{\text{\Large \Ankh}}
\DeclareMathOperator{\caprisun}{\text{\Capricorn}}
\theoremstyle{definition}
\title[An explicit Plancherel formula for the hyperboloid]{An explicit Plancherel formula for line bundles over the one-sheeted hyperboloid}
\author{Frederik Bang--Jensen \& Jonathan Ditlevsen}
\begin{document}

\maketitle
\begin{abstract}
In this paper we consider $G=\SL(2,\RR)$ and $H$ the subgroup of diagonal matrices. Then $X=G/H$ is a unimodular homogeneous space which can be identified with the one-sheeted hyperboloid. For each unitary character $\chi$ of $H$ we decompose the induced representations $\Ind_H^G(\chi)$ into irreducible unitary representations, known as a Plancherel formula. This is done by studying explicit intertwining operators between $\Ind_H^G(\chi)$ and principal series representations of $G$. These operators depends holomorphically on the induction parameters.

\end{abstract}

\section*{Introduction}

\noindent The Plancherel formula for a unimodular homogeneous space $X=G/H$ of a Lie group $G$ describes the decomposition of the left-regular representation of $G$ on $L^2(X)$ into irreducible unitary representations. More generally, one can ask for the decomposition of $L^2(G\times_H V_\chi)$, the $L^2$-sections of a homogeneous vector bundle associated with a unitary representation $(\chi,V_\chi)$ of $H$. In representation theoretic language, this corresponds to the induced representation $\Ind_H^G(\chi)$ of $G$, and for the trivial representation $\chi=\1$ we recover $L^2(G/H)$.

By abstract theory, the unitary representation $\Ind_H^G(\chi)$ decomposes into a direct integral of irreducible unitary representations of $G$, i.e. there exists a measure $\mu$ on the unitary dual $\widehat{G}$ of $G$ and a multiplicity function $m:\widehat{G}\to\NN\cup\{\infty\}$ such that
$$ \Ind_H^G(\chi) \simeq \int^\oplus_{\widehat{G}}m(\pi)\cdot\pi\,d\mu(\pi). $$
An \emph{abstract} Plancherel formula describes the support of the Plancherel measure $\mu$ as well as the multiplicity function $m$. Such abstract Plancherel formulas have been established for certain classes of homogeneous spaces such as semisimple symmetric spaces (see e.g. \cite{vdB}).

However, for some applications an abstract Plancherel formula is not sufficient, and a more \emph{explicit} version is needed (see e.g. \cite{FW,Wei}). By this, we mean an explicit formula for the measure $\mu$ as well as explicit linearly independent intertwining operators $A_{\pi,j}:\Ind_H^G(\chi)^\infty\to\pi^\infty$, $j=1,\ldots,m(\pi)$, for $\mu$-almost every $\pi\in\widehat{G}$ such that
$$ \|f\|_{\Ind_H^G(\chi)}^2 = \int_{\widehat{G}} \sum_{j=1}^{m(\pi)}\|A_{\pi,j}f\|_\pi^2\,d\mu(\pi) \qquad (f\in\Ind_H^G(\chi)^\infty). $$

Such an explicit Plancherel formula is for instance known for Riemannian symmetric spaces $X=G/K$, where the Plancherel measure $\mu$ is explicitly given in terms of Harish-Chandra's $c$-function and the intertwining operators $A_{\pi,j}$ can be described in terms of spherical functions (see e.g. \cite{Hel}, and also \cite{Shi} for the case of line bundles over Hermitian symmetric spaces). This explicit Plancherel formula has recently been applied in the context of branching problems for unitary representations where the explicit Plancherel measure and in particular its singularities play a crucial role (see e.g. \cite{FW,Wei}). In order to apply the same strategy to other branching problems, explicit Plancherel formulas are needed for more general homogeneous spaces.

In this paper, we determine the explicit Plancherel formula for line bundles over the one-sheeted hyperboloid $X=G/H$, where $G=\SL(2,\RR)$ and $H$ the subgroup of diagonal matrices. This specific Plancherel formula has direct applications to branching problems for the pairs $(\SL(2,\RR)\times \SL(2,\RR),\diag(\SL(2,\RR))$ and $(\GL(3,\RR),\GL(2,\RR))$. The homogeneous Hermitian line bundles over $X$ are parameterized by $\varepsilon\in\ZZ/2\ZZ$ and $\lambda\in i\RR$, the corresponding unitary character of $H$ being
$$ \chi_{\varepsilon,\lambda}\begin{pmatrix}t&0\\0&t^{-1}\end{pmatrix} = \sgn(t)^\varepsilon|t|^\lambda \qquad (t\in\RR^\times). $$
We find intertwining operators $A_{\lambda,\mu}^\xi: \Ind_H^G(\chi_{\lambda,\varepsilon})\to \Ind_P^G(\varepsilon\otimes e^\mu\otimes 1),$ $\xi=0,1$ 
between the line bundles over $X$ and the principal series representation (see Proposition \ref{definition af intertwiner}).
\begin{theorem}[See Theorem \ref{hovedresultat}]
For $f\in \Ind_H^G(\chi_{\lambda,\varepsilon})$, $\lambda \in i\RR$ and $\varepsilon\in \{0,1\}$ we have 
\begin{equation}\label{intro hovedresultat}
\| f \|^2=\int_{i\RR} \sum_{\xi=0}^1 \| \mathbf{A}_{\lambda,\mu}^\xi f \|^2 \frac{d\mu}{|a(\mu,\varepsilon)|^2}+\sum_{\mu \in 1-\varepsilon-2\NN} c(\mu,\varepsilon) \| \mathbb{A}_{\lambda,\mu}^\varepsilon f \|^2,
\end{equation}
where $\mathbf{A}_{\lambda,\mu}$ and $\mathbb{A}_{\lambda,\mu}$ are some combinations of $A_{\lambda,\mu}^0$ and $A_{\lambda,\mu}^1$. 
\end{theorem}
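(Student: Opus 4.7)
The plan is to verify the identity \eqref{intro hovedresultat} on the dense subspace of $K$-finite vectors in $\Ind_H^G(\chi_{\lambda,\varepsilon})$, where $K = \mathrm{SO}(2)$, thereby reducing the operator-valued Plancherel identity to a scalar Parseval identity for each $K$-type. Since $K \cap H = \{\pm I\}$, Frobenius reciprocity yields a multiplicity-free $K$-decomposition by the characters of $K$ of parity $\varepsilon$. Writing $f = \sum_n c_n e_n$ for an $L^2$-orthonormal basis $(e_n)_{n \equiv \varepsilon \pmod 2}$ of $K$-finite vectors, the left-hand side simply becomes $\|f\|^2 = \sum_n |c_n|^2$.

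The next step is to compute the action of the intertwiners on $K$-types. Since each $A_{\lambda,\mu}^\xi$ is $K$-equivariant and each $K$-type is one-dimensional on both sides, $A_{\lambda,\mu}^\xi$ sends $e_n$ to a scalar multiple $\alpha_n^\xi(\lambda,\mu)$ of the corresponding $K$-type vector in $\Ind_P^G(\varepsilon \otimes e^\mu \otimes 1)$. Using the explicit integral formula from Proposition \ref{definition af intertwiner}, the scalar $\alpha_n^\xi(\lambda,\mu)$ can be computed as a hypergeometric integral and expressed in closed form as a ratio of Gamma functions. An analogous calculation yields scalars $\beta_n(\lambda,\mu)$ for the discrete intertwiners $\mathbb{A}_{\lambda,\mu}^\varepsilon$.

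After substituting these expressions and using orthogonality of distinct $K$-types on both sides of \eqref{intro hovedresultat}, the identity reduces to a single Parseval formula for each fixed $n$:
\begin{equation*}
1 = \int_{i\RR}\sum_\xi \frac{|\alpha_n^\xi(\lambda,\mu)|^2}{|a(\mu,\varepsilon)|^2}\,d\mu + \sum_{\mu \in 1-\varepsilon-2\NN} c(\mu,\varepsilon)\,|\beta_n(\lambda,\mu)|^2.
\end{equation*}
This is expected to be a continuous-plus-discrete Parseval identity of Jacobi-function or Meixner--Pollaczek type, already available in the special-functions literature. Alternatively, it can be derived directly by a residue calculation: one starts from the continuous term on a shifted contour $\Re(\mu) = c > 0$, shifts back to $i\RR$, and collects residues at the poles $\mu \in 1 - \varepsilon - 2\NN$ of $|a(\mu,\varepsilon)|^{-2}$; these residues then furnish precisely the discrete weights $c(\mu,\varepsilon)$.

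The main obstacle will be the explicit computation and bookkeeping of the scalars $\alpha_n^\xi(\lambda,\mu)$, and in particular isolating the correct combinations $\mathbf{A}_{\lambda,\mu}^\xi$ that diagonalise the standard Knapp--Stein intertwiner on the principal series, together with the combinations $\mathbb{A}_{\lambda,\mu}^\varepsilon$ that pick out the discrete-series sub-quotient at the reducible points $\mu \in 1 - \varepsilon - 2\NN$. Matching the residues with the Plancherel constants of the discrete series of $\SL(2,\RR)$ and verifying the overall normalization is where the technical heart of the argument will lie; everything else is formal manipulation of $K$-finite Parseval identities.
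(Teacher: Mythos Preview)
There is a genuine gap at the very first step. You assert that Frobenius reciprocity gives a multiplicity-free $K$-decomposition of $\Ind_H^G(\chi_{\lambda,\varepsilon})$ with one-dimensional $K$-types, so that $\|f\|^2 = \sum_n |c_n|^2$. This is false: here $H = MA$ and $G \neq KH$; in fact $G = KBA$ with $B$ a one-parameter subgroup, so the $K$-isotypic components are
\[
\Ind_H^G(\varepsilon \otimes e^\lambda)_m \;\cong\; \CC\,\zeta_m \otimes L^2\big(\RR,\cosh(2u)\,du\big),
\]
each infinite-dimensional (see \eqref{K-types}). Hence $A_{\lambda,\mu}^\xi$ restricted to a single $K$-type is not multiplication by a scalar $\alpha_n^\xi(\lambda,\mu)$ but a linear functional on $L^2(\RR)$ (the target $K$-type in the principal series \emph{is} one-dimensional, the source is not). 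Your reduction to the scalar identity $1 = \int + \sum$ therefore never gets off the ground; there is still an entire variable's worth of analysis to do on each $K$-type.

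What actually happens is that on each $K$-type the Plancherel identity becomes a one-variable Plancherel formula for an integral transform in the radial coordinate $u$. The paper identifies this transform, for $\lambda = 0$, with the Fourier--Jacobi transform, and imports its inversion formula (continuous plus discrete part) from Flensted--Jensen (Appendix~\ref{Fourier Jacobi}); the discrete terms do arise from residues, as you anticipate, but of the Jacobi $c$-function rather than of $|a(\mu,\varepsilon)|^{-2}$, and the sets $D_0,D_1$ of poles depend on $m$. The non-trivial work is then to recombine the two Jacobi transforms $J_0,J_1$ into the operators $\mathbf{A}_\mu^\xi$ and, especially, to find the correct linear combination $\mathbb{A}_\mu$ (resp.\ $\mathcal{A}_\mu$) for the discrete part---this combination is quite different for $\varepsilon=0$ and $\varepsilon=1$. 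Finally, a separate step passes from $\lambda=0$ to general $\lambda\in i\RR$ via an explicit unitary $G$-isomorphism $\Ind_H^G(\varepsilon\otimes e^\lambda)\to\Ind_H^G(\varepsilon\otimes e^0)$ (Theorem~\ref{indre Ind}). Your proposal contains no analogue of either ingredient.
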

\noindent The proof of \eqref{intro hovedresultat} consists of two steps. First, we prove \eqref{intro hovedresultat} in the case $\lambda=0$ separately for each $K$-isotypic component. On a fixed $K$-isotypic component, the intertwining operators $A_{\lambda,\mu}^\xi$ are essentially Fourier--Jacobi transforms, and the Plancherel formula follows from the spectral decomposition of the corresponding ordinary second order differential operator by Sturm--Liouville theory. The main difficulty is that the continuous spectrum occurs with multiplicity two, while the discrete part occurs with multiplicity-one, and it is non-trivial to find the right linear combination of $A_\mu^0$ and $A_\mu^1$ that corresponds to a direct summand. In fact, this linear combination is very different for the cases $\varepsilon=0$ and $\varepsilon=1$. In the second step, we show that, as a representation of $G$, $L^2(G/H,\calL_{\varepsilon,\lambda})$ is independent of $\lambda$, and by finding an explicit unitary isomorphism $L^2(G/H,\calL_{\varepsilon,\lambda})\to L^2(G/H,\calL_{\varepsilon,0})$ we deduce the claimed formula.

We remark that for $\varepsilon=0$ and general $\lambda\in i\RR$ the Plancherel formula was recently obtained by Zhu~\cite{Zhu}. Moreover, for $\varepsilon=0$ and $\lambda=0$ our Plancherel formula can be viewed as a special case of the one for pseudo-Riemannian real hyperbolic spaces $\upO(p,q)/\upO(p,q-1)$ with $p=1$ and $q=2$ which was obtained by Faraut~\cite{Far}, Rossmann~\cite{Ros} and Strichartz~\cite{Str}. Note also that the corresponding \emph{abstract} Plancherel formula, i.e. the description of the representations occurring in the direct integral decomposition, also follows from the general theory (see e.g. \cite{vdB}).

\subsection*{Acknowledgements}
We would like to thank our supervisor Jan Frahm for his help and input on the topics of this paper.
\subsection*{Notation}
$\NN =\{1,2,3\dots\}$, $ \NN_0=\NN\cup \{0\}$. For $A\subseteq \RR$ and $ b,c\in \RR$ we denote by $b+cA=\{b+ca\,|\,a\in A\}$. The Pochhammer symbol is $(x)_n=x(x+1)\cdots(x+n-1)$. We denote Lie groups
by Roman capitals and their corresponding Lie algebras by the corresponding Fraktur
lower cases. For $m\in \ZZ$ we let $[m]_2\in \{0,1\} $ be the remainder of $m$ after division by 2.

\section{The principal series of \texorpdfstring{$\SL(2,\RR)$}{SL(2,R)}}\label{Principal series section}
\noindent In this section we recall some results about the representation theory of $\SL(2,\RR)$ following \cite{C20}. Let $G=\SL(2,\RR)$ and consider the following subgroups
$$M=\{\pm I\},\quad A=\left \{ \begin{pmatrix}t & 0 \\ 0 & t^{-1}\end{pmatrix}\, :\, t\in \RR_{>0}\right \},\quad N=\left \{ \begin{pmatrix}1 & x \\ 0 & 1 \end{pmatrix}\,:\, x\in \RR \right \},$$
then $P=MAN$ is a minimal parabolic subgroup of $G$. Identify $\widehat{M}\cong \ZZ/2\ZZ$ by mapping $\varepsilon\in \ZZ/2\ZZ$ to the character
\[
M\to \{\pm 1\},\,\quad \begin{pmatrix}
\pm 1 & 0\\
0 & \pm 1
\end{pmatrix}\mapsto (\pm 1)^\varepsilon. 
\]
Further, we identify $\fraka_\CC^*\cong \CC$ by mapping $\lambda\mapsto \lambda\big(\diag(1,-1)\big).$ We can then observe that any character of $H:=MA$ is of the form $\chi_{\varepsilon,\lambda}=\varepsilon\otimes e^\lambda$ where 
\[
\chi_{\varepsilon,\lambda}
\begin{pmatrix} t & 0 \\
0 & t^{-1} \end{pmatrix}
= |t|^\lambda_\varepsilon:=\sgn(t)^\varepsilon|t|^\lambda,\quad \quad (t\in \RR^\times).
\]
As the commutator subgroup of $P$ is $N$ the characters of $P$ is of the form $\varepsilon\otimes e^\mu\otimes 1$ and these characters are unitary exactly when $\lambda\in i\RR$.

Let $\varepsilon\in \ZZ/2\ZZ$ and $\mu \in \CC$. For any character $\varepsilon\otimes e^\mu\otimes 1$ of $P$ define the principal series representation $\pi_{\varepsilon,\mu} $induced by it to be the left regular representation of $G$ on 
\[
\Ind_P^G(\varepsilon\otimes e^\mu\otimes 1)=\left\{f\in C^\infty(G)\,\mid f(gman)=|t|_\varepsilon^{-\mu-1}f(g), \,m\in M,\, a\in A,\, n\in N\right\},
\]
where $ma=(\begin{smallmatrix} t & 0 \\ 0 & t^{-1} \end{smallmatrix})\in MA$. We introduce the notation
\[
k_\theta=\begin{pmatrix}
\cos \theta & \sin \theta\\
-\sin\theta & \cos \theta 
\end{pmatrix},
\]
and $\zeta_m(k_\theta)=e^{im\theta}$.
According to the theory of Fourier series we have the $K$-type decomposition 
\[
\Ind_P^G(\varepsilon\otimes e^\mu \otimes 1)\cong \mathop{\widehat{{\bigoplus}}}_{m\in 2\ZZ+\varepsilon}\CC \zeta_m.
\]
We let $\Ind_P^G(\varepsilon\otimes e^\mu\otimes 1)_m$ denote the set of functions contained in the $K$-type given by $m\in \ZZ$, that is $\Ind_P^G(\varepsilon\otimes e^\mu\otimes 1)_m=\CC \zeta_m $. 

A basis of $\frakg$ is given by 
\begin{align*}
    H=\begin{pmatrix}
    1 & 0\\
    0 & -1
\end{pmatrix},\quad E=\begin{pmatrix}
    0 & 1\\
    0 & 0
\end{pmatrix},\quad F=\begin{pmatrix}
    0 & 0\\
    1 & 0
\end{pmatrix}.
\end{align*}
Consider the Casimir operator 
$$\Delta_\mu=d\pi(H)^2+d\pi(E+F)^2-d\pi(E-F)^2, $$
where $\pi=\pi_{\varepsilon,\mu}$.
\begin{proposition}[See e.g. {{\cite[Prop. 10.7]{C20}}}]\label{Eigenvalue Casimir}
For $f\in \Ind_P^G(\varepsilon\otimes e^{\mu}\otimes 1)$ we have
\begin{align*}
    \Delta_\mu f = (\mu^2-1)f.
\end{align*}
\end{proposition}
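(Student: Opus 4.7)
The key observation is that the element $C := H^2 + (E+F)^2 - (E-F)^2 \in \calU(\frakg)$ lies in the centre of $\calU(\frakg)$ (it is, up to a scalar, the Casimir), so $\Delta_\mu = d\pi(C)$ commutes with the $G$-action on $\Ind_P^G(\varepsilon\otimes e^\mu\otimes 1)$. By this equivariance, it suffices to check the scalar identity $(\Delta_\mu f)(e) = (\mu^2-1)f(e)$ at the single point $e\in G$: the identity at an arbitrary $g\in G$ then follows by applying the one at $e$ to the translated section $\pi(g^{-1})f$.

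To evaluate $\Delta_\mu f$ at $e$, I rewrite $C$ in a convenient PBW ordering. Expanding the squares gives $C = H^2 + 2(EF+FE)$, and the bracket relation $[E,F]=H$ permits the substitution $FE = EF - H$, yielding
$$ C = H^2 - 2H + 4\,EF \qquad \text{in } \calU(\frakg). $$
The purpose of this reordering is to move the only generator outside the Lie algebra of $P$, namely $F$, to the right of the $\frakn$-element $E$. A direct differentiation of the right-$P$-equivariance, using $\exp(tH)\in A$ and $\exp(tE)\in N$, yields for every smooth induced section $h$ the identities
$$ d\pi(H)h(e) = (\mu+1)h(e), \qquad d\pi(E)h(e) = 0. $$

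Since $d\pi$ preserves $\Ind_P^G(\varepsilon\otimes e^\mu\otimes 1)$, the sections $d\pi(H)f$ and $d\pi(F)f$ are again in the induced representation. Applying the first identity twice (to $f$, and then to $d\pi(H)f$) and the second to $h = d\pi(F)f$ gives
$$ \Delta_\mu f(e) = (\mu+1)^2 f(e) - 2(\mu+1)f(e) + 4\cdot 0 = (\mu^2-1)f(e), $$
as required. The only piece of insight needed is the choice of PBW ordering that tucks the troublesome generator $F\notin\mathrm{Lie}(P)$ behind a vanishing $d\pi(E)$; once that is arranged the computation is entirely explicit, and I foresee no serious obstacle.
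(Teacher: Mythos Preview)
Your argument is correct: the PBW reordering $C=H^2-2H+4EF$ is right, the evaluations $d\pi(H)h(e)=(\mu+1)h(e)$ and $d\pi(E)h(e)=0$ follow from the right-$P$-equivariance, and centrality of $C$ reduces the identity to the point $e$. The paper does not supply its own proof here---it simply cites \cite[Prop.~10.7]{C20}---so there is nothing to compare against beyond noting that your computation is the standard one.
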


\begin{proposition}[See {{\cite[Prop. 10.8]{C20}}}] \label{Best of casselmand}
The representation $\Ind_P^G(\varepsilon\otimes e^\mu\otimes 1)$ is irreducible except when $\mu\in 1-\varepsilon-2\ZZ$ . If $\mu\in 1-\varepsilon-2\NN$ then $\Ind_P^G(\varepsilon\otimes e^\mu\otimes 1)$ decomposes as $V_0\oplus V_1\oplus V_2$ where $V_0$ is an irreducible representation containing exactly the $K$-types with $|m|\leq -\mu$. The quotient $\pi_{\varepsilon,\mu}^{\text{ds}}$ is a direct sum of two infinite dimensional representations $\pi_{\varepsilon,\mu}^\hol$ and $\pi_{\varepsilon,\mu}^\ahol$. 
\end{proposition}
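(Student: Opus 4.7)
The plan is to exploit the one-dimensionality of each $K$-type and to describe the entire $(\frakg,K)$-module via a pair of scalar raising/lowering operators. First I would introduce a complex $\fraksl_2$-triple inside $\frakg_\CC$ by setting $h = i(E-F)$ and $e_\pm = \tfrac12\bigl(H \mp i(E+F)\bigr)$, which satisfy $[h,e_\pm]=\pm 2e_\pm$ and $[e_+,e_-]=h$. Under $d\pi_{\varepsilon,\mu}$ the element $h$ acts on $\zeta_m$ with eigenvalue $m$, while $e_\pm$ sends $\zeta_m$ to $c_\pm(m,\mu)\zeta_{m\pm 2}$ for scalars $c_\pm(m,\mu)$ which encode the whole representation.

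The next step is to compute these scalars. Rewriting the Casimir from Proposition~\ref{Eigenvalue Casimir} as a polynomial in $h$ and in $e_+ e_-$ and applying it to $\zeta_m$ yields a quadratic relation for the product $c_+(m,\mu)\,c_-(m+2,\mu)$. Solving this relation, and fixing each factor up to a rescaling of the basis vectors $\zeta_m$, produces a factorization whose zeros lie exactly at $m+1 = \pm(\mu+1)$, positioned so that $e_+$ annihilates $\zeta_m$ at $m=-\mu-1$ and $e_-$ annihilates $\zeta_m$ at $m=\mu+1$.

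With the scalars in hand, the structure theorem reduces to a combinatorial question: a $(\frakg,K)$-submodule corresponds to a subset $S\subseteq 2\ZZ+\varepsilon$ whose boundary lies at a vanishing point of the relevant shifting operator. Hence irreducibility holds exactly when no $c_\pm(m,\mu)$ vanishes on $2\ZZ+\varepsilon$, i.e.\ when $\mu+1\notin 2\ZZ+\varepsilon$, or equivalently $\mu\notin 1-\varepsilon-2\ZZ$. When $\mu = 1-\varepsilon-2n$ with $n\in\NN$, the barriers sit symmetrically at $m=\pm(-\mu-1)$, so $V_0 = \spn\{\zeta_m : |m|\leq -\mu\}$ is a finite-dimensional irreducible submodule with the claimed $K$-type support, and the remaining $K$-types split into two one-sided chains $m\geq -\mu+2$ and $m\leq \mu-2$, which I would identify with $\pi_{\varepsilon,\mu}^\hol$ and $\pi_{\varepsilon,\mu}^\ahol$ by matching their extreme $K$-types.

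The main obstacle is bookkeeping: making sure the normalization of $\fraka_\CC^*\cong\CC$ and of the complex $\fraksl_2$-triple produces a zero locus $m+1 = \pm(\mu+1)$ matching the arithmetic condition $|m|\leq -\mu$ in the precise form stated, and that the restriction $\mu\in 1-\varepsilon-2\NN$ (rather than the positive-shift analogue) places $V_0$ as a subrepresentation with the two discrete series appearing as the quotient $\pi_{\varepsilon,\mu}^{\text{ds}}$, rather than the reverse. Irreducibility of $V_0$ and of each infinite subquotient is then automatic, since each set of $K$-types forms a single $\fraksl_2$-chain whose shifting scalars vanish only at its terminal points.
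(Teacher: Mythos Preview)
The paper does not supply its own proof of this proposition; it is quoted verbatim from Casselman's notes \cite[Prop.~10.8]{C20}. Your outline via the complexified $\fraksl_2$-triple and raising/lowering scalars $c_\pm(m,\mu)$ is exactly the standard argument one finds there, and the overall structure is correct.

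One point deserves more care. The Casimir identity only pins down the \emph{product} $c_+(m,\mu)\,c_-(m+2,\mu)$, not the individual factors; rescaling the basis vectors $\zeta_m$ cannot separate them. So from the Casimir alone you learn that the product vanishes at $m+1=\pm(\mu+1)$, but not which of $e_+$ or $e_-$ is responsible at each barrier. This matters: it is precisely what decides whether $V_0$ is a subrepresentation (with the discrete series in the quotient) or the other way around, which you yourself flag as the ``main obstacle.'' The cleanest fix is to compute $d\pi_{\varepsilon,\mu}(e_\pm)\zeta_m$ directly from the Iwasawa decomposition and the equivariance $f(gman)=|t|_\varepsilon^{-\mu-1}f(g)$; one obtains $d\pi(e_\pm)\zeta_m = \tfrac12(\mu+1\pm m)\,\zeta_{m\pm2}$ in the paper's normalization, from which the placement of the zeros and hence the submodule structure is immediate. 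With that single direct computation inserted, your argument goes through.
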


\noindent Let $w_0=(\begin{smallmatrix} 0 & 1\\
-1 & 0
\end{smallmatrix}),$ a representative of the longest Weyl group element of $G$. Recall the definition of the Knapp--Stein intertwining operator 
\begin{align*}
    T_\mu^\varepsilon: \Ind_P^G(\varepsilon\otimes e^\mu \otimes 1)\rightarrow \Ind_P^G(\varepsilon\otimes e^{-\mu} \otimes 1),\quad 
    T_\mu^\varepsilon f(g) = \frac{1}{\Gamma(\frac{\mu+\varepsilon}{2})}\int_{\overline{N}} f(g w_0 \overline{n}) d\overline{n},
\end{align*}
for $\Re{(\mu)}>0$. The normalization is chosen such that $T_\mu^\varepsilon$ extends holomorphically to $\mu\in \CC $.

\begin{proposition} \label{norm resultat}
For $f\in \Ind_P^G(\varepsilon\otimes e^\mu\otimes 1)_m$ we have 
\[
T^\varepsilon_\mu f=b_m^\varepsilon (\mu) f,
\]
where 
\[
b_m^\varepsilon(\mu)=\sqrt{\pi}i^{[\varepsilon]_2} (-1)^{\frac{m+|m|}{2}-[\varepsilon]_2}\frac{\big(\frac{1+\varepsilon-\mu}{2}\big)_{\frac{|m|-\varepsilon}{2}}}{\Gamma\big(\frac{\mu+1+|m|}{2}\big)}.
\]
For $\varepsilon=0$ and $\mu\in 1-2\NN$ we have $b_m^0(\mu)\geq 0$ for all $m\in 2\ZZ$.
Whereas for $\varepsilon=1$, $m$ odd and $\mu\in -2\NN$ we have $-ib_m^1(\mu)\geq 0$ for $m>0$ and $ib_m^1(\mu)\geq 0$ for $m<0$.
\end{proposition}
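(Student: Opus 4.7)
The plan is the standard two-step approach---reduce to a scalar integral via the intertwining property, then evaluate it by a classical Beta-type formula. Because $T_\mu^\varepsilon$ is $G$-equivariant and each $K$-isotype $\CC\zeta_m$ is one-dimensional, Schur's lemma gives $T_\mu^\varepsilon\zeta_m=b_m^\varepsilon(\mu)\zeta_m$. Evaluating at the identity with the parametrization $\overline{n}_x=\bigl(\begin{smallmatrix}1&0\\x&1\end{smallmatrix}\bigr)$ of $\overline{N}$ yields
$$b_m^\varepsilon(\mu)=\frac{1}{\Gamma(\tfrac{\mu+\varepsilon}{2})}\int_\RR\zeta_m(w_0\overline{n}_x)\,dx.$$
A direct matrix computation shows $w_0\overline{n}_x=\bigl(\begin{smallmatrix}x&1\\-1&0\end{smallmatrix}\bigr)=k_{\theta(x)}a(x)n(x)$ with $a(x)=\diag(\sqrt{1+x^2},\,1/\sqrt{1+x^2})$ (diagonal entries positive, so the $M$-component is trivial) and $e^{i\theta(x)}=(x+i)/\sqrt{1+x^2}$. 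The induction rule then gives, uniformly for $m\in\ZZ$, $\zeta_m(w_0\overline{n}_x)=e^{im\theta(x)}(1+x^2)^{-(\mu+1)/2}$.

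The substitution $x=\cot\phi$, $\phi\in(0,\pi)$, converts the integral to the classical formula
$$\int_0^\pi e^{im\phi}\sin^{\mu-1}\phi\,d\phi=\frac{\pi e^{im\pi/2}\Gamma(\mu)}{2^{\mu-1}\Gamma(\tfrac{\mu+m+1}{2})\Gamma(\tfrac{\mu-m+1}{2})}\qquad(\Re\mu>0).$$
After Legendre duplication $\Gamma(\mu)=\pi^{-1/2}2^{\mu-1}\Gamma(\mu/2)\Gamma(\tfrac{\mu+1}{2})$, one of $\Gamma(\mu/2),\,\Gamma(\tfrac{\mu+1}{2})$ cancels $\Gamma(\tfrac{\mu+\varepsilon}{2})$ according to the parity of $\varepsilon$, leaving $\Gamma(\tfrac{\mu+1-\varepsilon}{2})$ on top. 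I then apply the reflection formula $\Gamma(z)\Gamma(1-z)=\pi/\sin(\pi z)$ to the Gamma with $-|m|$, converting it to $\Gamma(\tfrac{|m|-\mu+1}{2})$ times an explicit cosine. The parity constraint $m\equiv\varepsilon\pmod 2$ collapses the cosine to $(-1)^{(|m|-\varepsilon)/2}$, the remaining Gamma-quotient becomes the Pochhammer symbol $\bigl(\tfrac{1+\varepsilon-\mu}{2}\bigr)_{(|m|-\varepsilon)/2}$, and combining with the phase $e^{im\pi/2}$ produces $i^{[\varepsilon]_2}(-1)^{(m+|m|)/2-[\varepsilon]_2}$. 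Holomorphicity of $T_\mu^\varepsilon$ extends the resulting identity from $\Re\mu>0$ to all $\mu\in\CC$.

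For the sign assertions I substitute the discrete values. For $\varepsilon=0$, $\mu=1-2k$ and $|m|=2j$ the formula reduces to $\sqrt{\pi}(k)_j/\Gamma(j-k+1)\geq 0$ (vanishing when $j<k$ because of the pole of $\Gamma$). For $\varepsilon=1$, $\mu=-2k$ and $|m|=2j+1$ it becomes $\sqrt{\pi}(k+1)_j/\Gamma(j-k+1)$ times $i\cdot(-1)^{(m+|m|)/2-1}$, equal to $+i$ for $m>0$ and $-i$ for $m<0$; hence $\mp ib_m^1(\mu)\geq 0$ as claimed. The hard part throughout is the Gamma-function bookkeeping in the middle step: the integral naturally produces the symmetric pair $\Gamma(\tfrac{\mu\pm m+1}{2})$, while the target breaks this symmetry in favour of $|m|$ and a Pochhammer, and the phase $e^{im\pi/2}$ must combine with the $(-1)^{(|m|-\varepsilon)/2}$ from reflection to give exactly the claimed sign---something that only closes cleanly thanks to $m\equiv\varepsilon\pmod 2$.
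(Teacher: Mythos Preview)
Your proof is correct and follows essentially the same route as the paper: reduce to a scalar via the $K$-intertwining property, compute the Iwasawa decomposition of $w_0\overline{n}_x$, and evaluate the resulting Beta-type integral. The only cosmetic difference is that the paper keeps the integral in the form $\int_\RR (x+i)^{\frac{m-\mu-1}{2}}(x-i)^{\frac{-m-\mu-1}{2}}\,dx$ and invokes Lemma~\ref{casselman integral}, whereas you substitute $x=\cot\phi$ and use Lemma~\ref{appendix trig integral}; these two formulas are related by exactly that substitution. You also carry out the sign checks for $b_m^\varepsilon(\mu)$ at the discrete parameters, which the paper leaves to the reader.
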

\begin{proof}
As $T^\varepsilon_\mu$ maps $K$-types to $K$-types we have $T_\mu^\varepsilon f=T_\mu^\varepsilon f(e) f$. Now decompose 
\[
w_0\overline{n}_x=kan=\frac{1}{\sqrt{1+x^2}}
\begin{pmatrix}
x & 1\\
-1 & x
\end{pmatrix}
\begin{pmatrix}
\sqrt{x^2+1} & 0\\
0 & \frac{1}{\sqrt{1+x^2}}
\end{pmatrix}
\begin{pmatrix}
1 & \frac{x}{x^2+1}\\
0 & 1
\end{pmatrix},
\]
then applying $f$'s equivariance properties, we arrive at 
\[
\int_\RR f(w_0\overline{n}_x)\,dx=\int_\RR (x+i)^\frac{m-\mu-1}{2}(x-i)^\frac{-m-\mu-1}{2}\,dx=\frac{2^{1-\mu}\pi i^m\Gamma(\mu)}{\Gamma\big(\frac{\mu-|m|+1}{2}\big)\Gamma\big(\frac{\mu+|m|+1}{2}\big)},
\]
where in the last equality we used Lemma \ref{casselman integral}. Now dividing by $\Gamma(\frac{\mu+\varepsilon}{2})$ and shuffling around Gamma-factors we arrive at the result. 
\end{proof}

\noindent For $\mu\in i\RR $ we equip the space $\Ind_P^G(\varepsilon\otimes e^\mu\otimes 1)$ with the usual $L^2$-norm. 
Using Proposition \ref{norm resultat} we can for $\varepsilon=0$ and $\mu\in 1-2\NN$ equip $\Ind_P^G(0\otimes e^\mu\otimes 1)$ with the norm 
\[
\| f\|^2=\int_K f(k) \overline{T_\mu^0 f(k)}\,dk.
\]
Similarly for $\varepsilon=1$ and $\mu\in -2\NN$ we can equip $\Ind_P^G(1\otimes e^\mu\otimes 1)$ with the norm 
\[
\| f\|^2=\int_K f(k) \overline{\hat{T}_\mu^1 f(k)}\,dk
\]
where 
\[
\hat{T}_\mu^1 f=\begin{cases}
iT_\mu^1 f,& \text{for }m>0,\\
-iT_\mu^1 f,& \text{for }m<0\\
\end{cases}
\]
for $f\in \Ind_P^G(1\otimes e^\mu\otimes 1)_m$. The operator $\hat{T}_\mu^1$ is still an intertwining operator as it vanishes on $V_0$ per Proposition \ref{norm resultat} and thus we just altered it by a scalar on each of the summands in Proposition \ref{Best of casselmand}.

\section{The homogeneous space \texorpdfstring{$G/H$}{G/H}}
\noindent For a unitary character $\chi_{\varepsilon,\lambda}=\varepsilon\otimes e^\lambda$ with $\lambda\in i\RR$ the left-regular action $\tau_{\varepsilon,\lambda}$ of $G$ on the space of $L^2$-sections associated to the line bundle $G\times _H \CC_{\varepsilon,\lambda}\rightarrow G/H$, given by
\[
\Ind_H^G(\varepsilon\otimes e^\lambda)=
\left\{f: G\rightarrow \CC,\textit{ measurable}\mid f(gh)=\chi_{\varepsilon,\lambda}(h)^{-1}f(g),\, \int_{G/H}|f(g)|^2 d(gH)<\infty\right\},
\]
defines a unitary representation of $G$. The goal of this paper is to decompose this space. Furthermore we consider the subspace of compactly supported smooth functions
\[
C^\infty_c\text{-}\Ind_H^G(\varepsilon\otimes e^\lambda)=\left\{f\in C^\infty(G)\cap \Ind_H^G(\varepsilon\otimes e^\lambda)\mid \text{supp}(f)\subseteq\Omega,\,\Omega H\text{ is compact in }G/H \right\}.
\]
We will denote the smooth vectors in $\Ind_H^G(\varepsilon\otimes e^\lambda)$ by $\Ind_H^G(\varepsilon\otimes e^\lambda)^\infty$.
We introduce the notation 
$$b_u=\begin{pmatrix}
\cosh u & \sinh u\\
\sinh u & \cosh u 
\end{pmatrix},\quad
\overline{n}_x=\begin{pmatrix}
1 & 0 \\
x & 1
\end{pmatrix}.$$
Using the decomposition $G=KBA$, where $B=\{b_u\,|\,u\in \RR\},$ we consider $G/H$ in the global coordinates $(\theta,u)\in [0,\pi)\times \RR$ where $xH=k_\theta b_u H$ and the invariant measure is $d(xH)=\cosh(2u) dud\theta$, see e.g. \cite{M84}. 
Now in terms of these coordinates we have the $K$-type decomposition
\begin{align}\label{K-types}
    C^\infty_c\text{-}\Ind_H^G(\varepsilon\otimes e^\lambda)= \mathop{\widehat{\bigoplus}}_{m\in 2\ZZ+\varepsilon}\CC \zeta_m \otimes C_c^\infty(\RR)
\end{align} 
with $\zeta_m(k_\theta)=e^{im\theta}$.
We let $\Ind_H^G(\varepsilon\otimes e^\lambda)_m$ denote the set of functions contained in the $K$-type given by $m\in 2\ZZ+\varepsilon$, that is $\Ind_H^G(\varepsilon\otimes e^\lambda)_m=\CC \zeta_m \otimes C_c^\infty(\RR)$.

We denote by $\Delta_\lambda$ the Casimir operator for the representation $\Ind_H^G(\varepsilon\otimes e^\lambda)$ defined in a similar fashion as for the principal series. 
\begin{proposition} \label{Casimir}
Written in the coordinates $(\theta, u)$ the Casimir operator $\Delta_\lambda$ is given by

\begin{align*}
    \Delta_\lambda=\frac{\lambda^2}{\cosh^2(2u)}+2\lambda\frac{\tanh(2u)}{\cosh(2u)}\partial_\theta + 2\tanh(2u)\partial_u-\frac{1}{\cosh^2(2u)}\partial_\theta^2+\partial_u^2.
\end{align*}
\end{proposition}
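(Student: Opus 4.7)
The strategy is to compute each of the three operators $d\tau(H)$, $d\tau(E+F)$, $d\tau(E-F)$ (with $\tau = \tau_{\varepsilon,\lambda}$) as an explicit first-order differential operator in the coordinates $(\theta, u)$ on $G/H$, and then assemble $\Delta_\lambda = d\tau(H)^2 + d\tau(E+F)^2 - d\tau(E-F)^2$.

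\emph{Step 1.} Since $k_\theta = \exp(\theta(E-F))$ and $K$ is abelian, we have $\exp(-t(E-F))\,k_\theta b_u = k_{\theta-t}\,b_u$, so $d\tau(E-F) = -\partial_\theta$ and $d\tau(E-F)^2 = \partial_\theta^2$.

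\emph{Step 2.} For $X\in\{H,E+F\}$ I write the curve $\exp(-tX)\,k_\theta b_u = k_{\theta(t)}\,b_{u(t)}\,h(t)$ with $h(0)=e$ and $h(t)=\exp(t\alpha H)+O(t^2)$. Differentiating at $t=0$ and multiplying on the right by $g^{-1}$ gives the matrix identity
\[
-X \;=\; \dot\theta\,(E-F) \;+\; \dot u\,\Ad(k_\theta)(E+F) \;+\; \alpha\,\Ad(k_\theta b_u)(H).
\]
A direct computation using $[E-F,E+F]=2H$ and $[E+F,H]=-2(E-F)$ yields
\[
\Ad(k_\theta)(E+F) \;=\; \cos(2\theta)(E+F)+\sin(2\theta)H,
\]
\[
\Ad(k_\theta b_u)(H) \;=\; \cosh(2u)\cos(2\theta)H-\cosh(2u)\sin(2\theta)(E+F)-\sinh(2u)(E-F).
\]
Solving the resulting $3\times 3$ linear system for $(\dot\theta,\dot u,\alpha)$ and using the equivariance $f(g\,h(t))=e^{-\lambda\alpha t}f(g)+O(t^2)$ in the derivative $\frac{d}{dt}\big|_{t=0}f(\exp(-tX)k_\theta b_u)$, I obtain closed-form expressions for $d\tau(H)$ and $d\tau(E+F)$ whose coefficients are built from $\sin(2\theta),\cos(2\theta),\tanh(2u),\cosh(2u)^{-1}$.

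\emph{Step 3.} Set $L_1=d\tau(H)$, $L_2=d\tau(E+F)$. A direct inspection of the formulas from Step 2 reveals the clean factorization
\[
L_1\pm iL_2 \;=\; e^{\mp 2i\theta}\bigl(-\tanh(2u)\partial_\theta\mp i\partial_u+\tfrac{\lambda}{\cosh(2u)}\bigr).
\]
I then compute $L_1^2+L_2^2$ via the symmetric combination
\[
L_1^2+L_2^2 \;=\; \tfrac12\bigl\{(L_1+iL_2)(L_1-iL_2)+(L_1-iL_2)(L_1+iL_2)\bigr\},
\]
which bypasses the cross term $[L_1,L_2]$. The only subtlety is that multiplication by $e^{\pm 2i\theta}$ does not commute with $\partial_\theta$; when the inner $e^{\pm 2i\theta}$ is moved past the outer differential operator it contributes $\mp 2i\tanh(2u)$, and these combine to produce the $2\tanh(2u)\partial_u$ piece of the final answer. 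All $\theta$-dependence cancels, and after using $\tanh^2(2u)-1=-\cosh^{-2}(2u)$ and subtracting $d\tau(E-F)^2=\partial_\theta^2$ the claimed formula for $\Delta_\lambda$ drops out.

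\emph{Main obstacle.} The hard part is bookkeeping: the two $\Ad$-formulas in Step 2 and the non-commutativity in Step 3 each generate a number of intermediate terms that must either cancel or conspire to give the coefficients in the final expression. Once the factorization $L_1\pm iL_2=e^{\mp 2i\theta}(\cdots)$ is observed, however, the computation collapses and no ingredient beyond careful algebra is needed.
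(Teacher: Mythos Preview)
Your proposal is correct and is exactly the kind of direct computation the paper has in mind: the paper's own proof consists of the single sentence ``This is a standard computation,'' so your Steps~1--3 simply supply the details that the paper omits. The factorization $L_1\pm iL_2=e^{\mp 2i\theta}(\cdots)$ is a nice organizational device, but it is not a genuinely different route---it is just an efficient way to carry out the same bookkeeping.
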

\begin{proof}
This is a standard computation.
\end{proof}

Another set of coordinates can be obtained by using the Iwasawa decomposition $G=KA\overline{N}$ with $(\theta,y)\in [0,\pi)\times\RR$ where $xH=k_\theta \overline{n}_yH$. The invariant measure is given by $d(xH)=\frac{1}{2}dyd\theta$ see \cite[Chap. 5, §6]{knapp2016representation}.

\section{Constructing an isomorphism}

\noindent The goal of this section is to construct the explicit isomorphism in the following theorem, of which the proof was in large presented to us by Jan Frahm.
\begin{theorem}\label{indre Ind}
For  $\nu,\lambda\in i\RR$ the map 
\[
\ankh_\lambda^\nu:\Ind_H^G(\varepsilon\otimes e^\lambda)\rightarrow \Ind_H^G(\varepsilon\otimes e^\nu)
\]
given by
\[
\ankh_\lambda^\nu f(g)=\frac{1}{\sqrt{\pi}2^{\frac{\lambda-\nu}{4}}{}}\frac{\Gamma\left(\frac{2+\nu-\lambda}{4}\right)}{\Gamma \left (\frac{\lambda-\nu}{4}\right)}\int_\RR |x|^{ \frac{\lambda-\nu}{2}-1} f(g\overline{n}_x)\,dx
\]
defines a unitary isomorphism intertwining $\Ind_H^G(\varepsilon\otimes e^\lambda)$ and $\Ind_H^G(\varepsilon\otimes e^\nu)$.
\end{theorem}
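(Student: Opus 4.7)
The plan is to reduce the theorem to classical Fourier analysis on $\RR$ by passing to the $K\overline N$ coordinates $xH = k_\theta \overline n_y H$ mentioned after Proposition \ref{Casimir}, in which the $L^2$-norm reads
\[
\|f\|^2 = \tfrac{1}{2}\int_0^\pi\!\int_\RR |f(k_\theta \overline n_y)|^2\,dy\,d\theta
\]
and is manifestly independent of $\lambda$. Since $\overline N$ is abelian and $\overline n_y\overline n_x = \overline n_{y+x}$, the operator $\ankh_\lambda^\nu$ acts fiberwise in $\theta$ as convolution on $\RR$ against $C_{\lambda,\nu}|x|^{(\lambda-\nu)/2-1}$, where $C_{\lambda,\nu}$ is the Gamma-factor prefactor appearing in the statement. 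The goal is then to show this convolution is unitary on $L^2(\RR, dy)$ via Plancherel and integrate in $\theta$.

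Before the Fourier argument I would dispatch the formal properties. The intertwining identity $\ankh_\lambda^\nu\circ\tau_{\varepsilon,\lambda}(g)=\tau_{\varepsilon,\nu}(g)\circ\ankh_\lambda^\nu$ is immediate from left-invariance of the integral. For the right $H$-equivariance, write $h = \diag(t,t^{-1})$ and use the conjugation rule $h\overline n_x h^{-1} = \overline n_{t^{-2}x}$ together with the substitution $y = t^{-2}x$: the excess $|t|$-factor produced by the kernel $|x|^{(\lambda-\nu)/2-1}$ and the measure $dx$ combines with the $\chi_{\varepsilon,\lambda}$-equivariance of $f$ to yield exactly $\chi_{\varepsilon,\nu}(h)^{-1}$.

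The core computation is the Fourier multiplier. With $s := (\lambda-\nu)/2$, the classical tempered-distribution identity
\[
\calF\bigl(|x|^{s-1}\bigr)(\xi) = 2^s\sqrt{\pi}\,\frac{\Gamma(s/2)}{\Gamma((1-s)/2)}\,|\xi|^{-s}
\]
together with the explicit value of $C_{\lambda,\nu}$ should make all Gamma-factors cancel, leaving the multiplier $2^{(\lambda-\nu)/4}\,|\xi|^{(\nu-\lambda)/2}$. For $\lambda,\nu\in i\RR$ both factors have modulus $1$, so Plancherel gives fiberwise unitarity on $L^2(\RR, dy)$, and nonvanishing of the multiplier gives surjectivity; integrating in $\theta$ upgrades this to unitarity of $\ankh_\lambda^\nu$.

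The technical point I expect to be the main obstacle is that the defining integral converges absolutely only for $\Re(\lambda-\nu)>0$, whereas the theorem is stated on the imaginary axis. I would therefore first prove the identity on the dense subspace $C^\infty_c\text{-}\Ind_H^G(\varepsilon\otimes e^\lambda)$ for $\Re(\lambda-\nu)$ large, where every integral is absolutely convergent, and then extend to $\lambda,\nu\in i\RR$ by holomorphic continuation in $\lambda-\nu$, using that $|x|^{s-1}/\Gamma(s/2)$ is an entire family of tempered distributions, together with an approximation argument and standard continuity of the Fourier transform. The accompanying bookkeeping of Gamma-factors, Fourier-transform normalizations, and the justification that Fubini and Plancherel commute with the outer $\theta$-integral is the most error-prone step.
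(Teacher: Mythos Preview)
Your argument is correct and lands on the same core computation as the paper: in the $\overline N$-direction the operator is convolution with $C_{\lambda,\nu}|x|^{(\lambda-\nu)/2-1}$, whose Fourier transform (Lemma~\ref{convolution constant}) is a multiplier of modulus one when $\lambda,\nu\in i\RR$, so Plancherel gives unitarity and the nonvanishing of the symbol gives surjectivity.

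The paper's packaging differs slightly and is worth noting. Rather than working directly in the $K\overline N$-coordinates, the paper invokes induction in stages (Lemma~\ref{folklore}) to reduce to finding an intertwiner between $\Ind_{MA}^{\overline P}(\varepsilon\otimes e^\lambda)$ and $\Ind_{MA}^{\overline P}(\varepsilon\otimes e^\nu)$, which after restriction to $\overline N$ becomes an operator on $L^2(\RR)$. It then argues abstractly: any such intertwiner commutes with $\overline N$-translation, hence is convolution with some tempered $u$; compatibility with the $A$-action forces $u$ to be homogeneous of degree $(\lambda-\nu-2)/2$, so $u=|x|^{(\lambda-\nu-2)/2}_\delta$ with $\delta\in\{0,1\}$. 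This \emph{derives} the shape of the operator rather than verifying a given formula, and as a byproduct exhibits the second (odd) intertwiner that you do not see. Your direct coordinate verification is shorter and entirely adequate for the theorem as stated; the paper's route buys the classification of all intertwiners of this type.
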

\noindent To this extent we consider the minimal parabolic subgroup $\overline{P}=\overline{N}AM\subset G$ and let
$$\Ind_{MA}^P(\varepsilon\otimes e^\lambda)=\left\{f:\overline{N}AM\to \CC \, \vert \, f(gma)=\sgn(m)^\varepsilon a^{-\lambda-1} f(g),\, \& \,\int_{P/MA}|f(g)|^2\,dg<\infty \right \}, $$
where $a^{\lambda}:=e^{\lambda(X)}$ for $a=e^X$ and $\lambda\in \fraka_\CC^*\cong \CC$.
\begin{lemma}[Induction in stages]\label{folklore}
$$\Ind _{MA}^G(\varepsilon\otimes e^\lambda)\simeq \Ind_{MA\overline{N}}^G(\Ind_{MA}^{MA\overline{N}}(\varepsilon\otimes e^\lambda)), $$
where the map is given by $f\mapsto F$ where $F(g)(\overline{p})=f(g\overline{p})$ and thus the inverse is given by $f(g)=F(g)(1)$. 
\end{lemma}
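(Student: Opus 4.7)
The plan is to verify that $f \mapsto F$ and its proposed inverse $\Phi: F \mapsto F(\cdot)(1)$ are mutually inverse $G$-equivariant bijections that preserve the $L^2$ norms — i.e. the standard induction-in-stages identity applied to the tower $MA \subset MA\overline{N} \subset G$.

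First I would check well-definedness of $f \mapsto F$. The inner equivariance, $F(g)(\overline{p}\,ma) = f(g\overline{p}\,ma) = \chi_{\varepsilon,\lambda}(ma)^{-1}F(g)(\overline{p})$, places $F(g) \in \Ind_{MA}^{MA\overline{N}}(\varepsilon\otimes e^\lambda)$. Writing $\sigma$ for the left-regular action of $\overline{P}:= MA\overline{N}$ on this space, the outer equivariance $F(g\overline{p}_0) = \sigma(\overline{p}_0)^{-1}F(g)$ follows from $F(g\overline{p}_0)(\overline{p}) = f(g\overline{p}_0\overline{p}) = F(g)(\overline{p}_0\overline{p})$. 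Symmetrically, for $ma\in MA$, the identity $\Phi(F)(gma) = (\sigma(ma)^{-1}F(g))(1) = F(g)(ma) = \chi_{\varepsilon,\lambda}(ma)^{-1}\Phi(F)(g)$ shows $\Phi(F) \in \Ind_{MA}^G(\varepsilon\otimes e^\lambda)$. That the two assignments are mutually inverse is then immediate from the definitions, and both intertwine the left-translation $G$-action since neither construction shifts the variable $g$ on the left.

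The main obstacle is norm preservation. Unwinding the norm on the iterated induction gives
\[
\|F\|^2 = \int_{G/\overline{P}} \int_{\overline{N}} |f(g\overline{n})|^2\,d\overline{n}\,d(g\overline{P}),
\]
after identifying $\overline{P}/MA \cong \overline{N}$ with its Haar measure. To match this with $\|f\|^2 = \int_{G/MA}|f(g)|^2\,d(gMA)$, I would invoke the Iwasawa decomposition $G = K\overline{N}A$, which produces bijections $G/MA \cong (K/M)\times \overline{N}$ and $G/\overline{P} \cong K/M$. In the $(\theta, y)$ coordinates recorded in Section~2, where $d(gMA) = \tfrac{1}{2}\,dy\,d\theta$ factors as the product of the invariant measure on $K/M$ and Haar measure on $\overline{N}$, the two double integrals visibly coincide (up to matching normalization constants), establishing the desired unitary isomorphism.
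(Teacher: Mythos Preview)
Your proposal is correct. The paper does not actually prove this lemma; its entire proof reads ``See e.g.\ \cite[Chapter VI, section 9]{gaal2012linear}'', deferring to a standard reference for induction in stages. Your direct verification of well-definedness, bijectivity, $G$-equivariance, and unitarity via the Iwasawa factorisation $G=K\overline{N}A$ is therefore strictly more detailed than what the paper provides, and follows exactly the classical argument that the cited reference records.
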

\begin{proof}
See e.g. \cite[Chapter VI, section 9]{gaal2012linear} 
\end{proof}

\begin{proof}[Proof of Theorem \ref{indre Ind}]
We first show the isomorphism claim of Theorem \ref{indre Ind}. By Lemma \ref{folklore} it suffices to show that $\Ind_{MA}^{\overline{P}}(\varepsilon\otimes e^\lambda)\simeq \Ind_{MA}^{\overline{P}}(\varepsilon\otimes e^\nu)$.
Let $\text{rest}_{\overline{N}}:\Ind_{MA}^{\overline{P}}(\varepsilon\otimes e^\lambda)\to L^2(\overline{N})$ be the restriction from $\overline{P}$ to $\overline{N}$. We let $\Phi$ be the inverse map which is given by $\Phi F(\overline{n}am)=\sgn(m)^\varepsilon a^{-\lambda-1} F(\overline{n})$. Let $\pi_{\varepsilon,\lambda}$ be the left regular representation on $\Ind_{MA}^{\overline{P}}(\varepsilon\otimes e^\lambda)$ and define $\tilde{\pi}_{\varepsilon,\lambda}(g)=\text{rest}_{\overline{N}}\circ \pi_{\varepsilon,\lambda}(g) \circ \Phi$. Then $\overline{P}$ acts on $L^2(\overline{N})$ via $\tilde{\pi}_{\varepsilon,\lambda}$, and the above statement reduces to showing that $\tilde{\pi}_{\varepsilon,\lambda}\cong \tilde{\pi}_{\varepsilon,\nu}$ for $\lambda,\nu \in i\RR$.

To construct an isomorphism $H:L^2(\RR)\rightarrow L^2(\RR)$ intertwining $\tilde{\pi}_{\varepsilon,\lambda}$ and $\tilde{\pi}_{\varepsilon,\nu}$ we note that the action of $P=\overline{N}AM$ on $f\in L^2(\overline{N})$ is given by  
\begin{align*}
    &\tilde{\pi}_{\varepsilon,\lambda}(\overline{n})f(\overline{n}')=f(\overline{n}^{-1} \overline{n}')\qquad\qquad\qquad\qquad\qquad\quad \overline{n},\overline{n}'\in \overline{N},\\
    &\tilde{\pi}_{\varepsilon,\lambda}(ma)f(\overline{n})=\sgn(m)^\varepsilon a^{\lambda+1} f\big((ma)^{-1}\overline{n}(ma)\big),\quad m\in M,\, a\in A,\, \overline{n}\in \overline{N}.
\end{align*}
Identifying $\overline{N}\simeq \RR $, $M\simeq \{\pm 1\}$ and $A\simeq \RR_{>0}$, the above becomes
\begin{align*}
    &\tilde{\pi}_{\varepsilon,\lambda}(y)f(x)=f(x-y),\qquad x,y\in \RR,\\
    &\tilde{\pi}_{\varepsilon,\lambda}(t)f(x)=t^{\lambda+1}f(t^2x),\quad x\in \RR,\ t\in \RR_{>0}.
\end{align*}
 Since $\overline{N}$ acts by translation any such intertwining operator $H$ must be a translation invariant operator on $L^2(\RR)$, hence there exists some tempered distribution $u \in \mathcal{S}'(\RR)$ such that $H$ is given by convolution with $u$, that is $HF(x)=\langle u, \tau_x \check{F}\rangle$, where $\tau_x F(y)=F(y-x)$ and $\check{F}(x)=F(-x)$. 
Furthermore let $d_tf$ denote the dilation of $f$ by $t\in \RR\setminus\{0\}$, i.e.  $d_tf(x)=f(tx)$, then 
\begin{align*}
    H\circ \tilde{\pi}_{\varepsilon,\lambda}(t)F(x) = \tilde{\pi}_{\varepsilon,\nu}(t)\circ H F(x).
\end{align*}
Evaluating at $x=0$ then yields $\langle d_{t^{-2}} u,\check{F}\rangle = t^{\nu-\lambda+2}\langle u ,\check{F}\rangle$. Hence $u$ is a homogeneous distribution of degree $\frac{\lambda-\nu-2}{2}$ and we conclude that
\[
Hf=f*|x|^{\frac{\lambda-\nu-2}{2}}_\delta \quad \text{ for some } \delta\in \{0,1\}.
\]
Comparing with Lemma \ref{convolution constant} we see that these are both necessary and sufficient conditions for $H$ to establish an isomorphism between $\Ind_{MA}^P(\varepsilon\otimes e^\lambda)$ and $ \Ind_{MA}^P(\varepsilon\otimes e^\nu)$. Putting $\delta = 0$, composing with the map from Lemma \ref{folklore} then yields the desired isomorphism. To see that the normalization indeed makes $\ankh_\lambda^\nu$ unitary, it suffices to note that for $\lambda,\nu \in i\RR$ Lemma \ref{convolution constant} gives

\[\left|\frac{1}{\sqrt{\pi}2^{\frac{\lambda-\nu}{4}}{}}\frac{\Gamma\left(\frac{2+\nu-\lambda}{4}\right)}{\Gamma \left (\frac{\lambda-\nu}{4}\right)}\mathcal{F}(|x|^{\frac{\lambda-\nu}{2}-1})\right|=1.
\]
\end{proof}

\section{Eigenfunctions for the Casimir operator}

\noindent By Theorem \ref{indre Ind} a Plancherel formula on $\Ind_H^G(\varepsilon \otimes e^\lambda)$ for some fixed $\lambda\in i\RR$ can be extended to all $\nu\in  i\RR$ by compositon with the unitary isomorphism $\ankh^\nu_\lambda$ from Theorem \ref{indre Ind}. Following this we will therefore mostly consider the cases of which $\lambda =0$, which often simplifies matters considerably.

For $f\in \Ind_H^G(\varepsilon\otimes e^0)_m$ with $f(k_\theta b_u)=e^{im\theta}\cdot h(u),\ h\in C_c^\infty(\RR)$ we have 
\begin{align*}
    \Delta_0 f = e^{im\theta}\tilde{\Delta}_{m} h(u),
\end{align*}
for some differential operator $\tilde{\Delta}_{m}$. 
\begin{lemma}\label{Casimir on m part}
Let $h\in C_c^\infty(\RR)$ and $m\in \ZZ$. Then we have
\begin{equation*}
    \tilde{\Delta}_m\cosh^{\frac{m}{2}}(2u)h\big(-\sinh^2(2u)\big) = \cosh^{\frac{ m}{2}}(2u)\square_{ m} h\big(-\sinh^2(2u)\big)\label{Casimir diff operator}.
\end{equation*}
For $t=-\sinh^2(2u)$ the operator $\square_{m}$ is given by
\begin{align*}
    \square_{m} = m(m+2)+8(-1+\left(3+ m\right)t)\frac{d}{dt}-16t\cdot(1-t)\frac{d^2}{dt^2}.\label{hypergeo Casimir}
\end{align*}
\end{lemma}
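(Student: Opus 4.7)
The plan is a direct calculation using Proposition~\ref{Casimir} at $\lambda=0$. Setting $\lambda=0$ and inserting $f(k_\theta b_u)=e^{im\theta}h(u)$ into the Casimir operator kills the mixed $\partial_\theta$ term, replaces $\partial_\theta^2$ by $-m^2$, and produces the ordinary differential operator
\begin{equation*}
\tilde{\Delta}_m = \partial_u^2 + 2\tanh(2u)\,\partial_u + \frac{m^2}{\cosh^2(2u)}.
\end{equation*}
The task is then to show that conjugation of $\tilde{\Delta}_m$ by the multiplication operator $\cosh^{m/2}(2u)$ combined with the change of variables $t=-\sinh^2(2u)$ transforms $\tilde{\Delta}_m$ into the differential operator $\square_m$ in the variable $t$.

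First I would perform the change of variables. Using $\tfrac{dt}{du}=-4\sinh(2u)\cosh(2u)$ together with $\sinh^2(2u)=-t$ and $\cosh^2(2u)=1-t$, one computes by the chain rule
\begin{align*}
\partial_u &= -4\sinh(2u)\cosh(2u)\,\tfrac{d}{dt},\\
\partial_u^2 &= -4(1-2t)\,\tfrac{d}{dt}-16t(1-t)\,\tfrac{d^2}{dt^2}.
\end{align*}
Second, I would compute the action of $\tilde{\Delta}_m$ on a product $H(u)=\cosh^{m/2}(2u)\,h(t)$ by applying the Leibniz rule. Writing $c=\cosh(2u)$ and $S=\sinh(2u)$, the first derivative is $\partial_u H=c^{m/2-1}S\bigl[(m/2)h-2(1-t)h'\bigr]$, and the second derivative is obtained by differentiating this expression once more and rewriting powers of $S^2$ in terms of $t$.

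Third, I would assemble all contributions to $\tilde{\Delta}_m H$ and factor out $c^{m/2}$. The delicate point, and the main place to be careful, is that the constant-in-$h$ term produced by the second derivative contains a factor of $\cosh^{m/2-2}(2u)=c^{m/2}/(1-t)$, which combines with the potential term $m^2/\cosh^2(2u)\cdot c^{m/2} h$ to give an apparent singularity at $t=1$. After collecting the coefficients of $h$, $h'$, and $h''$, these singular $\tfrac{1}{1-t}$ contributions must cancel: indeed, the coefficient of $h$ arising from the product-rule terms is $\tfrac{-m(m-2)-2mt+m^2}{1-t}=2m$, which leaves the polynomial constant $m(m+2)$. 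The coefficients of $h'$ reduce to $8(-1+(m+3)t)$, and the coefficient of $h''$ is manifestly $-16t(1-t)$, matching the stated $\square_m$. The whole argument is therefore a careful but entirely mechanical chain-rule and Leibniz computation, with the only non-routine observation being that conjugation by $\cosh^{m/2}(2u)$ is precisely what removes the $1/(1-t)$-singularity so that the resulting operator is polynomial in $t$, i.e.\ of hypergeometric type.
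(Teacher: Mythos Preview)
Your approach is exactly the one the paper has in mind: the paper's proof of this lemma is the single line ``Follows directly from Proposition~\ref{Casimir}'', and what you outline is precisely that direct chain-rule/Leibniz computation.

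A word of caution on the intermediate arithmetic, though. Two of your displayed formulas are off by a factor of $2$: from $t=-\sinh^2(2u)$ one has $\tfrac{dt}{du}=-4\sinh(2u)\cosh(2u)$ and hence
\[
\partial_u^2=-8(1-2t)\,\tfrac{d}{dt}-16t(1-t)\,\tfrac{d^2}{dt^2},
\]
not $-4(1-2t)\tfrac{d}{dt}$; likewise $\partial_u H=c^{m/2-1}S\bigl[mh-4(1-t)h'\bigr]$, not $(m/2)h-2(1-t)h'$. With the correct constants the coefficient of $h$ after factoring out $c^{m/2}$ is
\[
\frac{-m(m-2)t-2mt+m^2}{1-t}+2m=m^2+2m=m(m+2),
\]
and the coefficients of $h'$ and $h''$ come out as stated. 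So your plan and your final identifications are right, but if you carry the computation through you will want to fix these intermediate expressions.
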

\begin{proof}
Follows directly from Proposition \ref{Casimir}.
\end{proof}
\noindent Recall that a hypergeometric differential equation has the form
\[
t(1-t)\frac{d^2}{dt^2}+[c-(a+b+1)t]\frac{d}{dt}-ab=0.
\]
If $c$ is not a non-positive integer there are two independent solutions (around $t=0$) 
\[
\ _2F_1(a,b;c;t)\quad\text{and}\quad t^{1-c}\ _2F_1(1+a-c,1+b-c;2-c;t),
\]
expressed in terms of the hypergeometric function $\,_2 F _1(a,b;c;t)$.

We note that the eigenvalue problem $\square_m f=(\mu^2-1)f$ is a hypergeometric differential equation thus giving us two linearly independent solutions 
$\varphi^{m}_\mu$ and $\psi_\mu^{m}$. Using the notation from appendix \ref{Fourier Jacobi} we can express these solutions as
\[
\varphi^{m}_\mu(u)=\phi_{\frac{-i\mu}{2}}^{-\frac{1}{2},\frac{ m}{2}}(u),\quad \psi_\mu^{m}(u)=i\sinh(u)\cdot   \phi_{\frac{-i\mu}{2}}^{\frac{1}{2},\frac{m}{2}}(u), 
\]
where $u\in [0,\infty)$. Note that these functions allow for natural extensions from $[0,\infty)$ to  $\RR$. We now restate the results from Appendix \ref{Fourier Jacobi} in terms of $\varphi_\mu^m$ and $\psi_\mu^m$.
\begin{proposition}
\label{FF}
For $f\in C^\infty_c([0,\infty))$,
let $(J_j f)(\mu)$, $j=0,1,$ denote the Fourier--Jacobi transforms of $f$ given by
\begin{align*}
    J_0 f(\mu) &=\int_0^\infty f(t)\varphi_{\mu }^{m}(t) \cosh^{m+1}(t)\, dt,\\
    J_1 f(\mu) &= \int_0^\infty f(t)\psi_{\mu }^{m}(t) \sinh(t)\cosh^{ m+1}(t)\, dt.
\end{align*}
Then we have the following inversion formulas
\begin{align*}
    f(t) &= \frac{1}{4\pi^2}\int_{i\RR} J_0 f(\mu)\varphi^{m}_{\mu}(t) \frac{d\mu}{|\ell_0(\mu)|^{2}} -\frac{1}{2\pi}\sum\limits_{\mu \in D_0} J_0 f(\mu)\varphi_{\mu}^{m}(t)\underset{\nu=\mu}{\Res}(\ell_0(\nu)\ell_0(-\nu))^{-1}\\
    \sinh(t)f(t) &= \frac{-1}{\pi^2}\int_{i\RR} J_1 f(\mu)\psi^{ m}_{\mu}(t) \frac{d\mu}{|\ell_1(\mu)|^{2}} +\frac{2}{\pi}\sum\limits_{\mu \in D_1} J_1f(\mu)\psi_{\mu}^{m}(t)\underset{\nu=\mu}{\Res}(\ell_1(\nu)\ell_1(-\nu))^{-1}.
\end{align*}
with $D_j=\{\eta \in \RR\mid \eta=4k+1+2j-|m|<0, \, k\in \NN_0 \}$ and

\begin{align*}
    \ell_j(\mu) = \frac{\Gamma(\frac{\mu}{2})}{\Gamma(\frac{\mu+1+2j+|m|}{4})\Gamma(\frac{\mu+1+2j-|m|}{4})}.
\end{align*}
\end{proposition}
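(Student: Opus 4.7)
The plan is to deduce Proposition \ref{FF} by translating the inversion/Plancherel theorem for the Fourier--Jacobi transform $\phi^{\alpha,\beta}_\lambda$ (recorded in Appendix \ref{Fourier Jacobi}) into the $(\varphi_\mu^m,\psi_\mu^m)$--notation. All of the hard analytic content — the Plancherel density $|c^{\alpha,\beta}|^{-2}$, the domain of spectral parameters $\lambda\in i\RR$, and the discrete contribution at the poles of $c^{\alpha,\beta}$ in the relevant half--plane — is supplied by that appendix; what remains is a bookkeeping argument.

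First I would handle $J_0$. Since $\varphi_\mu^m(u)=\phi^{-1/2,m/2}_{-i\mu/2}(u)$ and the Jacobi weight for $(\alpha,\beta)=(-1/2,m/2)$ is $(\sinh u)^{2\alpha+1}(\cosh u)^{2\beta+1}=\cosh^{m+1}(u)$ up to a harmless power of $2$, the transform $J_0$ is (up to this constant) the Jacobi transform at parameter $(-1/2,m/2)$. The inversion formula from the appendix then gives an expansion of $f$ in $\varphi^m_\mu$ with Plancherel density $|c^{-1/2,m/2}(-i\mu/2)|^{-2}d\mu$ plus a discrete contribution at the poles of $c^{-1/2,m/2}(\lambda)c^{-1/2,m/2}(-\lambda)$ lying in the relevant half--plane. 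Substituting $\lambda=-i\mu/2$ and $\rho=\alpha+\beta+1$ into the standard product formula for the $c$--function, the density collapses to $\ell_0(\mu)$ as defined in the statement, and the discrete poles match the condition $\mu=4k+1-|m|<0$, $k\in\NN_0$, i.e.\ $\mu\in D_0$.

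For $J_1$ I would use $\psi_\mu^m(t)=i\sinh(t)\phi^{1/2,m/2}_{-i\mu/2}(t)$, so that
\[
J_1f(\mu)=i\int_0^\infty f(t)\,\phi^{1/2,m/2}_{-i\mu/2}(t)\,\sinh^2(t)\cosh^{m+1}(t)\,dt,
\]
which is $i$ times the Jacobi transform of $f$ at $(\alpha,\beta)=(1/2,m/2)$, since $(\sinh u)^{2\alpha+1}(\cosh u)^{2\beta+1}=\sinh^2(u)\cosh^{m+1}(u)$ for these parameters. Applying the appendix's inversion theorem to $f$ (not $\sinh(t)f(t)$) and multiplying both sides by $\sinh(t)$ produces the left--hand side $\sinh(t)f(t)$ and inserts a factor $\sinh(t)\phi^{1/2,m/2}_{-i\mu/2}(t)=-i\psi^m_\mu(t)$ on the right. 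The two factors of $i$ combine with the signs $-1/\pi^2$ and $2/\pi$ in the statement. Once again a Gamma--shuffle converts $|c^{1/2,m/2}(-i\mu/2)|^{-2}$ into $\ell_1(\mu)$, and the discrete poles translate into $\mu=4k+3-|m|<0$, matching $D_1$.

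The only genuine care is in step four: the residue contributions. Here I would note that the residue of $(\ell_j(\nu)\ell_j(-\nu))^{-1}$ at a pole $\mu\in D_j$ is exactly the square of the $L^2$--norm of the corresponding discrete Jacobi eigenfunction (up to the explicit prefactor coming from the Plancherel decomposition in the appendix), which is how the discrete series terms arise in the standard proof. Thus the principal obstacle is not conceptual but computational: carefully tracking the constants $\tfrac{1}{4\pi^2}$, $-\tfrac{1}{2\pi}$, $-\tfrac{1}{\pi^2}$, $\tfrac{2}{\pi}$ through the change of variable $\lambda=-i\mu/2$ (which contributes a Jacobian $\tfrac{1}{2i}$) and through the factor of $i$ introduced by the definition of $\psi_\mu^m$. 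Modulo this bookkeeping the proposition follows immediately from the appendix.
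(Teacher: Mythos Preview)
Your approach is correct and is exactly what the paper does: the proposition is introduced as a direct restatement of the Fourier--Jacobi inversion formula from Appendix~\ref{Fourier Jacobi}, specialized to $(\alpha,\beta)=(-\tfrac12,\tfrac m2)$ for $J_0$ and $(\alpha,\beta)=(\tfrac12,\tfrac m2)$ for $J_1$, and the paper gives no further proof. The only caveat is that in the paper's appendix convention the spectral substitution matching $c_{\alpha,\beta}$ with $\ell_j$ is $\mu_{\mathrm{app}}=\mu/2$ (so the Jacobian is $\tfrac12$, not $\tfrac{1}{2i}$), but this falls squarely within the bookkeeping you already flag.
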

\begin{Remark}\label{delta er ligemeget-}
As $\varphi_\mu^m$ and $\psi_\mu^m$ are given in terms of hypergeometric functions we get 
\[
\varphi_\mu^m=\varphi_{-\mu}^m,\quad\text{and}\quad \psi_\mu^m=\psi_{-\mu}^m,
\]
as$\ _2F_1(a,b;c;t)=\ _2F_1(b,a;c;t)$.
The Euler transformation$\ _2F_1(a,b;c;t)=(1-t)^{c-a-b} \ _2F_1(c-a,c-b;c;t)$ amounts to 
\begin{align*}
    \varphi_\mu^{m}(u)=\cosh^{- m}(u)\varphi_\mu^{- m}(u),\quad\text{and}\quad \psi_\mu^{m}(u)=\cosh^{- m}(u)\psi_\mu^{- m}(u).
\end{align*}
\end{Remark}

\section{Intertwining operators}
\noindent To obtain an explicit Plancherel formula for representation theoretic purposes, we require expressions for intertwining operators between the representation spaces introduced in earlier sections. More explicitly we consider intertwining operators 
\begin{align*}
    P:\Ind_P^G(\delta \otimes e^\mu \otimes 1)\rightarrow \Ind_H^G(\varepsilon\otimes e^\lambda)^\infty,\\
    A:C_c^\infty\mbox{--}\Ind_H^G(\varepsilon\otimes e^\lambda)\rightarrow \Ind_P^G(\delta \otimes e^\mu \otimes 1)
\end{align*}
and their realizations in terms of the coordinates introduced in earlier sections. Such operators only exist when $\varepsilon=\delta$ as $M$ lies in the center of $G$.

We fix $\varepsilon\in \ZZ /2\ZZ$ and supress it in the notation for the rest of this section. When $\varepsilon$ appear in formulas we will consider it as number in $\{0,1\}$ where we will use the notation $[\,\cdot\,]_2$ when confusions can occur.

For $\xi\in \ZZ /2\ZZ $ and $\lambda\in i\RR$ consider the kernel
\[
K_{\lambda,\mu}^\xi(g)=|g_{11}|^{\frac{\lambda+\mu-1}{2}}_{\xi+\varepsilon}|g_{21}|^{\frac{\mu-\lambda-1}{2}}_\xi,\quad g\in G,
\]
where $g_{ij}$ is the $(i,j)$'th entry in $G$. As $g_{11}$ and $g_{21}$ does not simultaneously vanish in $G$ this kernel enjoys many of the similar properties as a Riesz distribution (see Appendix \ref{Fourier transform and Riesz distributions}). $\Gamma(\frac{\mu+1}{2})^{-1}K_{\lambda,\mu}^\xi$ is locally integrable for $\Re{(\mu)}>-1$ and admits a holomorphic continuation as a distribution to $\mu \in \CC.$  
\begin{proposition} \label{definition af intertwiner}
The map given by
\[
P_{\lambda,\mu}^\xi f(g)=\int_{K} K_{\lambda, \mu}^\xi(g^{-1}k)f(k)\,dk,
\]
defines an intertwining operator $\Ind_P^G(\varepsilon \otimes e^\mu \otimes 1)\rightarrow \Ind_H^G(\varepsilon\otimes e^\lambda)^\infty$.
Similarly the map given by 
\[
A^\xi_{\lambda,\mu}f(g)=\int_{G/H} K_{-\lambda,-\mu}^\xi(x^{-1} g) f(x) \,d(xH),
\]
defines an intertwining operator $C_c^\infty \mbox{--}\Ind_H^G(\varepsilon\otimes e^\lambda)\rightarrow \Ind_P^G(\varepsilon \otimes e^\mu \otimes 1)$. Both integrals should be understood in the distributional sense. 
\end{proposition}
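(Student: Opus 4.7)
The plan is to establish three things about the kernel $K_{\lambda,\mu}^\xi$: correct biequivariance (under $H$ on the left, and under $P$ on the right), convergence of the defining integrals for $\Re(\mu)\gg 0$ together with meromorphic continuation in $\mu$, and left $G$-equivariance of both operators.

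First I would observe that $K_{\lambda,\mu}^\xi(g)$ depends only on the first column $g\,e_1 = (g_{11}, g_{21})^T$, hence descends to a function on $G/N \cong \RR^2\setminus\{0\}$. To check the left $H$-equivariance relevant for $P_{\lambda,\mu}^\xi$, I write $h = ma$ with $m = \pm I$ and $a = \diag(t,t^{-1})$, $t > 0$. Then $(hg)_{11} = \pm t\,g_{11}$ and $(hg)_{21} = \pm t^{-1} g_{21}$; expanding out $|\cdot|_{\xi+\varepsilon}$ and $|\cdot|_\xi$ and combining, the factor $(\pm 1)^{2\xi}$ cancels and we obtain
\[
K_{\lambda,\mu}^\xi(hg) = (\pm 1)^\varepsilon t^\lambda K_{\lambda,\mu}^\xi(g) = \chi_{\varepsilon,\lambda}(h)\,K_{\lambda,\mu}^\xi(g).
\]
Replacing $g$ by $g^{-1}k$ inside $P_{\lambda,\mu}^\xi$ then produces the factor $\chi_{\varepsilon,\lambda}(h)^{-1}$, i.e.\ membership in $\Ind_H^G(\varepsilon \otimes e^\lambda)$. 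A parallel computation shows $K_{-\lambda,-\mu}^\xi(g p) = |t|_\varepsilon^{-\mu-1} K_{-\lambda,-\mu}^\xi(g)$ for $p = man \in P$; the key ingredient is that right multiplication by $n \in N$ leaves the first column untouched while $ma$ scales it uniformly, so the $\xi$-dependence again washes out through a $2\xi$ exponent. This yields the required $P$-equivariance of $A_{\lambda,\mu}^\xi f$.

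Next I would address well-definedness. Each one-dimensional factor $|t|_\varepsilon^s$ is locally integrable on $\RR$ for $\Re(s) > -1$, so for $\Re(\mu) > -1$ and $\lambda \in i\RR$ the kernel $K_{\lambda,\mu}^\xi$ is locally integrable on $G$ and both integrals converge absolutely (the integration is over the compact $K$ in one case and over a compactly-supported $G/H$-slice in the other). Riesz-type regularization of each factor together with the global prefactor $\Gamma(\tfrac{\mu+1}{2})^{-1}$ (mentioned just before the statement) extends $K_{\lambda,\mu}^\xi$ holomorphically in $\mu$ as a distribution on $G$, and both integrals inherit this extension.

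Finally, the intertwining property. For $A_{\lambda,\mu}^\xi$ I would change variables $x \mapsto g_0 x$ in $\int_{G/H} K_{-\lambda,-\mu}^\xi(x^{-1} g) f(x)\,d(xH)$ and exploit $G$-invariance of $d(xH)$ to see immediately that $A_{\lambda,\mu}^\xi \circ \tau_{\varepsilon,\lambda}(g_0) = \pi_{\varepsilon,\mu}(g_0) \circ A_{\lambda,\mu}^\xi$. For $P_{\lambda,\mu}^\xi$ I would identify $K/M \cong G/P$ via the Iwasawa decomposition and reinterpret $\int_K(\cdots)\,dk$ as a pairing of the $P$-equivariant distribution $k \mapsto K_{\lambda,\mu}^\xi(g^{-1}k)$ against the $P$-equivariant $f$; by the standard Iwasawa-cocycle argument, the right-$P$-equivariance of $K$ exactly compensates the Jacobian from translating $k$, so the pairing is $G$-translation invariant. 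Smoothness of $P_{\lambda,\mu}^\xi f$ then follows because $g \mapsto K_{\lambda,\mu}^\xi(g^{-1}\cdot)$ is a smooth family of distributions being paired with a smooth $f$ on the compact manifold $K$.

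The main obstacle is the careful handling of the singularities of $K_{\lambda,\mu}^\xi$, which lie on the two subvarieties $\{g_{11}=0\}$ and $\{g_{21}=0\}$. Verifying that the single normalization $\Gamma(\tfrac{\mu+1}{2})^{-1}$ (rather than a separate Gamma factor for each coordinate) suffices to cancel all poles and produce a \emph{jointly} holomorphic family of distributions on $G$ is the delicate step; once that is in place, the bi-equivariance and intertwining computations are essentially formal substitutions.
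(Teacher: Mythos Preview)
Your argument is correct and follows exactly the route the paper takes: the paper's proof is the single sentence ``The equivariance properties follows by direct verification,'' and your proposal simply carries out that verification in detail (biequivariance of the kernel, convergence/analytic continuation via the Riesz-type normalization already recorded before the proposition, and the standard change-of-variables/Iwasawa argument for $G$-equivariance). There is nothing to add beyond noting that your write-up is substantially more explicit than what the paper records.
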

\begin{proof}
The equivariance properties follows by direct verification. 
\end{proof}
\begin{proposition} \label{Tony transform}
For $\xi,\varepsilon \in \ZZ/2\ZZ$ we have the following relation
\begin{align*}
    T_\mu^\varepsilon \circ A_{\lambda,\mu}^\xi =d^\xi_{\lambda,\mu}  A_{\lambda,-\mu}^{\xi+\varepsilon},
\end{align*}
where 
\[
d_{\lambda,\mu}^\xi=(-1)^{\lfloor \frac{\varepsilon+\xi}{2}\rfloor}\sqrt{\pi}\frac{\Gamma\big(\frac{1-\lambda-\mu+2[\xi+\varepsilon]_2}{4}\big)\Gamma\big(\frac{1+\lambda-\mu+2\xi}{4}\big)}{\Gamma\big(\frac{1+\lambda+\mu+2[\xi+\varepsilon]_2}{4}\big)\Gamma\big(\frac{1-\lambda+\mu+2\xi}{4}\big)\Gamma\big(\frac{1-\mu+\varepsilon}{2}\big)}.
\]
\end{proposition}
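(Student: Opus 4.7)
My approach is to compute the composition $T_\mu^\varepsilon \circ A_{\lambda,\mu}^\xi$ explicitly as an integral operator and identify its kernel with a scalar multiple of the kernel of $A_{\lambda,-\mu}^{\xi+\varepsilon}$. The main content is a one-dimensional integral identity of the form $\int_\RR |at-b|^\alpha_{\xi_1}|ct-d|^\beta_{\xi_2}\,dt$, which I would reduce to a standard beta-type integral using Lemma~\ref{casselman integral}.

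\textbf{Step 1.} Starting from the definitions of $T_\mu^\varepsilon$ and $A_{\lambda,\mu}^\xi$, I would apply Fubini to interchange the integrals over $\overline{N}$ and $G/H$. This is justified by first restricting to a strip of $\mu$ where both integrals converge absolutely, then extending by holomorphic continuation (the same kind of normalization argument that makes $T_\mu^\varepsilon$ well-defined on all of $\CC$). This gives
\[
T_\mu^\varepsilon A_{\lambda,\mu}^\xi f(g) = \int_{G/H} L_{\lambda,\mu}^\xi(x^{-1}g)\,f(x)\,d(xH), \qquad L_{\lambda,\mu}^\xi(y) = \frac{1}{\Gamma(\frac{\mu+\varepsilon}{2})}\int_{\overline{N}}K_{-\lambda,-\mu}^\xi(yw_0\overline{n})\,d\overline{n}.
\]
The conclusion is then equivalent to the pointwise identity $L_{\lambda,\mu}^\xi(y) = d_{\lambda,\mu}^\xi\, K_{-\lambda,\mu}^{\xi+\varepsilon}(y)$ of distributional kernels on $G$.

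\textbf{Step 2.} For $y=\bigl(\begin{smallmatrix}a&b\\c&d\end{smallmatrix}\bigr)\in G$ with $ad-bc=1$, a direct matrix multiplication gives $(yw_0\overline{n}_t)_{11}=at-b$ and $(yw_0\overline{n}_t)_{21}=ct-d$, so the identity to verify reads
\[
\int_\RR |at-b|_{\xi+\varepsilon}^{(-\lambda-\mu-1)/2}|ct-d|_{\xi}^{(-\mu+\lambda-1)/2}\,dt = \Gamma\!\left(\tfrac{\mu+\varepsilon}{2}\right) d_{\lambda,\mu}^\xi\, |a|_\xi^{(-\lambda+\mu-1)/2}|c|_{[\xi+\varepsilon]_2}^{(\mu+\lambda-1)/2}.
\]
On the open dense set where $c\neq 0$ I would substitute $s=ct-d$, using $at-b=(as+1)/c$ and $dt=ds/|c|$. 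The factors $|c|^{\bullet}_{\bullet}$ can be pulled out of the integral (collecting sign contributions $\sgn(c)^{\xi+\varepsilon}$ which collapse modulo $2$), reducing the identity to the one-variable integral $\int_\RR |as+1|_{\xi+\varepsilon}^{(-\lambda-\mu-1)/2}|s|_\xi^{(-\mu+\lambda-1)/2}\,ds$. A further substitution $s\mapsto s/a$ scales out $a$, after which the integral is precisely of the type handled by Lemma~\ref{casselman integral} (splitting the signed integral over $s>0$ and $s<0$ reproduces exactly the Gamma-function product computed there).

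\textbf{Step 3.} The final step is bookkeeping: collecting the Gamma factors from the beta evaluation, dividing by $\Gamma(\frac{\mu+\varepsilon}{2})$, and matching exponents to identify the constant with the claimed $d_{\lambda,\mu}^\xi$. The main obstacle I expect is the sign bookkeeping. The exponent $\xi+\varepsilon\in\{0,1,2\}$ must be reduced to $[\xi+\varepsilon]_2\in\{0,1\}$, and when $\xi+\varepsilon=2$ the two $\sgn(\cdot)$-factors multiply to an apparent $+1$, but tracking the substitutions carefully (particularly $\sgn(a)$ and $\sgn(c)$ arising from the changes of variable) leaves a residual $(-1)^{\lfloor(\xi+\varepsilon)/2\rfloor}$ — which is exactly the sign appearing in $d_{\lambda,\mu}^\xi$. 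Keeping this parity consistent with the subscripts on the right-hand kernel is the most delicate part of the calculation; once this is done, the four Gamma factors in the denominator/numerator of $d_{\lambda,\mu}^\xi$ match the beta evaluation directly.
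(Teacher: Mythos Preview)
Your approach is essentially the same as the paper's: interchange the $\overline N$-integral with the $G/H$-integral, reduce to a pointwise identity of kernels, and evaluate the resulting one-variable integral; then extend by analytic continuation. The paper carries this out in the strip $0<\Re\mu<1$ and writes the inner integral as
\[
\int_\RR |x|_{\xi+\varepsilon}^{\frac{-\lambda-\mu-1}{2}}\Big|x-\tfrac{1}{z_{11}z_{21}}\Big|_{\xi}^{\frac{\lambda-\mu-1}{2}}\,dx,
\]
which is exactly the convolution of two Riesz distributions.

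The one point to correct is the reference for the evaluation of this integral. Lemma~\ref{casselman integral} computes $\int_\RR (x-i)^{-\alpha}(x+i)^{-\beta}\,dx$, which is a different object (complex powers, no sign characters) and is not what you need here. The identity you actually need is Lemma~\ref{convolution constant}, which evaluates $\int_\RR |x|^\alpha_\varepsilon|y-x|^\beta_\xi\,dx$ and already contains the sign $(-1)^{\lfloor(\varepsilon+\xi)/2\rfloor}$ and the four Gamma factors you are trying to assemble by hand in Step~3. Invoking that lemma directly makes your Step~3 bookkeeping automatic and is precisely what the paper does.
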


\begin{proof}
Fix $g\in G$ and put $z=x^{-1}g$. Then the set $\{xH\in G/H\mid z_{11}z_{21}= 0\}$ is a $d(xH)$-null set. Using Lemma \ref{convolution constant} for $0<\Re{(\mu)}<1$ we get 

\begin{align*}
    \int_{\overline{N}} K_{-\lambda,-\mu}^\xi(zw_0\overline{n}) d\overline{n}
    &= |z_{11}|_{\xi+\varepsilon}^{\frac{-\mu-\lambda-1}{2}}|z_{21}|_{\xi}^{\frac{\lambda-\mu-1}{2}}\int_\RR|x|_{\xi+\varepsilon}^{{\frac{-\lambda-\mu-1}{2}}}|x-\frac{1}{z_{11}z_{21}}|_{\xi}^{\frac{\lambda-\mu-1}{2}}dx\\
    &=\Gamma\big(\frac{\mu+\varepsilon}{2}\big)d^\xi_{\lambda,\mu} K_{-\lambda,\mu}^{\xi+\varepsilon}(z),\quad a.e.
\end{align*}
The claim then follows by analytic continuation.
\end{proof}
We introduce the notation 
\[
\omega_m^\xi=(-1)^\xi+(-1)^m i^m. 
\]
Note that $\omega_{-m}^\xi=\overline{\omega}_m^\xi$ and as $\varepsilon\equiv m \mod 2$ we have 
\[
\omega_m^\xi=
\begin{cases}
(-1)^\xi+(-1)^\frac{m}{2},& \varepsilon=0,\\
(-1)^{\xi}+i(-1)^\frac{m+1}{2},&\varepsilon=1,
\end{cases}
\quad\text{and}\quad
\omega_m^0\overline{\omega}_m^1=
\begin{cases}
0,&\varepsilon=0,\\
2i(-1)^\frac{m-1}{2}, &\varepsilon=1.
\end{cases}
\]
\begin{proposition}
\label{P image}
For $\mu\in \CC$ and $m\in \ZZ$ we have 
$$\frac{1}{\Gamma\big(\frac{\mu+1}{2}\big)}P_{\mu}^\xi\zeta_m(k_\theta b_u)=\zeta_m(k_\theta) \cosh^{\frac{m}{2}}(2u) \left(\omega_m^\xi c_m(\mu) \varphi_\mu^{ m}(2u)+\frac{i}{2}\omega_m^{\xi+1} c_m(\mu-2)\psi^{ m}_{\mu}(2u)\right),$$
where
\begin{align*}
    c_m(\mu)= \frac{2^{1-\mu}\pi e^{i\frac{m\pi}{4}}}{\Gamma(\frac{\mu+3+|m|}{4})\Gamma(\frac{\mu+3-|m|}{4})}.
\end{align*}
\end{proposition}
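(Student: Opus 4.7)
The plan is to exploit $K$-equivariance and the Casimir eigenvalue to reduce the identity to the determination of two scalars. Since $P_\mu^\xi$ commutes with the left $K$-action and $\zeta_m$ is a $K$-character, we have $P_\mu^\xi\zeta_m(k_\theta b_u)=\zeta_m(k_\theta)\cdot P_\mu^\xi\zeta_m(b_u)$, reducing everything to the function of $u$ alone. Moreover $P_\mu^\xi$ intertwines the full $G$-action and hence the Casimir, so by Proposition \ref{Eigenvalue Casimir} the image $P_\mu^\xi\zeta_m$ is a $\Delta_0$-eigenfunction with eigenvalue $\mu^2-1$. Writing $P_\mu^\xi\zeta_m(b_u)=\cosh^{m/2}(2u)\,H(-\sinh^2(2u))$ and applying Lemma \ref{Casimir on m part}, the function $H$ satisfies the hypergeometric equation $\square_m H=(\mu^2-1)H$, whose two-dimensional solution space is spanned by $\varphi_\mu^m(2u)$ and $\psi_\mu^m(2u)$. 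Hence
\[
P_\mu^\xi\zeta_m(b_u)=\cosh^{m/2}(2u)\bigl[\alpha\,\varphi_\mu^m(2u)+\beta\,\psi_\mu^m(2u)\bigr]
\]
for scalars $\alpha,\beta$ that it suffices to determine.

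I would pin these down via $\varphi_\mu^m(0)=1$, $\psi_\mu^m(0)=0$, $\partial_u\varphi_\mu^m(2u)|_{u=0}=0$ (by evenness in $u$) and $\partial_u\psi_\mu^m(2u)|_{u=0}=2i$ (from $\psi_\mu^m(u)=i\sinh(u)\phi_{-i\mu/2}^{1/2,m/2}(u)$), which give $\alpha=P_\mu^\xi\zeta_m(e)$ and $\beta=(2i)^{-1}\partial_u P_\mu^\xi\zeta_m(b_u)|_{u=0}$. For the value at $u=0$, splitting $\int_K K_{0,\mu}^\xi(k_\phi)e^{im\phi}\,d\phi$ into the four quadrants via $\phi\mapsto\pi\pm\phi'$ and $\phi\mapsto 2\pi-\phi'$, the sign characters $\sgn(\cos\phi)^{\xi+\varepsilon}$ and $\sgn(\sin\phi)^\xi$ collapse (using $m\equiv\varepsilon\pmod 2$) to yield
\[
P_\mu^\xi\zeta_m(e)=2\omega_m^\xi\,I_+(\mu),\qquad I_+(\mu):=\int_0^{\pi/2}\cos^{\frac{\mu-1}{2}}\phi\,\sin^{\frac{\mu-1}{2}}\phi\,e^{im\phi}\,d\phi.
\]
After rewriting $\cos\phi\sin\phi=\sin(2\phi)/2$ and substituting $v=2\phi$, the classical identity $\int_0^\pi\sin^{\nu-1}v\,e^{i\alpha v}\,dv=\pi e^{i\alpha\pi/2}\Gamma(\nu)/[2^{\nu-1}\Gamma(\tfrac{\nu+\alpha+1}{2})\Gamma(\tfrac{\nu-\alpha+1}{2})]$ evaluates $I_+(\mu)=\tfrac{1}{2}\Gamma(\tfrac{\mu+1}{2})c_m(\mu)$, so $\alpha=\Gamma(\tfrac{\mu+1}{2})\omega_m^\xi c_m(\mu)$.

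For $\beta$ the key is a shift-of-parameters identity for the $u$-derivative. Differentiating the matrix entries $(b_{-u}k_\phi)_{11}=\cosh u\cos\phi+\sinh u\sin\phi$ and $(b_{-u}k_\phi)_{21}=-\sinh u\cos\phi-\cosh u\sin\phi$ at $u=0$, and using $\sin^2\phi+\cos^2\phi=1$ to fuse the two product-rule contributions, yields
\[
\partial_u K_{0,\mu}^\xi(b_{-u}k_\phi)\big|_{u=0}=-\tfrac{\mu-1}{2}\,K_{0,\mu-2}^{\xi+1}(k_\phi),
\]
valid pointwise off the measure-zero locus $\cos\phi\sin\phi=0$. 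Integrating against $\zeta_m$ and reusing the formula for $\alpha$ with $(\mu,\xi)$ replaced by $(\mu-2,\xi+1)$ gives $\partial_u P_\mu^\xi\zeta_m(b_u)|_{u=0}=-(\mu-1)\omega_m^{\xi+1}I_+(\mu-2)$; the functional equation $(\mu-1)\Gamma(\tfrac{\mu-1}{2})=2\Gamma(\tfrac{\mu+1}{2})$ then gives $\beta=\tfrac{i}{2}\Gamma(\tfrac{\mu+1}{2})\omega_m^{\xi+1}c_m(\mu-2)$. Dividing through by $\Gamma(\tfrac{\mu+1}{2})$ produces the formula of Proposition \ref{P image}.

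The hardest step is the explicit evaluation of $I_+(\mu)$ into exactly the Gamma factors defining $c_m(\mu)$: this classical but delicate integral computation ultimately relies on the Legendre duplication formula to match the normalization. A secondary technical point is the shift-of-parameters identity for $\partial_u K_{0,\mu}^\xi$, which requires careful bookkeeping of the locally-constant sign characters $\sgn(\cdot)^{\xi+\varepsilon}$ and $\sgn(\cdot)^\xi$; however, since the singular locus of $K_{0,\mu}^\xi$ on $K$ has measure zero, the pointwise identity off that locus suffices for the integrated statement.
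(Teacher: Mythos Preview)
Your proof is correct and follows essentially the same route as the paper: reduce via the Casimir eigenvalue and Lemma~\ref{Casimir on m part} to determining two scalars, evaluate at $u=0$ for the first, differentiate at $u=0$ and use the shift identity $\partial_u K_{0,\mu}^\xi(b_{-u}k_\phi)|_{u=0}=\tfrac{1-\mu}{2}K_{0,\mu-2}^{\xi+1}(k_\phi)$ for the second, and compute the resulting trigonometric integral by Lemma~\ref{appendix trig integral}. The only point you leave implicit that the paper states is the final extension by analytic continuation in $\mu$ from the region of absolute convergence to all of $\CC$.
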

\begin{proof}
As $P_\mu^\xi$ intertwines $\pi_{\varepsilon,\mu}$ and $\tau_{\varepsilon,0}$ 
it also intertwines the derived representations $d\pi_{\varepsilon,\mu}$ and $d\pi_{\varepsilon,0}$. Hence $P_\mu^\xi$ intertwines $\Delta_0$ and $\Delta_\mu$ and therefore the image of $P_\mu^\xi$ is contained in the eigenspace of $\Delta_0$ to the eigenvalue $\mu^2-1$ by Proposition \ref{Eigenvalue Casimir}. Fix $\mu$ with $\Re{(\mu)}>1$. From Lemma \ref{Casimir diff operator} it follows for generic $\mu$ that
\begin{align*}
    P_\mu^\xi\zeta_m(k_\theta b_u) =\cosh^{\frac{m}{2}}(2u)\zeta_m(k_\theta)\big( a_m^\xi(\mu)\cdot \varphi_\mu^{m}(2u) + b^\xi_m(\mu)\cdot \psi_\mu^{m}(2u)\big)
\end{align*}
for some $a_m^\xi(\mu),b_m^\xi(\mu)\in \CC$.
Hence, it only remains to compute $a_m^\xi(\mu)$ and $b^\xi_m(\mu)$. Note that $\varphi_\mu^m(0)=1$ and $\psi_\mu^m(0)=0$ and hence $P_\mu^{\xi}\zeta_m(k_\theta)=\zeta_m(k_\theta)a_m^\xi(\mu)$ so
\begin{align*}
    a_m^\xi(\mu) &=P_\mu^{\xi}\zeta_m(e) = \int_K K^\xi_{\mu}(k_{\theta})\zeta_m(k_\theta) dk_\theta=2\int_0^\pi|\cos \theta |_{\xi+\varepsilon}^\frac{\mu-1}{2}|-\sin \theta |^\frac{\mu-1}{2}_\xi e^{i m\theta }\,d\theta \\
    &=2^{\frac{1-\mu}{2}}\omega_m^\xi \int_{0}^{\pi} (\sin \theta)^{\frac{\mu-1}{2}}e^{i\frac{m}{2}\theta}d\theta
    =\frac{2^{1-\mu}\pi\omega_m^\xi e^{i\frac{m\pi}{4}}\Gamma(\frac{\mu+1}{2})}{\Gamma(\frac{\mu+3+|m|}{4})\Gamma(\frac{\mu+3-|m|}{4})},
\end{align*}
by Lemma \ref{appendix trig integral}.

To compute $b_m^\xi(\mu)$ it suffices to note that $\frac{d}{du}\varphi_{\mu}^{m}(2u)\mid_{u=0}=0$ and $\frac{d}{du}\psi_{\mu}^{ m}(2u)\mid_{u=0}\-=2i$, hence
\begin{align*}
    2i\cdot b_m^\xi(\mu)&=\frac{d}{du}P_\mu^\xi \zeta_m( b_u)\big|_{u=0}=\int_K \frac{d}{du} K_\mu^\xi(b_{-u}k_{\theta})\big|_{u=0}\zeta_m(k_\theta) dk_\theta\\
    &=\frac{1-\mu}{2}\int_KK_{\mu-2}^{\xi+1}(k_{\theta})\zeta_{m}(k_\theta)dk_\theta\\
    &= \frac{1-\mu}{2}a_m^{\xi+1}(\mu-2),
\end{align*}
from which the result follows by analytic continuation. 
\end{proof}
\noindent Let $f\in \Ind_H^G(\varepsilon \otimes e^0)_m$ and write $f(k_\theta b_u)=\zeta_m(k_\theta)h(u)$ for some $h\in L^2(\RR,\cosh(2u) du)$. Now let $h_e(u)$ be the even part of $h$ and $h_o(u)$ the odd part. We introduce the following notation 
\begin{align*}
    J_0 f(\mu,\theta) &=\zeta_m(k_\theta)J_0(\cosh^{-\frac{ m}{2}}(x)h_{e}( b_\frac{x}{2}))(\mu),\\
    J_1 f(\mu,\theta) &= \zeta_m(k_\theta)J_1(\sinh^{-1}(x)\cosh^{-\frac{ m}{2}}(x)h_{o}(b_{\frac{x}{2}}))(\mu)
\end{align*}
where the $x$ denotes the variable the Fourier--Jacobi transform is done with respect to. 
\begin{proposition}\label{A image}
Let $f\in \Ind_H^G(\varepsilon \otimes e^0)_m$ then
\[
\frac{1}{\Gamma\big(\frac{1-\mu}{2}\big)} A_\mu^\xi f(k_\theta) =\frac{\overline{\omega}_m^\xi}{2} c_{-m}(-\mu)J_0 f(\mu,\theta) +i\frac{\overline{\omega}_m^{\xi+1}}{4}c_{-m}(-\mu-2)J_1 f(\mu,\theta).
\]
\end{proposition}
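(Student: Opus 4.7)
The plan is to reduce the proof to computing $A_\mu^\xi f$ at a single point, and then dualize against the formula already established in Proposition~\ref{P image}. Since $A_\mu^\xi$ is $G$-intertwining, it is in particular $K$-intertwining, and as the $m$-th $K$-type of $\Ind_P^G(\varepsilon\otimes e^\mu\otimes 1)$ is the one-dimensional space $\CC\zeta_m$, we must have $A_\mu^\xi f(k_\theta)=\zeta_m(k_\theta)\cdot A_\mu^\xi f(e)$. So it suffices to compute $A_\mu^\xi f(e)$, which we extract via $2\pi\cdot A_\mu^\xi f(e)=\int_K A_\mu^\xi f(k)\zeta_{-m}(k)\,dk$; multiplying by $\zeta_m(k_\theta)$ at the end will reinstate the $\theta$-dependence, matching the $\zeta_m(k_\theta)$ prefactor already built into the definition of $J_jf(\mu,\theta)$.

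Next I would interchange the order of integration. For $\mu$ in a range where the kernel is absolutely integrable, and extending afterwards by analytic continuation, Fubini gives
\[
\int_K A_\mu^\xi f(k)\zeta_{-m}(k)\,dk=\int_{G/H} f(x)\biggl(\int_K K_{0,-\mu}^\xi(x^{-1}k)\zeta_{-m}(k)\,dk\biggr)d(xH)=\int_{G/H} f(x)\,P_{-\mu}^\xi\zeta_{-m}(x)\,d(xH),
\]
since the inner integral is by definition $P_{-\mu}^\xi\zeta_{-m}(x)$. Now I would apply Proposition~\ref{P image} with parameters $(\mu,m)$ replaced by $(-\mu,-m)$ to obtain an explicit formula for $P_{-\mu}^\xi\zeta_{-m}(k_\theta b_u)$ in terms of $\varphi_{-\mu}^{-m}(2u)$ and $\psi_{-\mu}^{-m}(2u)$. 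Invoking Remark~\ref{delta er ligemeget-}, these rewrite as $\cosh^{m}(2u)\varphi_\mu^m(2u)$ and $\cosh^{m}(2u)\psi_\mu^m(2u)$, respectively. Together with the identity $\omega_{-m}^\xi=\overline{\omega}_m^\xi$ and the $\cosh^{-m/2}(2u)$ already appearing in Proposition~\ref{P image}, this produces the prefactor $\cosh^{m/2}(2u)$ matching the one hidden inside $J_jf$.

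To finish, I would evaluate the resulting $G/H$-integral in the coordinates $(\theta,u)\in[0,\pi)\times\RR$ with measure $\cosh(2u)\,du\,d\theta$. Writing $f(k_\theta b_u)=\zeta_m(k_\theta)h(u)$, the $\theta$-integral pairs $e^{im\theta}$ against $\zeta_{-m}(k_\theta)=e^{-im\theta}$ and yields a factor $\pi$. For the remaining $u$-integral I split $h=h_e+h_o$ into even and odd parts: since $\varphi_\mu^m$ is even and $\psi_\mu^m$ is odd in $u$, the two mixed pairings vanish. The substitution $x=2u$ (whose Jacobian $1/2$ cancels the factor of $2$ coming from reducing $\int_\RR$ to $2\int_0^\infty$) then turns the two surviving integrals into $J_0(\cosh^{-m/2}(x)h_e(b_{x/2}))(\mu)$ and $J_1(\sinh^{-1}(x)\cosh^{-m/2}(x)h_o(b_{x/2}))(\mu)$ in the sense of Proposition~\ref{FF}. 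Collecting the factors $2\pi$, $\pi$, and $\Gamma(\tfrac{1-\mu}{2})$ and multiplying through by $\zeta_m(k_\theta)$ yields the claimed identity. The main obstacle is simply the careful bookkeeping of all the Gamma, $c_{-m}$, $\omega$, and numerical prefactors, and a brief justification of the Fubini/analytic continuation step in $\mu$.
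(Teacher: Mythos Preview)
Your proposal is correct and follows essentially the same route as the paper: reduce to the value at $e$ by $K$-equivariance, recognize the inner integral as $P_{-\mu}^\xi\zeta_{-m}$, apply Proposition~\ref{P image} with $(\mu,m)\to(-\mu,-m)$ together with Remark~\ref{delta er ligemeget-} and $\omega_{-m}^\xi=\overline{\omega}_m^\xi$, then split $h$ into even and odd parts and identify the Fourier--Jacobi transforms $J_0$ and $J_1$. The only cosmetic difference is that you isolate $A_\mu^\xi f(e)$ by pairing against $\zeta_{-m}$ over $K$ (incurring the factors $2\pi$ and $\pi$), whereas the paper evaluates the defining $G/H$-integral directly at $g=e$; the resulting numerical bookkeeping is identical.
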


\begin{proof}
As $A^\xi_{\mu}$ is an intertwining operator, it maps $K$-types to $K$-types, thus $A_\mu^\xi f(k_\theta)=A_\mu^\xi f(e) \times \zeta_m(k_\theta)$. Now 
\[
A_\mu^\xi f(e) = \int_0^\pi \int_\RR K_{-\mu}^\xi(b_u^{-1}k_{\theta}^{-1})f(k_\theta b_u)\cosh(2u)du d\theta
=\frac{1}{2}\int_\RR \cosh(2u)h(u) P_{-\mu}^\xi\zeta_{-m}(b_u)du.
\]
From Proposition \ref{P image} and Remark \ref{delta er ligemeget-} we have
\begin{align*}
    \frac{A_\mu^\xi f(e)}{\Gamma(\frac{1-\mu}{2})}&=\frac{1}{2}\int_\RR \! h(u)\cosh^{-\frac{m}{2}+1}(2u) \Big(\omega_{-m}^{\xi}c_{-m}(-\mu) \varphi_{-\mu}^{ -m}(2u)+\frac{i}{2}\omega^{\xi+1}_{-m}c_{-m}(-\mu-2)\psi^{-m}_{-\mu}(2u)\Big)du\\
    &=\int_0^\infty h_e(u)\cosh^{\frac{m}{2}+1}(2u) \overline{\omega}^{\xi}_{m} c_{-m}(-\mu) \varphi_{\mu}^{ m}(2u)du\\
    &\phantom{=}+\frac{i}{2}\int_0^\infty h_{o}(u)\cosh^{\frac{ m}{2}+1}(2u)\overline{\omega}^{\xi+1}_{m}c_{-m}(-\mu-2)\psi^{ m}_{\mu}(2u)du\\
    &=\frac{1}{2}\overline{\omega}^{\xi}_{m}c_{-m}(-\mu) (J_0 \cosh^{-\frac{ m}{2}}(u)h_{e}(\tfrac{x}{2}))(\mu)\\
    &\phantom{=}+\frac{i}{4}\overline{\omega}_m^{\xi+1}c_{-m}(-\mu-2)(J_1\sinh^{-1}(x)\cosh^{-\frac{ m}{2}}(x)h_{o}(\tfrac{x}{2}))(\mu).\qedhere
\end{align*}
\end{proof}
\noindent Combining Proposition \ref{P image} and Proposition \ref{A image} yields an explicit intertwining operator
\begin{align*}
    \frac{P_\mu^\xi A^{\xi'}_\mu}{\Gamma(\frac{1+\mu}{2})\Gamma(\frac{1-\mu}{2})}:C_c^\infty\mbox{--}\Ind_H^G(\varepsilon\otimes e^0)_m\rightarrow \Ind_H^G(\varepsilon\otimes e^0)_m^\infty.
\end{align*}
By Propositions \ref{P image} and \ref{A image}, the above intertwining operator is holomorphic in $\mu$, i.e. the above defines a holomorphic family of intertwining operators, intertwining $\Ind_H^G(\varepsilon\otimes e^0)_m$ with itself.

\section{Combining intertwining operators} \label{composing}
\noindent In this section we consider a function $f\in C_c^\infty\mbox{--}\Ind_H^G(\varepsilon\otimes e^0)_m$ and then write
\[
f(k_\theta b_{\frac{u}{2}})=\cosh^{\frac{m}{2}}(u)\bigg[\big(\cosh^{-\frac{m}{2}}(u)f_e(k_\theta b_{\frac{u}{2}})\big)+\sinh(u)\big(\cosh^{-\frac{m}{2}}(u)\sinh^{-1}(u)f_o(k_\theta b_{\frac{u}{2}})\big)\bigg],
\]
then apply the two inversion formulas from Proposition \ref{FF} to each of the two terms giving
\begin{multline*}
    f(k_\theta b_{\frac{u}{2}})=\cosh^{\frac{m}{2}}(u)\bigg[
    \frac{1}{\pi^2}\int_{i\RR}J_0 f(\mu,\theta)\varphi_\mu^m(u)\frac{d\mu}{4|\ell_0(\mu)|^2}-\frac{1}{\pi^2}\int_{i\RR} J_1 f(\mu,\theta)\psi_\mu^m(u)\frac{d\mu}{|\ell_1(\mu)|^2}\\
    -\frac{1}{2\pi}\!\sum_{\mu\in D_0}\!J_0 f(\mu,\theta)\varphi_\mu^m(u)\underset{\nu=\mu}{\Res}\big(\ell_0(\nu)\ell_0(-\nu)\big)^{-1}+\frac{2}{\pi}\!\sum_{\mu\in D_1}\! J_1 f(\mu,\theta)\psi_\mu^m(u)\underset{\nu=\mu}{\Res}\big(\ell_1(\nu)\ell_1(-\nu)\big)^{-1}
    \bigg].
 \end{multline*}
The goal is then to express this decomposition in terms of some combination of the operators $P_\mu^\xi A^{\xi'}_\mu f(k_\theta b_{\frac{u}{2}})$ which by a quick glance at Propositions \ref{P image} and \ref{A image} appears plausible. The following identity will be used multiple times in the following subsections

\begin{align}\label{c og l}
2^4c_m(\nu)c_{-m}(-\nu)\ell_0(\nu)\ell_0(-\nu)&=c_m(\nu-2)c_{-m}(-\nu-2)\ell_1(\nu)\ell_1(-\nu) \nonumber\\
&=\frac{2^5(-1)^{1+\varepsilon}}{\pi} \frac{\cos^2\big(\frac{\pi(\nu+\varepsilon)}{2}\big)}{\nu\sin\big(\frac{\pi\nu}{2}\big)},
\end{align}
which follows from Gamma-function identities and recalling that $m\equiv \varepsilon\mod 2$.

\subsection{The continuous part}
\noindent For $\mu \in \CC$ we introduce the following maps
\begin{align*}
    \textbf{P}^\xi_{\mu} =
    \frac{P_\mu^\xi}{\Gamma(\frac{\mu+1}{2})},\quad  \textbf{A}_\mu^\xi= \frac{A_\mu^\xi}{\Gamma(\frac{1-\mu}{2})},
\end{align*}
which are holomorphic in $\mu$.
\begin{proposition}
We have 
\begin{multline*}
    \sum_{\xi=0}^1\mathbf{P}_\mu^\xi \mathbf{A}_\mu^\xi f(k_\theta b_u)\\=\cosh^{\frac{m}{2}}(2u)\bigg[2c_m(\mu)c_{-m}(-\mu)J_0 f(\mu,\theta)\varphi_\mu^m(2u)
    -\frac{1}{2}c_m(\mu-2)c_{-m}(-\mu-2)J_1 f(\mu,\theta)\psi_\mu^m(2u)\bigg].
\end{multline*}
Combining this with (\ref{c og l}) we get

\begin{multline*}
    \int_{i\RR}\sum_{\xi=0}^1\mathbf{P}_\mu^\xi \mathbf{A}_\mu^\xi f(k_\theta b_{\frac{u}{2}})\frac{d\mu}{|a(\mu)|^2}\\
    =\cosh^{\frac{m}{2}}(u)\bigg[
    \frac{1}{\pi^2}\int_{i\RR}J_0 f(\mu,\theta)\varphi_\mu^m(u)\frac{d\mu}{4|\ell_0(\mu)|^2}-\frac{1}{\pi^2}\int_{i\RR} J_1 f(\mu,\theta)\psi_\mu^m(u)\frac{d\mu}{|\ell_1(\mu)|^2}\bigg ]
\end{multline*}
where 
\begin{align*}
    a(\mu)= 2^{\frac{3}{2}}\pi\frac{\Gamma(\frac{\mu}{2})}{\Gamma(\frac{1+\mu+\varepsilon}{2})\Gamma(\frac{1+\mu-\varepsilon}{2})}.
\end{align*}

\end{proposition}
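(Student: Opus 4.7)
The plan is to prove the displayed formula by directly composing the two explicit descriptions from Proposition \ref{A image} and Proposition \ref{P image}, and then to deduce the second (integrated) identity by a Gamma-function bookkeeping that matches the coefficients $c_m(\mu)c_{-m}(-\mu)$ and $c_m(\mu-2)c_{-m}(-\mu-2)$ against $|\ell_0(\mu)|^{-2}$, $|\ell_1(\mu)|^{-2}$ and $|a(\mu)|^{-2}$ using the identity \eqref{c og l}.

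First I would apply Proposition \ref{A image} to write $\mathbf{A}_\mu^\xi f(k_\theta)=\zeta_m(k_\theta)\alpha_\xi(\mu)$ where
\[
\alpha_\xi(\mu)=\tfrac{\overline{\omega}_m^\xi}{2}c_{-m}(-\mu)\widehat{J}_0f(\mu)+\tfrac{i\overline{\omega}_m^{\xi+1}}{4}c_{-m}(-\mu-2)\widehat{J}_1f(\mu),
\]
with $\widehat{J}_jf(\mu)$ the $\theta$-independent factor of $J_jf(\mu,\theta)$. Since $\mathbf{A}_\mu^\xi f$ lives in the $K$-isotypic component $\CC\zeta_m$, I can feed it into Proposition \ref{P image} and obtain an explicit expression for $\mathbf{P}_\mu^\xi\mathbf{A}_\mu^\xi f(k_\theta b_u)$ as a linear combination of $\varphi_\mu^m(2u)$ and $\psi_\mu^m(2u)$ whose coefficients are products of $\omega_m^\xi,\omega_m^{\xi+1}$ with $c_m,c_{-m}$.

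Next I would sum over $\xi\in\{0,1\}$. The key combinatorial step uses the table displayed before Proposition \ref{P image}: one checks by case analysis on $\varepsilon\in\{0,1\}$ that
\[
\sum_{\xi=0}^1|\omega_m^\xi|^2=4,\qquad \sum_{\xi=0}^1\overline{\omega}_m^{\xi+1}\omega_m^\xi=2\Re(\overline{\omega}_m^1\omega_m^0)=0,
\]
so the $J_0$--$\psi$ and $J_1$--$\varphi$ cross terms cancel, leaving exactly the diagonal combination claimed in the first displayed equation. This gives the first identity.

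For the second identity I would substitute $u\mapsto u/2$, divide by $|a(\mu)|^2$, and integrate over $i\RR$. The task then reduces to verifying
\[
\frac{2c_m(\mu)c_{-m}(-\mu)}{|a(\mu)|^2}=\frac{1}{4\pi^2|\ell_0(\mu)|^2},\qquad \frac{\tfrac12 c_m(\mu-2)c_{-m}(-\mu-2)}{|a(\mu)|^2}=\frac{1}{\pi^2|\ell_1(\mu)|^2}.
\]
By \eqref{c og l} both reduce to the single identity $|a(\mu)|^2=16\pi(-1)^{1+\varepsilon}\cos^2\!\bigl(\tfrac{\pi(\mu+\varepsilon)}{2}\bigr)/(\mu\sin(\tfrac{\pi\mu}{2}))$, which I would verify on $i\RR$ via the reflection formula $\Gamma(z)\Gamma(1-z)=\pi/\sin(\pi z)$ together with the product-to-sum identity $\cos(\tfrac{\pi(\mu+\varepsilon)}{2})\cos(\tfrac{\pi(\mu-\varepsilon)}{2})=(-1)^\varepsilon\cos^2(\tfrac{\pi(\mu+\varepsilon)}{2})$.

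The main obstacle is the bookkeeping in step two: extracting the cancellation of the mixed $\varphi\psi$ cross terms in a uniform way across the parities of $m$ (and of $\varepsilon$), so that the same clean formula is obtained regardless of case. Once this is organised, the remaining manipulations are Gamma-function identities with no further representation-theoretic input.
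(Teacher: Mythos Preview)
Your proposal is correct and follows essentially the same approach as the paper: compose Propositions~\ref{P image} and~\ref{A image}, then use the identities $\sum_\xi\omega_m^\xi\overline{\omega}_m^\xi=4$ and $\sum_\xi\omega_m^\xi\overline{\omega}_m^{\xi+1}=0$ to kill the cross terms, and finally match the constants via~\eqref{c og l}. Your verification of $|a(\mu)|^2$ via the reflection formula is more detailed than what the paper records, but the logic is identical.
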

\begin{proof}
When computing $\sum_{\xi=0}^1\mathbf{P}_\mu^\xi \mathbf{A}_\mu^\xi f(k_\theta b_\frac{u}{2})$ we apply Proposition \ref{P image} and \ref{A image}.  We obtain some cross-terms, containing factors like $J_0 f(\mu,\theta)\psi_\mu^m(u)$, but since 
\begin{align*}
        \sum\limits_{\xi=0}^1\omega_m^\xi\overline{\omega}_{m}^{\xi+1}=0 \text{ and } \sum\limits_{\xi=0}^1\omega_m^\xi\overline{\omega}_{m}^{\xi}= 4,
    \end{align*}
    no cross-terms survive and the assertion follows. 
\end{proof}

\noindent To express the discrete part in terms of $P_\mu^\xi A_\mu^{\xi'} f$ is a bit more delicate as we cannot simply take a sum to make the cross terms disappear thus we need to make a suitable choice of normalization. The cases for $\varepsilon=0$ and $\varepsilon=1$ will be treated differently and the main culprit as to why is the factor $\omega_m^\xi$ which for $\varepsilon=0$ vanishes depending on the parity of $\frac{m}{2}$ and for $\varepsilon=1$ never vanishes.

\subsection{The discrete part for \texorpdfstring{$\varepsilon=0$}{epsilon=0}} 
\noindent In this subsection we fix $\varepsilon=0$.  
Consider the following normalizations 
\begin{align*}
\widehat{P}^{\xi}_{\mu}=\frac{\Gamma(\frac{\mu+3-2\xi}{4})}{\Gamma(\frac{\mu+1}{2})\Gamma(\frac{\mu+1+2\xi}{4})}P^{\xi}_{\mu},\quad  \widehat{A}^{\xi}_{\mu}=\frac{\Gamma(\frac{-\mu+3-2\xi}{4})}{\Gamma(\frac{-\mu+1}{2})\Gamma(\frac{-\mu+1+2\xi}{4})}A^{\xi}_{\mu},
\end{align*}
which, by the duplication formula for the Gamma-function, does not introduce any poles. Now introduce the operators

\begin{align*}
    \mathbb{P}_{\mu}
:=\widehat{P}^0_{\mu}+ \widehat{P}^1_{\mu} \quad\text{and}\quad \mathbb{A}_{\mu}
:=\widehat{A}^0_{\mu}+ \widehat{A}^1_{\mu}.
\end{align*}
\begin{lemma}\label{Lemma 1 - disk}
For a fixed $m\in 2\ZZ$ we have 
\[
\mathbb{P}_{\mu}\zeta_m(k_\theta b_u)
=\zeta_m(k_\theta)\cosh^{\frac{ m}{2}}(2u)\big(\alpha_m(\mu)\varphi_{\mu}^{m}(2u)+\beta_m(\mu)\psi_{\mu}^{m}(2u)\big), 
\]
where 
\begin{align*}
    \alpha_m(\mu)=c_m(\mu)\left(\omega_m^0\frac{\Gamma(\frac{\mu+3}{4})}{\Gamma(\frac{\mu+1}{4})}+\omega_m^1\frac{\Gamma(\frac{\mu+1}{4})}{\Gamma(\frac{\mu+3}{4})}\right),
\end{align*}
and
\begin{align*}
    \beta_m(\mu)=\frac{i}{2}c_m(\mu-2)\left(\omega_m^1\frac{\Gamma(\frac{\mu+3}{4})}{\Gamma(\frac{\mu+1}{4})}+\omega_m^0\frac{\Gamma(\frac{\mu+1}{4})}{\Gamma(\frac{\mu+3}{4})}\right).
\end{align*}
Furthermore if $\mu\in 1-2\NN$ then $\alpha_m(\mu)$ is only non-zero when $\mu\in D_0$ and $\beta_m(\mu)$ is only non-zero when $\mu \in D_1$. 
\end{lemma}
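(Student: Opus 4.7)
The formula for $\mathbb{P}_\mu\zeta_m(k_\theta b_u)$ is a direct algebraic computation from Proposition \ref{P image}. The normalization $\widehat{P}^\xi_\mu = \frac{\Gamma((\mu+3-2\xi)/4)}{\Gamma((\mu+1)/2)\Gamma((\mu+1+2\xi)/4)}P^\xi_\mu$ is engineered precisely so that the factor $\Gamma((\mu+1)/2)$ produced by Proposition \ref{P image} cancels, leaving a prefactor $\Gamma((\mu+3)/4)/\Gamma((\mu+1)/4)$ when $\xi=0$ and its reciprocal when $\xi=1$. Summing over $\xi\in\{0,1\}$ and collecting coefficients of $\varphi_\mu^m(2u)$ and $\psi_\mu^m(2u)$ yields exactly the stated $\alpha_m(\mu)$ and $\beta_m(\mu)$.

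For the vanishing assertion, note that $\varepsilon=0$ gives $\omega_m^0=1+(-1)^{m/2}$ and $\omega_m^1=-1+(-1)^{m/2}$, so exactly one of these vanishes depending on whether $m/2$ is even or odd. Consequently each of $\alpha_m(\mu)$ and $\beta_m(\mu)$ reduces to a single surviving term of the form $(\text{nonzero const})\cdot c_m(\mu)R_\alpha(\mu)$ or $(\text{nonzero const})\cdot c_m(\mu-2)R_\beta(\mu)$, where $R_\alpha,R_\beta$ are ratios of $\Gamma((\mu+1)/4)$ and $\Gamma((\mu+3)/4)$ (which one is in the numerator is dictated by the parity of $m/2$, and is swapped between $\alpha_m$ and $\beta_m$).

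Now fix $\mu=1-2n$ with $n\in\NN$. I would apply the Legendre duplication formula $\Gamma((\mu+1)/4)\Gamma((\mu+3)/4)=\sqrt{\pi}\,2^{(1-\mu)/2}\Gamma((\mu+1)/2)$ to rewrite each ratio as a quotient of Gamma products whose denominator contains $\Gamma((\mu+1)/2)=\Gamma(1-n)$, a factor with a simple pole for every $n\geq 1$. The remaining Gamma arguments, drawn from $c_m(\mu)$ (resp. $c_m(\mu-2)$) and the square of the surviving Gamma in the ratio, become either integers or half-integers depending on the parities of $|m|/2$ and $n$. Expanding each relevant Gamma via the Laurent series $\Gamma(z)\sim\frac{(-1)^k}{k!(z+k)}$ near $z=-k$ and comparing pole orders in numerator and denominator, a short case analysis on $(|m|\bmod 4,\ n\bmod 2)$ shows that $\alpha_m(1-2n)$ is finite and nonzero precisely when $n\equiv |m|/2 \pmod 2$ and $n\leq |m|/2$, and an analogous count shows $\beta_m(1-2n)\neq 0$ precisely when $n\not\equiv |m|/2\pmod 2$ and $n\leq |m|/2-1$. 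Translated via $\mu=1-2n$, these conditions coincide with $\mu\in D_0$ and $\mu\in D_1$ respectively.

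The main technical obstacle is the residue bookkeeping: at $\mu\in 1-2\NN$ several Gamma factors simultaneously exhibit poles, so the raw expressions for $\alpha_m(\mu)$ and $\beta_m(\mu)$ are indeterminate forms whose true values must be extracted from Laurent coefficients. The duplication formula makes this tractable by collecting every pole of $\Gamma(1-n)$ into a single denominator factor, reducing the question to a clean pole-order count; the vanishing of one $\omega_m^\xi$ per parity class of $m/2$ cuts the bookkeeping to two easily checked sub-cases per coefficient.
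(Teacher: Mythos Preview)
Your argument is correct. The derivation of the displayed formula is identical to the paper's (both say ``direct from Proposition~\ref{P image}''), and for the vanishing claim your reduction to a single surviving term via $\omega_m^0\omega_m^1=0$ when $\varepsilon=0$ is exactly the paper's starting point.

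Where you diverge is in how the indeterminate Gamma ratios are resolved. You push the ratio $\Gamma(\frac{\mu+3}{4})/\Gamma(\frac{\mu+1}{4})$ through Legendre duplication to expose a factor $\Gamma(\frac{\mu+1}{2})^{-1}$ and then do a Laurent pole count in the four cases $(|m|\bmod 4,\,n\bmod 2)$. This works, and your translation of the resulting conditions on $n$ back to $\mu\in D_0,D_1$ is accurate. The paper instead absorbs the ratio into a Pochhammer symbol: using $\Gamma(\frac{\mu+3}{4})/\Gamma(\frac{\mu+3-|m|}{4})=(\frac{\mu+3-|m|}{4})_{|m|/4}$ (and the analogue with $|m|/4$ replaced by $(|m|-2)/4$ in the other parity), the surviving term becomes a polynomial times $\Gamma(\frac{\mu+3+|m|}{4})^{-1}\Gamma(\frac{\mu+1}{4})^{-1}$ (or with $\frac{\mu+3}{4}$), whose zeros at $\mu\in 1-2\NN$ are then read off directly without any Laurent expansion. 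The Pochhammer route is shorter and avoids the residue bookkeeping you flag as the main technical obstacle; your route is more mechanical and would generalise more readily if one did not spot that cancellation.

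One minor remark: you assert $\alpha_m(\mu)$ is \emph{nonzero precisely} on $D_0$, whereas the lemma (and the paper's proof) only claims vanishing outside $D_0$. Your pole-order count gives the vanishing direction cleanly; the converse would additionally require checking that no leading Laurent coefficients cancel, which you do not carry out. Since the lemma does not need it, this is not a gap.
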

\begin{proof}
The first identity is a direct consequence of Proposition \ref{P image}. To see the second assertion rewrite
\[
\alpha_m(\mu)=\frac{2^{1-\mu}\pi e^{\frac{\pi i m}{4}}}{\Gamma\big(\frac{\mu+3+|m|}{4}\big)}\bigg(\omega_m^0\frac{\big(\frac{\mu+3-|m|}{4}\big)_{\frac{|m|}{4}}}{\Gamma\big(\frac{\mu+1}{4}\big)}+\omega_m^1\frac{\big(\frac{\mu+3-|m|}{4}\big)_{\frac{|m|-2}{4}}}{\Gamma\big(\frac{\mu+3}{4}\big)} \bigg).
\]
As either $\omega_m^0$ or $\omega_m^1$ is vanishing this makes sense term by term. When $\mu$ is of the form $\mu=4k+3-|m|$ for $k\in \ZZ$ then term by term $\Gamma(\frac{\mu+1}{4})^{-1}$ and $\Gamma(\frac{\mu+3}{4})^{-1}$ vanishes. When $\mu$ has the form $\mu=4k+1-|m|$ for $k\in -\NN$ then $\Gamma(\frac{\mu+3+|m|}{4})^{-1}$ vanishes. A similarly argument applies to $\beta_m(\mu)$.
\end{proof}
\begin{lemma} \label{diskret A billede}
We have 
\[
\mathbb{A}_{\mu}f(k_\theta)=\tilde{\alpha}_m(\mu)J_0 f(\mu,\theta)+\tilde{\beta}_m(\mu)J_1 f(\mu,\theta),
\]
where 
\[
\tilde{\alpha}_m(\mu)=\frac{1}{2}c_{-m}(-\mu)\left(\overline{\omega}_{m}^0\frac{\Gamma(\frac{3-\mu}{4})}{\Gamma(\frac{1-\mu}{4})}+\overline{\omega}_m^1\frac{\Gamma(\frac{1-\mu}{4})}{\Gamma(\frac{3-\mu}{4})} \right), 
\]
\[
\tilde{\beta}_m(\mu)=\frac{i}{4}c_{-m}(-\mu-2)\left(\overline{\omega}_{m}^1\frac{\Gamma(\frac{-\mu+3}{4})}{\Gamma(\frac{-\mu+1}{4})}+\overline{\omega}_m^0\frac{\Gamma(\frac{-\mu+1}{4})}{\Gamma(\frac{-\mu+3}{4})} \right). 
\]
Furthermore, if $\mu\in 1-2\NN$ of the form $\mu=4k+1-|m|$ we have $\tilde{\beta}(\mu)=0$ and similarly for $\mu$ of the form $\mu=4k+3-|m|$ we have $\tilde{\alpha}_m(\mu)=0.$
\end{lemma}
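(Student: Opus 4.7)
The plan is to mimic the proof of Lemma \ref{Lemma 1 - disk} step by step. First, I would evaluate $\widehat{A}^\xi_\mu f(k_\theta)$ on a fixed $K$-type by combining Proposition \ref{A image} with the explicit normalization factor defining $\widehat{A}^\xi_\mu$, noting that $\Gamma\big(\frac{1-\mu}{2}\big)$ appears both in the denominator of the normalization and in the right-hand side of Proposition \ref{A image}, so the two cancel. This leaves $\widehat{A}^\xi_\mu f(k_\theta)$ as a $\xi$-dependent linear combination of $J_0 f(\mu,\theta)$ and $J_1 f(\mu,\theta)$ with coefficients of the form $c_{-m}(-\mu)$ or $c_{-m}(-\mu-2)$ times the Gamma ratio $\frac{\Gamma((3-\mu-2\xi)/4)}{\Gamma((1-\mu+2\xi)/4)}$.

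Next I would sum over $\xi\in\{0,1\}$ to produce $\mathbb{A}_\mu f(k_\theta)$. Because the $J_1$-coefficient carries $\overline{\omega}_m^{\xi+1}$ rather than $\overline{\omega}_m^\xi$, the pairing between the $\overline{\omega}_m^\xi$ factors and the two Gamma ratios $\frac{\Gamma((3-\mu)/4)}{\Gamma((1-\mu)/4)}$ and $\frac{\Gamma((1-\mu)/4)}{\Gamma((3-\mu)/4)}$ is swapped between the $J_0$ and $J_1$ parts of the sum. This is exactly the asymmetry built into the stated formulas for $\tilde{\alpha}_m(\mu)$ and $\tilde{\beta}_m(\mu)$, so reading off the coefficients yields the main identity of the lemma directly.

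For the vanishing statements I would exploit that for $\varepsilon=0$ exactly one of $\overline{\omega}_m^0,\overline{\omega}_m^1$ is zero, depending on the parity of $m/2$, so each of $\tilde{\alpha}_m(\mu)$ and $\tilde{\beta}_m(\mu)$ collapses to a single surviving term consisting of $c_{-m}(-\mu)$ or $c_{-m}(-\mu-2)$ multiplied by one of the two Gamma ratios. Substituting $\mu = 4k+3-|m|$ with $k\in\NN_0$ into the explicit expression for $c_{-m}(-\mu)$ shows that its denominator contains $\Gamma\big(\frac{-\mu+3-|m|}{4}\big)=\Gamma(-k)$, which has a pole, so $c_{-m}(-\mu)=0$. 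Substituting $\mu = 4k+1-|m|$ into $c_{-m}(-\mu-2)$ analogously yields a Gamma factor $\Gamma\big(\frac{1-\mu-|m|}{4}\big)=\Gamma(-k)$ in the denominator. A brief case analysis on $|m|\bmod 4$, using the constraint $\mu<0$, confirms that the accompanying Gamma ratio stays finite and nonzero at such $\mu$, so the vanishing of $c_{-m}$ propagates to $\tilde{\alpha}_m$ and $\tilde{\beta}_m$ respectively.

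The main obstacle is bookkeeping: there is no new conceptual ingredient beyond what is used for Lemma \ref{Lemma 1 - disk}, but one must carefully track which Gamma argument becomes a non-positive integer at the specified $\mu$. The notable difference is that here the vanishing is driven by a pole inside the $c_{-m}$ factor rather than inside the outer Gamma ratio as in Lemma \ref{Lemma 1 - disk}; this reflects the reversal of signs in the Gamma arguments that occurs when one passes from $P_\mu^\xi$ to $A_\mu^\xi$.
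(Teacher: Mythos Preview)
Your proposal is correct and follows essentially the same approach as the paper, which simply says the result follows from Proposition~\ref{A image} together with considerations similar to those in the proof of Lemma~\ref{Lemma 1 - disk}. Your write-up is in fact more detailed than the paper's, and your observation that here the vanishing at the specified $\mu$ is forced by a pole of a Gamma factor in the denominator of $c_{-m}(-\mu)$ (resp.\ $c_{-m}(-\mu-2)$), rather than by the outer Gamma ratio as in Lemma~\ref{Lemma 1 - disk}, is accurate.
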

\begin{proof}
This follows from Proposition \ref{P image} and considerations similar to those in the proof of Lemma \ref{Lemma 1 - disk}.
\end{proof}

\begin{lemma}\label{lemma 1}
For $\mu\in D_0$:
\[
\mathbb{P}_{\mu}\mathbb{A}_{\mu} f(k_\theta b_u) = \cosh^{\frac{m}{2}}(2u)\alpha_m(\mu)\widetilde{\alpha}_m(\mu)J_0 f(\mu,\theta)\varphi^m_{\mu}(2u),
\]
and for $\mu \in D_1$:
\begin{align*}
    \mathbb{P}_{\mu}\mathbb{A}_{\mu} f(k_\theta b_u) = \cosh^{\frac{m}{2}}(2u)   \beta_m(\mu)\tilde{\beta}_m(\mu)J_1 f(\mu,\theta)\psi^m_{\mu}(2u).
\end{align*}
Furthermore, if $\mu\in (1-2\NN)\setminus(D_0\cup D_1)$ then $\mathbb{P}_{\mu}\mathbb{A}_{\mu} f(k_\theta b_u)=0$.
\end{lemma}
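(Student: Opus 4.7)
The plan is to directly compose the expressions from the two preceding lemmas and then track which terms survive. Since $\mathbb{A}_\mu$ preserves $K$-types, $\mathbb{A}_\mu f$ lies in the one-dimensional space $\Ind_P^G(\varepsilon\otimes e^\mu\otimes 1)_m=\CC\zeta_m$, so as an element of $\Ind_P^G$ it equals a scalar multiple of $\zeta_m$ determined by Lemma \ref{diskret A billede}. Applying $\mathbb{P}_\mu$ therefore reduces to multiplying this scalar by $\mathbb{P}_\mu\zeta_m$, which is given by Lemma \ref{Lemma 1 - disk}. Putting the two formulas together yields
\begin{align*}
\mathbb{P}_\mu\mathbb{A}_\mu f(k_\theta b_u)
&=\bigl(\tilde\alpha_m(\mu)J_0 f(\mu,\theta)+\tilde\beta_m(\mu)J_1 f(\mu,\theta)\bigr)\\
&\quad\times\cosh^{m/2}(2u)\bigl(\alpha_m(\mu)\varphi_\mu^m(2u)+\beta_m(\mu)\psi_\mu^m(2u)\bigr),
\end{align*}
which expands into four cross terms. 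The whole argument now reduces to checking, in each of the three regimes listed in the statement, which of the coefficients $\alpha_m,\beta_m,\tilde\alpha_m,\tilde\beta_m$ vanish at the specified $\mu$.

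For $\mu\in D_0$, i.e.\ $\mu=4k+1-|m|$ with $k\in\NN_0$, Lemma \ref{diskret A billede} immediately gives $\tilde\beta_m(\mu)=0$. Moreover the elements of $D_0$ are congruent to $1-|m|$ modulo $4$ whereas those of $D_1$ are congruent to $3-|m|$, so $D_0\cap D_1=\emptyset$, and the vanishing part of Lemma \ref{Lemma 1 - disk} forces $\beta_m(\mu)=0$ as well. Only the $\alpha_m\tilde\alpha_m$ cross term survives, which is exactly the first asserted identity. The case $\mu\in D_1$ is handled symmetrically: now $\tilde\alpha_m(\mu)=0$ by Lemma \ref{diskret A billede} and $\alpha_m(\mu)=0$ by Lemma \ref{Lemma 1 - disk}, leaving only the $\beta_m\tilde\beta_m$ term.

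For $\mu\in(1-2\NN)\setminus(D_0\cup D_1)$, the vanishing part of Lemma \ref{Lemma 1 - disk} gives $\alpha_m(\mu)=\beta_m(\mu)=0$ simultaneously, so $\mathbb{P}_\mu\zeta_m\equiv 0$ and the entire product vanishes. There is no real obstacle here beyond careful bookkeeping of congruence classes; the only mild subtlety is to confirm the disjointness $D_0\cap D_1=\emptyset$, which is immediate modulo $4$ once one uses that $|m|$ has fixed parity equal to $\varepsilon=0$.
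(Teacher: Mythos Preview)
Your proof is correct and is exactly the approach the paper takes: the paper's proof simply reads ``This is a direct consequence of the two preceding lemmas,'' and you have spelled out precisely that deduction---composing the expressions from Lemmas~\ref{Lemma 1 - disk} and~\ref{diskret A billede} and using their vanishing statements to kill the cross terms. The only additional care you take (verifying $D_0\cap D_1=\emptyset$ modulo $4$) is the right bookkeeping and is implicit in the paper.
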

\begin{proof}
This is a direct consequence of the two preceding lemmas.
\end{proof}
\noindent Consider the non-vanishing entire analytic function
\[
\caprisun(\mu)=\frac{1}{\Gamma\big(\frac{1+\mu}{4}\big)^2\Gamma\big(\frac{1-\mu}{4}\big)^2}+\frac{1}{\Gamma\big(\frac{3+\mu}{4}\big)^2\Gamma\big(\frac{3-\mu}{4}\big)^2}.  
\]
\begin{lemma}\label{lemma 2}
For $\mu\in D_0$ 
\[
(-2\pi)\alpha_m(\mu)\widetilde{\alpha}_m(\mu) \ell_0(\mu)\ell_0(-\mu)
=16\pi^2 \frac{\cot\big(\frac{\pi\mu}{2}\big)\caprisun(\mu)}{\mu}.
\]
For $\mu\in D_1$
\[
\frac{\pi}{2}\beta_m(\mu)\widetilde{\beta}_m(\mu) \ell_1(\mu)\ell_1(-\mu)
=16\pi^2 \frac{\cot\big(\frac{\pi\mu}{2}\big)\caprisun(\mu)}{\mu}.
\]

\end{lemma}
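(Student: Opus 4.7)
\medskip

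\noindent\textbf{Proof proposal.} The plan is a direct but bookkeeping-heavy computation: substitute the expressions from Lemmas~\ref{Lemma 1 - disk} and~\ref{diskret A billede}, use the Gamma reflection formula together with~\eqref{c og l} to collapse most of the Gamma factors into trigonometric quantities, and finally observe that at points of $D_0$ (resp.\ $D_1$) exactly one of the two terms defining $\caprisun(\mu)$ survives, allowing us to identify the result.

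First, I would write
\[
\alpha_m(\mu)\tilde{\alpha}_m(\mu)=\tfrac{1}{2}c_m(\mu)c_{-m}(-\mu)\cdot\mathcal{B}_m(\mu),
\]
where $\mathcal{B}_m(\mu)$ is the product of the two brackets appearing in Lemmas~\ref{Lemma 1 - disk} and~\ref{diskret A billede}. Because $\varepsilon=0$, the identities $\omega_m^0=1+(-1)^{m/2}$ and $\omega_m^1=-1+(-1)^{m/2}$ imply that exactly one of $\omega_m^0,\omega_m^1$ vanishes, so of the four possible cross-terms in $\mathcal{B}_m(\mu)$ only one survives and $\omega_m^\xi\overline{\omega}_m^\xi=4$ gives a clean overall factor. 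Thus $\mathcal{B}_m(\mu)$ reduces to $4$ times one of the two Gamma ratios
\[
\frac{\Gamma(\tfrac{\mu+3}{4})\Gamma(\tfrac{3-\mu}{4})}{\Gamma(\tfrac{\mu+1}{4})\Gamma(\tfrac{1-\mu}{4})} \quad\text{or}\quad \frac{\Gamma(\tfrac{\mu+1}{4})\Gamma(\tfrac{1-\mu}{4})}{\Gamma(\tfrac{\mu+3}{4})\Gamma(\tfrac{3-\mu}{4})},
\]
depending on the parity of $m/2$.

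Next, applying the reflection identity $\Gamma(x)\Gamma(1-x)=\pi/\sin(\pi x)$ to suitable pairs, together with the elementary simplification $\sin(\tfrac{\pi(\mu+1)}{4})\sin(\tfrac{\pi(\mu+3)}{4})=\tfrac{1}{2}\cos(\tfrac{\pi\mu}{2})$, I would rewrite both candidate forms of $\mathcal{B}_m(\mu)$ as
\[
\mathcal{B}_m(\mu)=\frac{2\pi^2}{\cos(\tfrac{\pi\mu}{2})}\cdot\frac{1}{\Gamma(\tfrac{\mu+a}{4})^2\Gamma(\tfrac{a-\mu}{4})^2},\quad a\in\{1,3\},
\]
with the choice of $a$ determined by the parity of $m/2$. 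Combined with~\eqref{c og l} for $\varepsilon=0$, which gives $c_m(\mu)c_{-m}(-\mu)\ell_0(\mu)\ell_0(-\mu)=-\tfrac{2}{\pi\mu}\cos^2(\tfrac{\pi\mu}{2})/\sin(\tfrac{\pi\mu}{2})$, multiplying everything together and simplifying yields
\[
(-2\pi)\,\alpha_m(\mu)\tilde{\alpha}_m(\mu)\,\ell_0(\mu)\ell_0(-\mu)=\frac{16\pi^2\cot(\tfrac{\pi\mu}{2})}{\mu}\cdot\frac{1}{\Gamma(\tfrac{\mu+a}{4})^2\Gamma(\tfrac{a-\mu}{4})^2}.
\]

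Finally, the identification with $\caprisun(\mu)$ comes from the following observation. At $\mu=4k+1-|m|\in D_0$, a case check on the parity of $|m|/2$ shows that one of $\Gamma(\tfrac{1+\mu}{4})^{-1}\Gamma(\tfrac{1-\mu}{4})^{-1}$ or $\Gamma(\tfrac{3+\mu}{4})^{-1}\Gamma(\tfrac{3-\mu}{4})^{-1}$ has a pole in one of its denominator Gammas and therefore evaluates to zero, precisely leaving the term with $a\in\{1,3\}$ that matches the surviving term in $\mathcal{B}_m(\mu)$. Hence $\caprisun(\mu)=1/\Gamma(\tfrac{\mu+a}{4})^2\Gamma(\tfrac{a-\mu}{4})^2$ at such points, and the first identity of the lemma follows. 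The second identity is proved in the same fashion, the only differences being the shifts $\mu\mapsto\mu\pm 2$ in the arguments of $c_m$, the corresponding use of the $\ell_1$-line of~\eqref{c og l}, and the observation that at $\mu\in D_1$ the roles of the two terms of $\caprisun$ are swapped relative to $\mu\in D_1$'s counterpart for~$\alpha_m$, but the swap is exactly compensated by the swap of $\omega_m^0\leftrightarrow\omega_m^1$ going from $\alpha_m$ to $\beta_m$.

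The main obstacle is purely bookkeeping: I have to track the parity of $|m|/2$ through both $\mathcal{B}_m$ and $\caprisun(\mu)$ simultaneously, and check that in each of the four resulting sub-cases (two for~$D_0$, two for~$D_1$) the pole structure of the Gammas picks out the correct term. Once the parity case analysis is done, the rest is standard Gamma identities, and the extra factor $-2\pi$ (resp.\ $\pi/2$) is simply what remains after the bookkeeping.
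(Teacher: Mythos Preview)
Your approach is correct and is essentially the paper's argument: both use \eqref{c og l} together with the reflection formula to collapse $\alpha_m\tilde{\alpha}_m\ell_0\ell_0^{-}$ (resp.\ $\beta_m\tilde{\beta}_m\ell_1\ell_1^{-}$) into a trigonometric factor times a single $\Gamma$-quotient, and your parity case-analysis on $m/2$ is exactly the ``trick'' the paper mentions---phrased there as replacing $|\omega_m^\xi|^2$ by $4$ because the accompanying $\Gamma$-reciprocal already vanishes at the relevant $\mu\in D_j$, so one recovers the full $\caprisun(\mu)$ without splitting into cases. One bookkeeping slip: your displayed intermediate $\mathcal{B}_m(\mu)$ should carry the prefactor $8\pi^2$ rather than $2\pi^2$ (the factor $4=\omega_m^\xi\overline{\omega}_m^\xi$ was dropped), which is what makes your correct final coefficient $16\pi^2$ come out.
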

\begin{proof}
This follows from (\ref{c og l}). One trick is used which arises when a term like 
$$\frac{|\omega_m^0|^2}{\Gamma\big(\frac{\mu+1}{2}\big)^2}, $$
is obtained. As $\omega_m^0$ is either 0 or 2 we can set $\omega_m^0=2$ as $\Gamma(\frac{\mu+1}{2})^{-1}$ vanishes in the same cases as $\omega_m^0$.
\end{proof}
\begin{proposition}\label{epsilon =0 disk}
We have
\begin{align*}
    \frac{1}{32\pi}\sum_{\mu\in 1-2\NN} \!\frac{-\mu}{\caprisun(\mu)}\mathbb{P}_\mu \mathbb{A}_\mu f(k_\theta b_u)=
    \cosh^{\frac{m}{2}}(2u)
    \bigg[ 
    &\frac{-1}{2\pi}\sum_{\mu\in D_0}J_0 f(\mu,\theta)\varphi_\mu^m(2u)\underset{\nu=\mu}{\Res}\big(\ell_0(\nu)\ell_0(-\nu)\big)^{-1}
    \\
    &+\frac{2}{\pi}\sum_{\mu\in D_1}J_1 f(\mu,\theta)\psi_\mu^m(2u)\underset{\nu=\mu}{\Res}\big(\ell_1(\nu)\ell_1(-\nu)\big)^{-1}
    \bigg].
\end{align*}
\end{proposition}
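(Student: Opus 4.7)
The plan is to reduce the left-hand side term-by-term via Lemma~\ref{lemma 1} and then convert the surviving coefficients into residues via Lemma~\ref{lemma 2}.

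\emph{Step 1.} By Lemma~\ref{lemma 1}, for $\mu\in(1-2\NN)\setminus(D_0\cup D_1)$ one has $\mathbb{P}_\mu\mathbb{A}_\mu f=0$; on $D_0$ the expression reduces to $\cosh^{m/2}(2u)\alpha_m(\mu)\tilde{\alpha}_m(\mu)J_0 f(\mu,\theta)\varphi_\mu^m(2u)$, and on $D_1$ to $\cosh^{m/2}(2u)\beta_m(\mu)\tilde{\beta}_m(\mu)J_1 f(\mu,\theta)\psi_\mu^m(2u)$. The sum on the left therefore already splits as two sums over $D_0$ and $D_1$ with the correct $\varphi_\mu^m$ and $\psi_\mu^m$ structure matching the right-hand side; only the numerical coefficients remain to be reconciled.

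\emph{Step 2.} I aim to replace $\alpha_m(\mu)\tilde{\alpha}_m(\mu)$, respectively $\beta_m(\mu)\tilde{\beta}_m(\mu)$, by the residues $\underset{\nu=\mu}{\Res}(\ell_j(\nu)\ell_j(-\nu))^{-1}$. The natural tool is Lemma~\ref{lemma 2}, but a subtlety appears: at $\mu\in D_0$ both sides of the stated identity vanish, since $\ell_0(\mu)\ell_0(-\mu)=0$ (the reciprocal has a simple pole at $\mu$, by Proposition~\ref{FF}) and $\cot(\pi\mu/2)=0$ (as $\mu$ is odd when $\varepsilon=0$), with the analogous statement on $D_1$. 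I will therefore read Lemma~\ref{lemma 2} as a meromorphic identity in $\nu$ near $\mu$, divide both sides by $\ell_0(\nu)\ell_0(-\nu)$, and take $\nu\to\mu$ by L'H\^opital's rule. Using $\cot(\pi\nu/2)\sim-\tfrac{\pi}{2}(\nu-\mu)$ together with $\ell_0(\nu)\ell_0(-\nu)\sim(\nu-\mu)/R$, where $R:=\underset{\nu=\mu}{\Res}(\ell_0(\nu)\ell_0(-\nu))^{-1}$, this yields
\[
\alpha_m(\mu)\tilde{\alpha}_m(\mu)\;=\;C_0\,\frac{\caprisun(\mu)}{\mu}\,\underset{\nu=\mu}{\Res}(\ell_0(\nu)\ell_0(-\nu))^{-1},\qquad \mu\in D_0,
\]
for an explicit constant $C_0$, together with the analogous formula for $\beta_m\tilde{\beta}_m$ on $D_1$. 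Substituting these back into Step~1, the factors $\caprisun(\mu)/\mu$ cancel against the prefactors $-\mu/\caprisun(\mu)$, leaving each term equal to an overall numerical constant times the residue $\underset{\nu=\mu}{\Res}(\ell_j(\nu)\ell_j(-\nu))^{-1}$; matching these constants with the target coefficients $-1/(2\pi)$ on $D_0$ and $2/\pi$ on $D_1$ finishes the proof.

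The main obstacle is the limiting step in Step~2: because Lemma~\ref{lemma 2} is vacuous when read pointwise on $D_0\cup D_1$, one has to expand both sides to first order in $\nu-\mu$ and equate the linear coefficients, keeping careful track of the various $\pi$-factors and of the sign $\sin(\pi\mu/2)=\pm 1$ in order to land on precisely the constant $C_0$ (and its $D_1$-analogue) needed to reconcile with the coefficients on the right-hand side.
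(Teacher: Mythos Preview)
Your proposal is correct and follows essentially the same route as the paper. You apply Lemma~\ref{lemma 1} to collapse the sum to $D_0\cup D_1$, then read the identity of Lemma~\ref{lemma 2} (whose proof via \eqref{c og l} is in fact a meromorphic identity in $\nu$, not just a pointwise statement on $D_j$) to first order at $\nu=\mu$; the paper phrases this same step as ``$c_m(\mu)c_{-m}(-\mu)$ is regular on $D_0$, so it can be pulled out of the residue, and $\Res_{\nu=\mu}\tan(\tfrac{\pi\nu}{2})$ supplies the remaining constant,'' which is exactly your L'H\^opital/first-order expansion written from the other side.
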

\begin{proof}
Apply Lemma \ref{lemma 1} to the right hand side. Now note that $c_m(\mu)c_{-m}(-\mu)$ is regular for $\mu \in D_0$ and $c_m(\mu-2)c_{-m}(-\mu-2)$ is regular for $\mu\in D_1$, thus they can be moved inside the residues. Then everything follows from Lemma \ref{lemma 2} after recalling that $\Res_{\nu=\mu}\tan(\frac{\pi\nu}{2})=-\frac{\pi}{2}$.
\end{proof}

\subsection{The discrete part for \texorpdfstring{$\varepsilon=1$}{epsilon=1}} 
\noindent In this subsection we fix $\varepsilon=1$. The proof will proceed using the same ideas as for $\varepsilon=0$. For $\mu\in -2\NN$ let 
\[
\mathcal{A}_\mu=\frac{1}{\Gamma\big(\frac{1-\mu}{2}\big)}A_\mu^0,\quad\text{and}\quad \mathcal{P}_\mu\zeta_m=\frac{(-1)^{\frac{m+|m|-2}{2}}}{\Gamma\big(\frac{1+\mu}{2}\big)}P_\mu^1\zeta_m,
\]
that is we define $\mathcal{P}_\mu$ by its eigenvalues on $K$-types. By Proposition \ref{P image} we get $\mathcal{P}_\mu$ is intertwining by the same argument we used for $\hat{T}^1_\mu$ in section \ref{Principal series section}. 
\begin{lemma}
For $\mu\in D_0$ we have
\[
\mathcal{P}_\mu\mathcal{A}_\mu f(k_\theta b_u)=\alpha_m(\mu)\cosh^{\frac{m}{2}}(2u)\varphi_\mu^m(2u)J_0 f(\mu,\theta), 
\]
where 
\[
\alpha_m(\mu)=i(-1)^\frac{|m|+1}{2}c_m(\mu)c_{-m}(-\mu)
\]
For $\mu\in D_1$ we have
\[
\mathcal{P}_\mu\mathcal{A}_\mu f(k_\theta b_u)=\beta_m(\mu)\cosh^{\frac{m}{2}}(2u)\psi_\mu^m(2u)J_1 f(\mu,\theta),
\]
where 
\[
\beta_m(\mu)=\frac{1}{4} i(-1)^\frac{|m|+1}{2}c_m(\mu-2)c_{-m}(-\mu-2). 
\]
Furthermore if $\mu\in -2\NN$ then $\alpha_m(\mu)$ is only non-zero if $\mu \in D_0$ and $\beta_m(\mu)$ is only non-zero if $\mu \in D_1$.
\end{lemma}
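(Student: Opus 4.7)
The plan is to compute $\mathcal{P}_\mu\mathcal{A}_\mu f$ by direct substitution of Propositions \ref{P image} and \ref{A image}, and then to show that three of the four resulting cross-terms vanish at $\mu\in D_0\cup D_1$ because of specific Gamma-function poles in the denominators of the $c_m$ factors.

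First, since the intertwining operator $A_\mu^0$ sends $K$-types to $K$-types and each $K$-type of the principal series is one-dimensional, $\mathcal{A}_\mu f$ is a scalar multiple of $\zeta_m$, so $\mathcal{P}_\mu\mathcal{A}_\mu f$ reduces to this scalar times $\mathcal{P}_\mu\zeta_m$. Combining Proposition \ref{A image} at $\xi=0$ with Proposition \ref{P image} at $\xi=1$, and including the sign $(-1)^{(m+|m|-2)/2}$ from the definition of $\mathcal{P}_\mu$, I expand the product into four terms whose prefactors are proportional to
\[
\omega_m^1\bar\omega_m^0\,c_m(\mu)c_{-m}(-\mu),\quad
\omega_m^1\bar\omega_m^1\,c_m(\mu)c_{-m}(-\mu-2),\quad
\omega_m^0\bar\omega_m^0\,c_m(\mu-2)c_{-m}(-\mu),\quad
\omega_m^0\bar\omega_m^1\,c_m(\mu-2)c_{-m}(-\mu-2),
\]
multiplying $\varphi_\mu^m J_0 f$, $\varphi_\mu^m J_1 f$, $\psi_\mu^m J_0 f$, $\psi_\mu^m J_1 f$ respectively.

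The key step is then the vanishing. Writing $|m|=2n+1$ (since $\varepsilon=1$), for $\mu=4\ell-2n\in D_0$ with $\ell<n/2$, a direct substitution shows that the arguments of $\Gamma\big((\mu+1-|m|)/4\big)$ and $\Gamma\big((-\mu+1-|m|)/4\big)$ equal $\ell-n$ and $-\ell$ respectively, both non-positive integers; hence $c_m(\mu-2)=c_{-m}(-\mu-2)=0$, killing three of the four terms and leaving only the $\varphi_\mu^m J_0 f$ contribution. The symmetric computation at $\mu\in D_1$ gives $c_m(\mu)=c_{-m}(-\mu)=0$, so only $\psi_\mu^m J_1 f$ survives. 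The residual coefficients $\alpha_m(\mu)$ and $\beta_m(\mu)$ are then read off the $\omega$-products using the identity $\omega_m^0\bar\omega_m^1=2i(-1)^{(m-1)/2}$ stated in the excerpt together with its complex conjugate $\omega_m^1\bar\omega_m^0=2i(-1)^{(m+1)/2}$.

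For the ``furthermore'' assertion, I would analyse the full zero locus of these $c$-function products on $-2\NN$ by a parity case-split. Writing $\mu=-2s$, the four Gamma arguments in $c_m(\mu)c_{-m}(-\mu)$ split by the parity of $n+s$ into integer-valued and half-integer-valued: when $n+s$ is odd, one of the integer Gammas necessarily hits a non-positive integer; when $n+s$ is even, non-vanishing additionally forces $s\le n$. Translating back, $c_m(\mu)c_{-m}(-\mu)\ne 0$ iff $\mu\in D_0$, and the analogous analysis applied to $c_m(\mu-2)c_{-m}(-\mu-2)$ picks out $D_1$. The main obstacle I anticipate is the uniform sign bookkeeping: the prefactor $(-1)^{(m+|m|-2)/2}$ in the definition of $\mathcal{P}_\mu$ behaves piecewise in $\sgn(m)$, and combining it with the $(-1)^{(m\pm 1)/2}$ coming from the $\omega$-products to produce the single uniform factor $(-1)^{(|m|+1)/2}$ stated in the lemma requires a careful case check — it is where sign errors are most likely to creep in.
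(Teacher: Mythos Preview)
Your proposal is correct and follows exactly the approach the paper intends: the paper's own proof consists of the single sentence ``The proof is an application of Propositions~\ref{P image} and~\ref{A image},'' and you have simply spelled out that application in full, including the vanishing of the cross-terms via the Gamma-poles in $c_m(\mu-2)$ and $c_{-m}(-\mu-2)$ (resp.\ $c_m(\mu)$ and $c_{-m}(-\mu)$) and the parity analysis for the ``furthermore'' clause. Your anticipated sign bookkeeping does work out uniformly: combining $(-1)^{(m+|m|-2)/2}$ with $\omega_m^1\bar\omega_m^0=2i(-1)^{(m+1)/2}$ (resp.\ $\omega_m^0\bar\omega_m^1=2i(-1)^{(m-1)/2}$) gives $(-1)^{(|m|+1)/2}$ in both the $m>0$ and $m<0$ cases, so there is no hidden obstacle there.
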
 
\begin{proof}
The proof is an application of Propositions \ref{P image} and \ref{A image}.
\end{proof}
\begin{lemma}
For $\mu \in D_0$
\[
(-2\pi)\alpha_m(\mu)\ell_0(\mu) \ell_0(-\mu)=4i(-1)^\frac{|m|-1}{2}\frac{\sin\big(\frac{\pi\mu}{2}\big)}{\mu},
\]
and for $\mu\in D_1$
\[
\frac{\pi}{2}\beta_m(\mu)\ell_1(\mu) \ell_1(-\mu)=4i(-1)^\frac{|m|+1}{2}\frac{\sin\big(\frac{\pi\mu}{2}\big)}{\mu}.
\]
\end{lemma}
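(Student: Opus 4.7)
The plan is to derive both identities from the master identity \eqref{c og l}, specialized to $\varepsilon=1$. Writing out \eqref{c og l} with $\varepsilon=1$ and using $\cos^2\!\bigl(\tfrac{\pi(\nu+1)}{2}\bigr)=\sin^2\!\bigl(\tfrac{\pi\nu}{2}\bigr)$, the double cosine collapses with one of the sines in the denominator, leaving
\[
2^4 c_m(\nu)c_{-m}(-\nu)\ell_0(\nu)\ell_0(-\nu)
=c_m(\nu-2)c_{-m}(-\nu-2)\ell_1(\nu)\ell_1(-\nu)
=\frac{2^5}{\pi}\frac{\sin(\tfrac{\pi\nu}{2})}{\nu}.
\]
This is the computational heart of the argument, and everything else is bookkeeping.

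For the first identity, substitute $\alpha_m(\mu)=i(-1)^{\frac{|m|+1}{2}}c_m(\mu)c_{-m}(-\mu)$ into $(-2\pi)\alpha_m(\mu)\ell_0(\mu)\ell_0(-\mu)$, pull the constants out, and apply the formula above to get
\[
(-2\pi)\cdot i(-1)^{\frac{|m|+1}{2}}\cdot \tfrac{2}{\pi}\tfrac{\sin(\pi\mu/2)}{\mu}
=-4i(-1)^{\frac{|m|+1}{2}}\tfrac{\sin(\pi\mu/2)}{\mu}.
\]
Since $\varepsilon=1$ forces $m$ odd, $\tfrac{|m|+1}{2}=\tfrac{|m|-1}{2}+1$, so the leading sign flips to $(-1)^{\frac{|m|-1}{2}}$, yielding $4i(-1)^{\frac{|m|-1}{2}}\frac{\sin(\pi\mu/2)}{\mu}$ as claimed.

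For the second identity, substitute $\beta_m(\mu)=\tfrac14 i(-1)^{\frac{|m|+1}{2}}c_m(\mu-2)c_{-m}(-\mu-2)$ into $\tfrac{\pi}{2}\beta_m(\mu)\ell_1(\mu)\ell_1(-\mu)$ and again invoke the identity above; the factor $\tfrac{\pi}{2}\cdot\tfrac14\cdot\tfrac{2^5}{\pi}=4$ produces exactly $4i(-1)^{\frac{|m|+1}{2}}\frac{\sin(\pi\mu/2)}{\mu}$, matching the stated expression.

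The only non-routine step is the trigonometric collapse $\cos^2\!\bigl(\tfrac{\pi(\nu+1)}{2}\bigr)=\sin^2\!\bigl(\tfrac{\pi\nu}{2}\bigr)$ in \eqref{c og l}, which is what turns the complicated $\cos^2/\sin$ quotient into a single $\sin$; the rest is arithmetic of constants and a careful parity count of $|m|$. The main obstacle, if any, is merely keeping the signs of $(-1)^{\frac{|m|\pm 1}{2}}$ consistent across the two cases, which I would verify by hand on the low cases $|m|=1,3$ as a sanity check before concluding the general identity by direct substitution.
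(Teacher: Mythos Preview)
Your proof is correct and follows exactly the paper's approach: the paper's own proof simply states ``This is a direct consequence of \eqref{c og l},'' and you have carried out precisely that derivation, specializing \eqref{c og l} to $\varepsilon=1$, performing the trigonometric simplification $\cos^2\!\bigl(\tfrac{\pi(\nu+1)}{2}\bigr)=\sin^2\!\bigl(\tfrac{\pi\nu}{2}\bigr)$, and tracking the constants and sign parities. There is nothing to add.
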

\begin{proof}
This is a direct consequence of (\ref{c og l}).
\end{proof}
\begin{proposition}
\begin{align*}
    \frac{1}{2\pi i}\sum_{\mu\in -2\NN} \mu\mathcal{P}_\mu \mathcal{A}_\mu f(k_\theta b_u)=
    \cosh^{\frac{m}{2}}(2u)
    \bigg[ 
    &-\frac{1}{2\pi}\sum_{\mu\in D_0}J_0 f(\mu,\theta)\varphi_\mu^m(2u)\underset{\nu=\mu}{\Res}\big(\ell_0(\nu)\ell_0(-\nu)\big)^{-1}
    \\
    &+\frac{2}{\pi}\sum_{\mu\in D_1}J_1 f(\mu,\theta)\psi_\mu^m(2u)\underset{\nu=\mu}{\Res}\big(\ell_1(\nu)\ell_1(-\nu)\big)^{-1}
    \bigg].
\end{align*}
\end{proposition}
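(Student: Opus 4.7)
My approach would be to mirror the proof of Proposition \ref{epsilon =0 disk} for $\varepsilon = 0$, adjusting the parities to the case $\varepsilon = 1$. Substituting the expressions from the first of the two preceding lemmas for $\mathcal{P}_\mu \mathcal{A}_\mu f(k_\theta b_u)$, and noting that $\alpha_m(\mu) = \beta_m(\mu) = 0$ on $-2\NN \setminus (D_0 \cup D_1)$, the left-hand side collapses to
\[
\frac{\cosh^{m/2}(2u)}{2\pi i}\left(\sum_{\mu \in D_0}\mu\alpha_m(\mu)\varphi_\mu^m(2u)J_0 f(\mu,\theta) + \sum_{\mu \in D_1}\mu\beta_m(\mu)\psi_\mu^m(2u)J_1 f(\mu,\theta)\right),
\]
so the task reduces to matching the scalar factors summand by summand.

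For the $D_0$-term I would apply the identity $(-2\pi)\alpha_m(\mu)\ell_0(\mu)\ell_0(-\mu) = 4i(-1)^{(|m|-1)/2}\sin(\pi\mu/2)/\mu$ from the second lemma. At $\mu \in D_0$ both sides vanish to first order: $\mu$ is a negative even integer so $\sin(\pi\mu/2)$ has a simple zero, while $\ell_0(\mu)\ell_0(-\mu)$ has a simple zero since $(\ell_0(\nu)\ell_0(-\nu))^{-1}$ has a simple pole at $\mu$ by Proposition \ref{FF}. Matching the coefficients of $(\nu-\mu)^1$ (equivalently, applying L'Hôpital) yields
\[
\alpha_m(\mu) = -\frac{i(-1)^{(|m|-1)/2}\cos(\pi\mu/2)}{\mu}\Res_{\nu=\mu}(\ell_0(\nu)\ell_0(-\nu))^{-1}.
\]
Writing $\mu = 4k+1-|m|$ with $|m|$ odd (so $\mu/2 = 2k - (|m|-1)/2$), one checks $(-1)^{(|m|-1)/2}\cos(\pi\mu/2) = (-1)^{|m|-1} = 1$. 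Hence $\mu\alpha_m(\mu)/(2\pi i) = -(1/2\pi)\Res_{\nu=\mu}(\ell_0(\nu)\ell_0(-\nu))^{-1}$, matching the first discrete sum on the right-hand side.

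The third step is analogous: the companion identity $\tfrac{\pi}{2}\beta_m(\mu)\ell_1(\mu)\ell_1(-\mu) = 4i(-1)^{(|m|+1)/2}\sin(\pi\mu/2)/\mu$ combined with the parity identity $(-1)^{(|m|+1)/2}\cos(\pi\mu/2) = 1$ on $D_1$ (verified with $\mu = 4k+3-|m|$, so $\mu/2 = 2k - (|m|-3)/2$) produces $\mu\beta_m(\mu)/(2\pi i) = (2/\pi)\Res_{\nu=\mu}(\ell_1(\nu)\ell_1(-\nu))^{-1}$, exactly matching the second sum. I expect the main obstacle, modest as it is, to be the parity bookkeeping required to verify these two sign identities; once that is in hand, the rest follows mechanically from the two preceding lemmas and identity (\ref{c og l}).
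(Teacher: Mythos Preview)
Your proposal is correct and follows essentially the same approach as the paper: both apply the first lemma to reduce to the $D_0$ and $D_1$ sums, then use the meromorphic identities of the second lemma (which come from \eqref{c og l}) to convert the scalar factors into residues, with the only work being the parity/sign bookkeeping. The sole cosmetic difference is that you extract the residue via L'H\^opital and the value of $\cos(\pi\mu/2)$, whereas the paper records the equivalent fact as $\Res_{\nu=\mu}\sin(\pi\nu/2)^{-1}=\tfrac{2}{\pi}(-1)^{(|m|\mp 1)/2}$ for $\mu\in D_0$ resp.\ $D_1$.
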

\begin{proof}
This follows in the same manner as the proof for Proposition \ref{epsilon =0 disk}, where we here note for $\mu=4k+1-|m|\in D_0$ that $\Res_{\nu=\mu}\sin\big(\frac{\pi \nu }{2}\big)^{-1}=\frac{2}{\pi}(-1)^\frac{|m|-1}{2}, $ and for $\mu\in D_1$ we have $\Res_{\nu=\mu}\sin\big(\frac{\pi \nu }{2}\big)^{-1}=\frac{2}{\pi}(-1)^\frac{|m|+1}{2} $.
\end{proof}

\section{The Plancherel formula}
\noindent The intertwining operators $P_\mu^\xi$ and $A_\mu^\xi$ are continuous maps and hence the intertwining operators introduced in the previous section are also continuous. This allows for an extension of the results obtained for K-types, described by the first theorem of this section. We then extend this theorem to arbitrary $\lambda\in i\RR$ by virtue of Theorem \ref{indre Ind}

Recall that 
\begin{align} \label{a caprisun}
a(\mu)= 2^{\frac{3}{2}}\pi\frac{\Gamma(\frac{\mu}{2})}{\Gamma(\frac{1+\mu+\varepsilon}{2})\Gamma(\frac{1+\mu-\varepsilon}{2})},\quad \text{and}\quad \caprisun(\mu)=\frac{1}{\Gamma\big(\frac{1+\mu}{4}\big)^2\Gamma\big(\frac{1-\mu}{4}\big)^2}+\frac{1}{\Gamma\big(\frac{3+\mu}{4}\big)^2\Gamma\big(\frac{3-\mu}{4}\big)^2}. 
\end{align}
\begin{theorem}[Plancherel formula for $\lambda=0$] \label{Plancherel lambda=0}
For $\varepsilon=0$ and $f\in C_c^\infty\mbox{--}\Ind_H^G(\varepsilon\otimes e^0)$ we have the following inversion formula 
\[
f(k_\theta b_u)= \int_{i\RR} \sum\limits_{\xi=0}^1\textbf{P}_{\mu}^\xi\textbf{A}_{\mu}^\xi f(k_\theta b_u) \frac{d\mu}{|a(\mu)|^2}+\frac{1}{32\pi}\sum_{\mu \in 1-2\NN}\frac{-\mu}{\caprisun(\mu)} \mathbb{P}_{\mu}\mathbb{A}_{\mu} f(k_\theta b_u), 
\]
and the corresponding Plancherel formula
\begin{align*}
    \| f\|^2
    &=\int_{i\RR} \sum_{\xi=0}^1\|\textbf{A}^\xi_\mu f \|^2 \,\frac{d\mu}{|a(\mu)|^2}+\frac{1}{16\pi}\sum_{\mu \in 1-2\NN}\frac{\Gamma(1-\mu)}{\Gamma\big(\frac{-\mu}{2}\big)\caprisun(\mu)}\|\mathbb{A}_\mu f\|^2.
\end{align*}
For $\varepsilon=1$ and $f\in C_c^\infty\mbox{--}\Ind_H^G(\varepsilon\otimes e^0)$ we have the following inversion formula 
$$f(k_\theta b_u)= \int_{i\RR} \sum\limits_{\xi=0}^1\textbf{P}_{\mu}^\xi\textbf{A}_{\mu}^\xi f(k_\theta b_u) \frac{d\mu}{|a(\mu)|^2}+\frac{1}{2\pi i}\sum_{\mu \in -2\NN}\mu \mathcal{P}_{\mu}\mathcal{A}_{\mu} f(k_\theta b_u), $$
and the corresponding Plancherel formula
\begin{align*}
    \| f\|^2
    =\int_{i\RR} \sum_{\xi=0}^1\|\textbf{A}^\xi_\mu f \|^2 \,\frac{d\mu}{|a(\mu)|^2}+\frac{1}{2\pi}\sum_{\mu \in -2\NN}\frac{\Gamma(-\mu )\mu^2}{\Gamma\big(\frac{1-\mu}{2}\big)}\|\mathcal{A}_\mu f\|^2.
\end{align*}
\end{theorem}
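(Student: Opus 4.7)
The plan is to prove the inversion formula first by assembling the results of Section \ref{composing}, and then deduce the Plancherel formula by pairing the inversion formula with $f$ and exploiting adjoint relations between $P_\mu^\xi$ and $A_\mu^\xi$. By the $K$-type decomposition \eqref{K-types} and continuity of all operators involved, it suffices to establish the two inversion formulas for $f \in \Ind_H^G(\varepsilon \otimes e^0)_m$, $m \in 2\ZZ+\varepsilon$. Writing $f(k_\theta b_{u/2}) = \zeta_m(k_\theta)h(u/2)$ and splitting $h$ into even and odd parts, the two Fourier--Jacobi inversion formulas of Proposition \ref{FF} yield an expansion of $f$ into two continuous integrals over $i\RR$ and two residue sums over $D_0$ and $D_1$ respectively. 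The computation of $\sum_\xi \mathbf{P}_\mu^\xi \mathbf{A}_\mu^\xi f$ in Section \ref{composing} matches the continuous integrals, and Proposition \ref{epsilon =0 disk} (for $\varepsilon=0$) together with the analogous proposition just proved (for $\varepsilon=1$) match the residue sums; the vanishing statements in Lemma \ref{Lemma 1 - disk} and its $\varepsilon=1$ analogue ensure that on each $K$-type only the correct finite set of $\mu$'s contributes.

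For the Plancherel formula, take the $L^2(G/H)$-pairing of the inversion formula against $f$. The continuous part is handled by an adjoint identity: a direct Fubini computation using $\overline{K_\mu^\xi(g)} = K_{\overline\mu}^\xi(g)$ (since $\overline{|t|_\delta^\mu} = |t|_\delta^{\overline\mu}$) shows that the $L^2(K)$-$L^2(G/H)$-adjoint of $P_\mu^\xi$ is $A_{-\overline{\mu}}^\xi$. For $\mu \in i\RR$ we have $-\overline\mu = \mu$ and $\overline{\Gamma((\mu+1)/2)} = \Gamma((1-\mu)/2)$, so the normalisations in $\mathbf{P}_\mu^\xi$ and $\mathbf{A}_\mu^\xi$ become complex conjugates and $\mathbf{A}_\mu^\xi$ becomes the adjoint of $\mathbf{P}_\mu^\xi$. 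Hence $\langle \mathbf{P}_\mu^\xi\mathbf{A}_\mu^\xi f,f\rangle = \|\mathbf{A}_\mu^\xi f\|^2$, which yields the continuous parts of both Plancherel formulas.

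For the discrete part, the norm on $\Ind_P^G(0\otimes e^\mu\otimes 1)$ for $\mu \in 1-2\NN$ (resp.\ on $\Ind_P^G(1\otimes e^\mu\otimes 1)$ for $\mu \in -2\NN$) is defined via $T_\mu^0$ (resp.\ $\hat{T}_\mu^1$). The same adjoint computation now gives $(P_\mu^\xi)^\ast = A_{-\mu}^\xi$ at the $L^2(K)$-level (since $\mu$ is real), and Proposition \ref{Tony transform} expresses $T_\mu^0 A_\mu^\xi$ (resp.\ $\hat{T}_\mu^1 A_\mu^\xi$) as a scalar multiple of $A_{-\mu}^{\xi+\varepsilon}$. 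Combining these two identities converts $\langle \mathbb{P}_\mu\mathbb{A}_\mu f, f\rangle$ (resp.\ $\langle\mathcal{P}_\mu\mathcal{A}_\mu f, f\rangle$) into a scalar multiple of the Knapp--Stein norm $\|\mathbb{A}_\mu f\|^2$ (resp.\ $\|\mathcal{A}_\mu f\|^2$). Finally, multiplying by the coefficient $-\mu/(32\pi\caprisun(\mu))$ (resp.\ $\mu/(2\pi i)$) from the inversion formula and simplifying via the Gamma duplication and reflection identities produces the claimed coefficients.

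The main obstacle is the Gamma-function bookkeeping in the last step: the normalising factors in $\hat{P}^\xi_\mu,\hat{A}^\xi_\mu$ (and their $\varepsilon=1$ counterparts), the Knapp--Stein eigenvalues $b_m^\varepsilon(\mu)$ from Proposition \ref{norm resultat}, and the constants $d_\mu^\xi$ from Proposition \ref{Tony transform} must combine precisely to yield $\Gamma(1-\mu)/\Gamma(-\mu/2)$ and $\Gamma(-\mu)\mu^2/\Gamma((1-\mu)/2)$. Moreover, the sign choices in $\mathbb{P}_\mu$ and $\mathcal{P}_\mu$ (designed to make the Knapp--Stein inner product positive) must be consistent with the sign of $-\mu$ (resp.\ $\mu$) in the inversion formula, so that the final coefficients of $\|\mathbb{A}_\mu f\|^2$ and $\|\mathcal{A}_\mu f\|^2$ are non-negative, as required for genuine squared $L^2$-norms.
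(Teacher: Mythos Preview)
Your proposal is correct and follows essentially the same route as the paper's proof: the inversion formula is assembled from the results of Section~\ref{composing} on each $K$-type, and the Plancherel formula is obtained by pairing against $f$, using the adjoint relation $\langle P_\mu^\xi h,g\rangle_{L^2(G/H)}=\langle h,A_{-\overline{\mu}}^\xi g\rangle_{L^2(K)}$ for the continuous part and Proposition~\ref{Tony transform} (giving $T_\mu^0\mathbb{A}_\mu=\tfrac{\sqrt{\pi}\,2^\mu}{\Gamma((1-\mu)/2)}\mathbb{A}_{-\mu}$ and the analogous identity for $\mathcal{A}_\mu$) to convert the discrete part into the Knapp--Stein norm. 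The only step the paper makes more explicit than you do is this last Knapp--Stein identity for the combined operators $\mathbb{A}_\mu$ and $\mathcal{A}_\mu$, which together with the duplication formula immediately yields the stated constants without any further bookkeeping of $b_m^\varepsilon(\mu)$.
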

\begin{proof}
The inversion formulas follow directly from the introduction and results of Section \ref{composing}. To get the Plancherel formula write 
\[
\|f \|^2=\int_0^\pi \int_\RR f(k_\theta b_u)\overline{f(k_\theta b_u)}\cosh(2u)\,du d\theta,
\]
and use the inversion formula on $f(k_\theta b_u)$ and apply that 
\[
\int_{G/H} P^\xi_\mu f(xH) \overline{g(xH)}\,d(xH)=\int_K f(k)\overline{A_{-\overline{\mu}}^\xi g(k)}\,dk,
\]
for $f\in \Ind_P^G(\varepsilon\otimes e^\mu)$ and $g\in \Ind_H^G(\varepsilon\otimes e^0)$. Lastly for the discrete part, we apply Proposition \ref{Tony transform} to get 
\[
T_\mu^0\mathbb{A}_\mu=\frac{\sqrt{\pi}2^\mu}{\Gamma\big(\frac{1-\mu}{2}\big)}\mathbb{A}_{-\mu}\quad \text{and}\quad T_\mu^1\mathcal{A}_\mu=\frac{\sqrt{\pi}2^\mu}{\Gamma\big(\frac{2-\mu}{2}\big)\Gamma\big(\frac{1+\mu}{2}\big)}A_{-\mu}^1,
\]
giving the final result.
\end{proof}
\noindent We now extend the previous result from $\lambda=0$ to $\lambda\in i\RR$ using Theorem \ref{indre Ind}. We want to compose $A_{\mu}$ and $\ankh_\lambda^0$ but as we cannot ensure the regularity of the functions in the image of $\ankh_\lambda^0$ we end up doing this in an $L^2$-sense using direct integrals. Consider the following operators 
\[
\mathbb{A}_{\lambda,\mu}:=\frac{2^{\frac{1+\mu}{2}}\sqrt{\pi}\Gamma\big(\frac{1+\mu}{4}+\frac{\lambda}{4}\big)}{\Gamma\big(\frac{1-\mu}{4}\big)\Gamma\big(\frac{1+\mu}{4}\big)\Gamma\big(\frac{1-\mu}{4}-\frac{\lambda}{4}\big)}A_{\lambda,\mu}^0+\frac{2^{\frac{1+\mu}{2}}\sqrt{\pi}\Gamma\big(\frac{3+\mu}{4}+\frac{\lambda}{4}\big)}{\Gamma\big(\frac{3-\mu}{4}\big)\Gamma\big(\frac{3+\mu}{4}\big)\Gamma\big(\frac{3-\mu}{4}-\frac{\lambda}{4}\big)}A_{\lambda,\mu}^1,
\]
\[
\mathbf{A}_{\lambda,\mu}^\xi:= \frac{A_{\lambda,\mu}^\xi}{\Gamma(\frac{1-\mu}{2})},\quad \text{and}\quad \mathcal{A}_{\lambda,\mu}:=\frac{2^{\frac{1+\mu}{2}}\sqrt{\pi}\Gamma\big(\frac{1+\mu}{4}+\frac{\lambda}{4}\big)}{\Gamma\big(\frac{1-\mu}{4}\big)\Gamma\big(\frac{3+\mu}{4}\big)\Gamma\big(\frac{1-\mu}{4}-\frac{\lambda}{4}\big)}A_{\lambda,\mu}^0, 
\]
which are extensions of $\mathbb{A}_\mu$, $\textbf{A}_\mu^\xi$ and  $\mathcal{A}_\mu$ e.g. $\mathbb{A}_{0,\mu}=\mathbb{A}_\mu$. Furthermore let 
\begin{align*}
    \calH_\varepsilon = \int_{i\RR}^\oplus \pi_{\varepsilon,\mu} \otimes \CC^2\, d\mu \oplus \bigoplus\limits_{\mu \in 1-\varepsilon-2\NN }\pi^{\text{ds}}_{\varepsilon,\mu}.
\end{align*}
where $\int^\oplus_U H_\mu d\mu$ denotes a direct integral of Hilbert spaces, see e.g \cite{JF22} for a short exposition. The inner-product on $\calH_\varepsilon$ is given by Theorem \ref{Plancherel lambda=0}, i.e. for $\varepsilon=0$ and $f,h\in \calH_0$ 
\begin{align*}
    \langle g,h \rangle_\calH = \int_{i\RR} \sum\limits_{\xi=0}^1\langle g^\xi_\mu,h_\mu^\xi \rangle_{L^2(K)} \frac{d\mu}{|a(\mu)|^2} + \frac{1}{16\pi}\sum\limits_{\mu \in 1-2\NN} \frac{\Gamma(1-\mu)}{\Gamma(\frac{-\mu}{2})\caprisun(\mu)}\langle T^0_\mu g_{\mu},h\rangle_{L^2(K)}.
\end{align*}
For simplicity we shall assume that $\varepsilon=0$ for the remainder of the section. All arguments made can be done for $\varepsilon=1$ as well using the corresponding results from the previous section.

Abusing notation, Theorem \ref{Plancherel lambda=0} defines an isometry $A_0:C_c^\infty(G/H)\rightarrow \calH_0$ which extends to an isometry
\[A_0:\Ind_H^G(\varepsilon \otimes 1) \rightarrow \calH_0.\]
For $f\in \calH_0$ with $f=(f^0,f^1,f^d)$ we introduce the following map
\begin{align*}
    &P_0:\calH_0 \rightarrow L^2(G/H)\\
    f=(f^0,f^1,f^d) \mapsto \int_{i\RR} &\sum\limits_{\xi=0}^1\textbf{P}^\xi_\mu f^\xi_\mu \frac{d\mu}{|a(\mu)|^2} + \frac{1}{32\pi}\sum\limits_{\mu \in 1-2\NN}\frac{-\mu}{\caprisun(\mu)}\mathbb{P}_\mu f^d_\mu.
\end{align*}

\begin{lemma}\label{0'rne var de fede tider}
For $f\in \Ind_H^G(\varepsilon \otimes e^0)$ and $h\in \calH_0$ we have the following relation:
\begin{align*}
    \langle A_0 f, h\rangle_\calH = \langle f, P_0h\rangle_{L^2(G/H)}.
\end{align*}
\end{lemma}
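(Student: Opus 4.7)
The strategy is to expand both sides using the definition of $A_0$, $P_0$, and the inner product on $\calH_0$, and then invoke the adjoint relation between the basic intertwining operators cited inside the proof of Theorem \ref{Plancherel lambda=0}, namely
$$\int_{G/H} P_\mu^\xi g(xH)\overline{f'(xH)}\,d(xH) = \int_K g(k)\overline{A_{-\bar\mu}^\xi f'(k)}\,dk$$
for $g\in \Ind_P^G(\varepsilon\otimes e^\mu \otimes 1)$ and $f'\in \Ind_H^G(\varepsilon\otimes e^0)$. Writing $h=(h^0,h^1,h^d)$, the identity splits into a continuous and a discrete part, to be matched separately.

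For the continuous part, since $\mu\in i\RR$ we have $-\bar\mu=\mu$, and the above adjoint relation gives $\langle A_\mu^\xi f, h_\mu^\xi\rangle_{L^2(K)} = \langle f, P_\mu^\xi h_\mu^\xi\rangle_{L^2(G/H)}$. The normalizing factors in $\mathbf{A}_\mu^\xi = A_\mu^\xi/\Gamma(\tfrac{1-\mu}{2})$ and $\mathbf{P}_\mu^\xi = P_\mu^\xi/\Gamma(\tfrac{1+\mu}{2})$ are complex conjugates on $i\RR$, so they collapse when we pass the complex conjugation through, yielding $\langle \mathbf{A}_\mu^\xi f, h_\mu^\xi\rangle_{L^2(K)} = \langle f, \mathbf{P}_\mu^\xi h_\mu^\xi\rangle_{L^2(G/H)}$. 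Integrating against $d\mu/|a(\mu)|^2$ gives the continuous summand in $\langle f, P_0 h\rangle_{L^2(G/H)}$.

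For the discrete part $\mu\in 1-2\NN$, I will proceed in two stages. First, use the identity $T_\mu^0 \mathbb{A}_\mu = \tfrac{\sqrt{\pi}2^\mu}{\Gamma((1-\mu)/2)}\mathbb{A}_{-\mu}$ recalled at the end of the proof of Theorem \ref{Plancherel lambda=0} (derived from Proposition \ref{Tony transform}) to rewrite $\langle T_\mu^0 \mathbb{A}_\mu f, h_\mu^d\rangle_{L^2(K)}$ in terms of $\langle \mathbb{A}_{-\mu} f, h_\mu^d\rangle_{L^2(K)}$. Second, since $\mu$ is now real, setting $\mu' = \mu$ in the adjoint relation produces $\langle A_{-\mu}^\xi f, g\rangle_{L^2(K)} = \langle f, P_\mu^\xi g\rangle_{L^2(G/H)}$; the normalizing constants defining $\widehat{A}_{-\mu}^\xi$ and $\widehat{P}_\mu^\xi$ coincide and are real at these $\mu$, so summing over $\xi$ gives $\langle \mathbb{A}_{-\mu} f, h_\mu^d\rangle_{L^2(K)} = \langle f, \mathbb{P}_\mu h_\mu^d\rangle_{L^2(G/H)}$. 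Matching with the discrete summand of $P_0 h$ then reduces to the identity
$$\frac{1}{16\pi}\cdot\frac{\Gamma(1-\mu)}{\Gamma(-\mu/2)}\cdot\frac{\sqrt{\pi}2^\mu}{\Gamma((1-\mu)/2)} = \frac{-\mu}{32\pi},$$
which is a direct consequence of the Gamma duplication formula $\Gamma(1-\mu)=\pi^{-1/2}2^{-\mu}\Gamma(\tfrac{1-\mu}{2})\Gamma(1-\tfrac{\mu}{2})$ together with $\Gamma(1-\tfrac{\mu}{2}) = -\tfrac{\mu}{2}\Gamma(-\tfrac{\mu}{2})$.

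The computation above is carried out for $f\in C_c^\infty\text{-}\Ind_H^G(\varepsilon\otimes e^0)$; both sides are continuous in $f$ (the left-hand side because $A_0$ is the isometric extension from Theorem \ref{Plancherel lambda=0}, the right-hand side by direct inspection of $P_0 h$), so the identity extends by density to all $f\in \Ind_H^G(\varepsilon\otimes e^0)$. The $\varepsilon = 1$ case runs identically with $\mathcal{A}_\mu$, $\mathcal{P}_\mu$, $T_\mu^1$ in place of $\mathbb{A}_\mu$, $\mathbb{P}_\mu$, $T_\mu^0$. I expect the main nuisance of the argument to be not the algebra but the discrete-part bookkeeping, where the Knapp--Stein functional equation, the scalar from the adjoint pairing, and the duplication formula all have to line up precisely to produce the stated measure $-\mu/(32\pi\caprisun(\mu))$.
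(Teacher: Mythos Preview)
Your proposal is correct and follows essentially the same route as the paper: expand $\langle A_0 f,h\rangle_\calH$, use the adjoint relation $\langle P_\mu^\xi g,f\rangle_{L^2(G/H)}=\langle g, A_{-\bar\mu}^\xi f\rangle_{L^2(K)}$ for the continuous part, and for the discrete part first convert $T_\mu^0\mathbb{A}_\mu$ into $\mathbb{A}_{-\mu}$ via Proposition~\ref{Tony transform} and then apply the same adjoint relation, with the constants matching by the duplication formula exactly as you describe. Your added remarks on the complex-conjugate normalizations and on extending by density from $C_c^\infty$ are in fact slightly more careful than the paper's own write-up.
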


\begin{proof}
Let $f\in \Ind_H^G(\varepsilon \otimes e^\lambda)$ and $h\in C^\infty_c(\calH_0)$. We then find
\begin{align*}
    &\langle A_0 f,h\rangle_\calH= \sum\limits_{\xi=0}^1\int_{i\RR}\int_K \textbf{A}_\mu^\xi f(k)\overline{h^\xi_\mu(k)} dk\frac{d\mu}{|a(\mu)|^2}+ \frac{1}{16\pi}\sum\limits_{\mu \in 1-2\NN} \frac{\Gamma(1-\mu)}{\Gamma(\frac{-\mu}{2})\caprisun(\mu)} \langle T^0_\mu \circ \mathbb{A}_{\mu} f,h^\xi_\mu\rangle_{L^2(K)}\\
    &=\sum\limits_{\xi=0}^1 \int_{i\RR} \int_{G/H} f(x) \overline{\textbf{P}_\mu^\xi h^\xi_\mu(x)} d(xH) + \frac{1}{16\pi}\sum\limits_{\mu \in 1-2\NN}  \frac{\Gamma(1-\mu)}{\Gamma(\frac{-\mu}{2})\caprisun(\mu)} \frac{\sqrt{\pi}2^\mu}{\Gamma\big(\frac{1-\mu}{2}\big)}\langle \mathbb{A}_{-\mu} f, h^d_\mu \rangle_{L^2(K)}\\
    &=\sum\limits_{\xi=0}^1 \int_{i\RR} \langle f, \textbf{P}_\mu^\xi h^\xi_\mu\rangle_{L^2(G/H)}\frac{d\mu}{|a(\mu)|^2} + \frac{1}{32\pi}\sum\limits_{\mu \in 1-2\NN}  \frac{-\mu}{\caprisun(\mu)}\langle f, \mathbb{P}_{\mu}h^d_\mu \rangle_{L^2(G/H)}\\
    &=\langle f, P_0 h\rangle_{L^2(G/H)}. \qedhere
\end{align*}
\end{proof}
\begin{lemma}[See {{\cite[Theorem 1]{PEN76}}}]\label{Magisk direkte int lemma}
Suppose $S:\calH^\infty\rightarrow \calH^\infty$ is a continuous intertwining operator for $\calH^\infty$. Then for a.e $\mu \in i\RR\cup (1-\varepsilon-2\NN)$ there exists unique $\calH^\infty$ intertwining operators $S_\mu$ for $\pi_{\varepsilon,\mu}^\infty\otimes \CC^2$ if $\mu \in i\RR$ and for $\pi_{\varepsilon,\mu}^{\text{ds}}$ if $\mu \in 1-\varepsilon-2\NN$ such that
\[
(Sf)_\mu = S_\mu f_\mu \quad \text{a.e } \mu\in i\RR\cup (\mu\in 1-\varepsilon-2\NN),\,f\in \calH^\infty .
\]
\end{lemma}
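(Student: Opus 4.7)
The plan is to directly invoke Penney's theorem \cite[Theorem 1]{PEN76}, which is a Schur-type decomposition result for continuous intertwiners of smooth vectors in a direct integral of unitary representations. The statement to be proved is an instance of Penney's theorem applied to the direct integral decomposition
\[
\calH_\varepsilon = \int^\oplus_{i\RR}(\pi_{\varepsilon,\mu}\otimes\CC^2)\,d\mu\,\oplus\,\bigoplus_{\mu\in 1-\varepsilon-2\NN}\pi_{\varepsilon,\mu}^{\textup{ds}}.
\]

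First, I would verify the hypotheses of Penney's theorem: the measurable structure on the fibers is standard (it is compatible with the Plancherel decomposition of $\SL(2,\RR)$), and the smooth vectors $\calH_\varepsilon^\infty$ admit a compatible fiberwise realization, namely $\pi_{\varepsilon,\mu}^\infty\otimes\CC^2$ in the continuous part and $\pi_{\varepsilon,\mu}^{\textup{ds},\infty}$ in the discrete part. Penney's theorem then produces a measurable field $\mu\mapsto S_\mu$ of continuous operators on the fiber smooth vectors such that $(Sf)_\mu = S_\mu f_\mu$ for a.e.\ $\mu$, together with the a.e.\ uniqueness statement.

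Second, I would argue that each $S_\mu$ itself intertwines the fiber representation by applying the global identity $S\circ\pi(g) = \pi(g)\circ S$ componentwise. By the a.e.\ uniqueness in the decomposition, the fiberwise descriptions of both sides must coincide, so $S_\mu\circ\pi_{\varepsilon,\mu}(g) = \pi_{\varepsilon,\mu}(g)\circ S_\mu$ for $g$ in a fixed countable dense subset of $G$ and for a.e.\ $\mu$ (off a $g$-independent null set obtained as a countable union). Continuity of the $G$-action on each fiber of smooth vectors then promotes the identity to all $g\in G$.

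The main obstacle is that $S$ is only assumed continuous on $\calH^\infty$ in the Fréchet topology, not bounded on $\calH$, so the classical von Neumann decomposability theorem for bounded operators does not apply off the shelf. This is exactly what Penney's framework is designed to handle: using nuclearity of $\calH^\infty$ and a Gelfand-triple presentation of the direct integral, $S$ is represented by a distributional kernel which can be disintegrated over the spectral parameter $\mu$. Once this machinery is invoked, the fiberwise decomposition and the intertwining property of each $S_\mu$ follow essentially formally, and the statement of the lemma is the specialization to our concrete $\calH_\varepsilon$.
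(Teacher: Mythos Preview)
Your proposal is correct and matches the paper's approach: the paper does not supply its own proof of this lemma but simply cites \cite[Theorem~1]{PEN76}, so invoking Penney's theorem directly is exactly what is intended. Your additional remarks about verifying hypotheses and deducing the fiberwise intertwining property are reasonable elaborations, but none of this appears in the paper itself.
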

\begin{proposition}\label{A er bare på}
The map $A^\varepsilon_0:\Ind_H^G(\varepsilon\otimes e^0)\rightarrow \calH_0$ is surjective. In particular $A_0$ is an isometric isomorphism. 
\end{proposition}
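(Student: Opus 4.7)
The plan is to exploit the $K$-type structure together with the Fourier--Jacobi Plancherel theorem (Proposition~\ref{FF}) and the closed-image property of isometries. Since $A_0^\varepsilon$ is isometric by Theorem~\ref{Plancherel lambda=0}, its image is automatically closed in $\calH_0$. Moreover $K$-finite vectors are dense in $\calH_0$, since each fiber $\pi_{\varepsilon,\mu}$ and each $\pi^{\text{ds}}_{\varepsilon,\mu}$ is a unitary representation in which $K$-finite vectors are dense. Hence surjectivity reduces to showing that every $K$-finite vector of $\calH_0$ lies in the image of $A_0^\varepsilon$.

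Fix $m \in 2\ZZ + \varepsilon$. The $\zeta_m$-isotypic subspace of $\calH_0$ decomposes canonically as
\[
\int_{i\RR}^\oplus (\CC\zeta_m \otimes \CC^2) \, \frac{d\mu}{|a(\mu)|^2} \;\oplus\; \bigoplus_{\substack{\mu \in 1-\varepsilon-2\NN \\ |m| > -\mu}} \CC\zeta_m ,
\]
where each discrete summand lies in $\pi^{\hol}_{\varepsilon,\mu}$ or $\pi^{\ahol}_{\varepsilon,\mu}$ according to the sign of $m$. By Propositions~\ref{A image}, \ref{Lemma 1 - disk}, and \ref{diskret A billede}, the restriction of $A_0^\varepsilon$ to the $L^2$-closure of $\Ind_H^G(\varepsilon \otimes e^0)_m = \zeta_m \otimes C_c^\infty(\RR)$ is --- up to an invertible $2 \times 2$ matrix of $\omega_m^\xi$-coefficients together with non-vanishing $c_{-m}$ normalizations --- exactly the Fourier--Jacobi transform pair $(J_0, J_1)$ applied to the appropriately reparametrized even and odd parts of $h \in L^2(\RR)$.

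By Proposition~\ref{FF} the pair $(J_0, J_1)$ is a unitary isomorphism from $L^2(\RR)$ onto the spectral target $L^2(i\RR; \CC^2) \oplus \ell^2(D_0) \oplus \ell^2(D_1)$, which matches the above decomposition of the $\zeta_m$-slice of $\calH_0$ after the invertible rescaling. Hence $A_0^\varepsilon$ is surjective onto the $\zeta_m$-slice; varying $m$ and combining density of $K$-finite vectors with closedness of the image yields surjectivity of $A_0^\varepsilon$ onto all of $\calH_0$, which together with the isometry gives the claimed isometric isomorphism. The main obstacle is pinning down the invertible rescaling precisely in both parity cases --- particularly in the discrete part, where $\mathbb{A}_\mu$ (for $\varepsilon = 0$) respectively $\mathcal{A}_\mu$ (for $\varepsilon = 1$) must single out $\pi^{\hol}$ or $\pi^{\ahol}$ according to the sign of $m$ --- but the residue and $\omega_m^\xi$ identities assembled in Section~\ref{composing} handle exactly this bookkeeping, so no new computation is strictly required.
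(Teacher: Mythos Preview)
Your approach is correct and genuinely different from the paper's. You work $K$-type by $K$-type, using that on each slice $\zeta_m\otimes L^2(\RR,\cosh(2u)\,du)$ the map $A_0$ factors as the Fourier--Jacobi pair $(J_0,J_1)$ followed by a $\mu$-dependent $2\times2$ multiplier $M(\mu)$ (and scalars on the discrete points). Since $(J_0,J_1)$ is a unitary isomorphism onto its spectral target and $A_0|_m$ is isometric by Theorem~\ref{Plancherel lambda=0}, the multiplier is an isometric operator between direct integrals with two-dimensional fibers; such an operator is fiberwise unitary a.e., hence surjective. That is the one step you should make explicit: pointwise invertibility of $M(\mu)$ alone is not enough --- you need the isometry to force fiberwise unitarity. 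Also note that Proposition~\ref{FF} only states the inversion formula; surjectivity of the Fourier--Jacobi transform onto the spectral side is the standard Sturm--Liouville spectral theorem, which you are implicitly invoking.

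The paper proceeds quite differently. For the discrete part it simply observes that the projection of $\overline{\operatorname{Image}(A_0)}$ onto each irreducible summand $\pi^{\hol}_{\varepsilon,\mu}$, $\pi^{\ahol}_{\varepsilon,\mu}$ is nonzero (by Lemma~\ref{diskret A billede}) and hence everything, by irreducibility. For the continuous part it introduces the adjoint $P_0$ (Lemma~\ref{0'rne var de fede tider}), notes that $S=A_0^\infty\circ P_0^\infty$ is a self-intertwiner of $\calH_0^\infty$, and applies the direct-integral decomposition of intertwiners (Lemma~\ref{Magisk direkte int lemma}) together with Schur's lemma to write $S_\mu=\id\otimes B_\mu$. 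It then checks $B_\mu=\id$ by producing, for any $K$-finite $f_\mu$, a preimage $w$ with $A_0(w)_\mu=f_\mu$ --- which only requires \emph{pointwise} surjectivity of $h\mapsto(J_0h(\mu),J_1h(\mu))$, not the full $L^2$-surjectivity you use. So the paper trades the spectral-theoretic input you rely on for the abstract machinery of direct integral intertwiners; your route is more elementary in that respect, while the paper's is more self-contained given only the inversion formula.
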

\begin{proof}
Since the discrete and continuous part of $\calH_0$ consist of pairwise inequivalent representations of $G$ it suffices to show that the projection of $W=\overline{\text{Image}(A_0)}$ onto the continuous part and the discrete part respectively, is surjective. For the projection to the discrete part we can consider the projection of $W$ onto each summand $\pi^{\text{ds}}_{0,\mu}$. Proposition \ref{Best of casselmand} gives $\pi_{0,\mu}^{\text{ds}} = \pi^{\hol}_{0,\mu}\oplus \pi^{\ahol}_{0,\mu}$ and since these representations are inequivalent it again suffices to show that the projection on each of them are onto. Lemma \ref{diskret A billede} then shows that $\proj_{\pi_{0,\mu}^\hol}(W) \neq 0$ and $\proj_{\pi^{\ahol}_{0,\mu}}(W)\neq 0$. But since the projection is $G$-equivariant the image is a subrepresentation and it follows that both projections must be onto.

Since the projection onto an integrand of the continuous part of $\calH_0$ is in general not point-wise defined, the proof differs to that of the discrete part. Abusing notation slightly we shall write $f_\mu=(f_\mu^0,f_\mu^1)$ when $\mu\in i\RR$ and $f\in \calH_0$, omitting the discrete part.

By Lemma \ref{0'rne var de fede tider}  $(A_0)^*=P_0$ and since the adjoint is $G$-equivariant we have 
\[P_0(\calH_0^\infty)\subseteq L^2(G/H)^\infty\quad \text{ and }\quad A_0(L^2(G/H)^\infty)\subseteq \calH_0^\infty.\] 
Let $A_0^\infty=A_0|_{L^2(G/H)^\infty}$ and $P^\infty_0=P_0|_{\calH_0^\infty}$. Then 
\[
S=A^\infty_0\circ P^\infty_0:\calH_0^\infty\rightarrow \calH_0^\infty
\]
is a $\calH_0^\infty$ intertwining map and by Lemma \ref{Magisk direkte int lemma} there exists a family of $\calH_0^\infty$ intertwining operators $(S_\mu)$ such that $((A^\infty_0\circ P^\infty_0)f)_\mu = S_\mu f_\mu$ for a.e $\mu \in i\RR$ and all $f\in \calH_0^\infty$. By Schur's lemma this implies $S_\mu = (\id\otimes B_\mu)$ for a.e $\mu \in i\RR$, with $B_\mu:\CC^2\rightarrow \CC^2$ a linear map. Let $N$ denote the corresponding null-set, we then show that for $\mu\in i\RR\setminus N$ we have $S_\mu=\id$.

To this extent let $\mu \in i\RR\setminus N$ and let $f$ be a $K$-finite vector in $\calH_0^\infty$ and note that this implies $f_\mu$ must be a $K$-finite vector in $\pi_\mu^\infty$. Assume therefore without loss of generality that $f_\mu = (c_1\zeta_m,c_2\zeta_n)$ for some $m,n\in 2\ZZ$ and pick by Proposition \ref{A image} a $K$-finite vector in $L^2(G/H)^\infty$ such that $A_0(w)_\mu = f_\mu$. One can e.g pick $w$ of the form $w=\zeta_m f_1 + \zeta_n f_2$ with $f_1$ an even function and $f_2$ an odd function of correct regularity and growth. Then we have
\begin{align*}
(A^\infty_0\circ P^\infty_0 f)_\mu = A^\infty_0\circ P^\infty_0 \circ A^\infty_0(w)_\mu = A_0^\infty(w)_\mu = (A_\mu^0 w, A_\mu^1 w),
\end{align*}
where the second equality follows from the inversion formula given by Theorem \ref{Plancherel lambda=0}. On the other hand we have
\begin{align*}
    (A^\infty_0\circ P^\infty_0 f)_\mu=S_\mu f_\mu =  (\id\otimes B_\mu)A_0^\infty(w)_\mu = ((b_{11})_\mu A_\mu^0 w + (b_{12})_\mu A^1_\mu w,(b_{21})_\mu A_\mu^0 w + (b_{22})_\mu A^1_\mu w)
\end{align*}
hence
\begin{align*}
(A_\mu^0 w, A_\mu^1 w) = (a_\mu A_\mu^0 w + b_\mu A^1_\mu w,c_\mu A_\mu^0 w + d_\mu A^1_\mu w).
\end{align*}
Since $A_\mu^0$ and $A_\mu^1$ are linearly independent for $\mu\in i\RR$ it follows that $B_\mu = \id$ and hence $S_\mu = \id$ on the $K$-finite vectors of $\calH_0$, for a.e $\mu \in i\RR$ and since the $K$-finite vectors form a dense subset the result follows.
\end{proof}

\begin{theorem}
For $\lambda\in i\RR$ and $\varepsilon\in \ZZ/2\ZZ$ we have
\[
\Ind_H^G(\varepsilon\otimes e^\lambda)\cong \int_{i\RR}^{\oplus} \pi_{\varepsilon,\mu}\oplus \pi_{\varepsilon,\mu}\,\frac{d\mu}{|a(\mu)|^2}\,\oplus\bigoplus_{\mu\in 1-\varepsilon-2\NN} \pi_{\varepsilon,\mu}^{\hol}\oplus \pi_{\varepsilon,\mu}^\ahol,
\]
where the map is given by
\begin{align*}
f \mapsto (p_{\lambda,\mu}^0\textbf{A}_{\lambda,\mu}^0f,p_{\lambda,\mu}^1\textbf{A}_{\lambda,\mu}^1f,\mathbb{A}_{\lambda,\mu} f), \qquad \text{for } f\in \Ind_{H}^G(0\otimes e^\lambda)\\
f \mapsto (p_{\lambda,\mu}^0\textbf{A}_{\lambda,\mu}^0f,p_{\lambda,\mu}^1\textbf{A}_{\lambda,\mu}^1f,\mathcal{A}_{\lambda,\mu} f), \qquad \text{for } f\in \Ind_{H}^G(1\otimes e^\lambda)
\end{align*}
\end{theorem}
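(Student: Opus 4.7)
The strategy is to reduce to the already established case $\lambda=0$ (Theorem \ref{Plancherel lambda=0} and Proposition \ref{A er bare på}) by precomposing with the unitary isomorphism $\ankh_\lambda^0:\Ind_H^G(\varepsilon\otimes e^\lambda)\to\Ind_H^G(\varepsilon\otimes e^0)$ from Theorem \ref{indre Ind}. The central technical claim is that the $\lambda$-dependent intertwining operators are obtained from the $\lambda=0$ ones by pullback along $\ankh_\lambda^0$:
\[
\mathbf{A}_\mu^\xi\circ\ankh_\lambda^0=p_{\lambda,\mu}^\xi\,\mathbf{A}_{\lambda,\mu}^\xi,\qquad \mathbb{A}_\mu\circ\ankh_\lambda^0=\mathbb{A}_{\lambda,\mu}\quad(\varepsilon=0),\qquad \mathcal{A}_\mu\circ\ankh_\lambda^0=\mathcal{A}_{\lambda,\mu}\quad(\varepsilon=1),
\]
for explicit unimodular scalars $p_{\lambda,\mu}^\xi$ when $\lambda,\mu\in i\RR$. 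Granting these identities, the map of the theorem is simply $A_0\circ\ankh_\lambda^0$, which is an isometric isomorphism because both factors are (Proposition \ref{A er bare på} and Theorem \ref{indre Ind}).

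To prove the pullback identities I would unfold the definition of $A_\mu^\xi(\ankh_\lambda^0 f)(g)$, apply Fubini, and substitute $z=y\overline{n}_x$ in the outer integral. The computation reduces to evaluating the inner integral
\[
\int_\RR |x|^{\tfrac{\lambda}{2}-1}\,K_{0,-\mu}^\xi(\overline{n}_x z^{-1}g)\,dx.
\]
Direct matrix multiplication gives $(\overline{n}_x w)_{11}=w_{11}$ and $(\overline{n}_x w)_{21}=xw_{11}+w_{21}$, so with $w=z^{-1}g$ the factor $|w_{11}|_{\xi+\varepsilon}^{(-\mu-1)/2}$ comes out and, after the change of variable $y=xw_{11}$, one is left with a one-dimensional convolution of the homogeneous distributions $|y|_0^{\lambda/2-1}$ and $|y|_\xi^{(-\mu-1)/2}$ evaluated at $w_{21}$. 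Lemma \ref{convolution constant} evaluates this convolution as an explicit ratio of Gamma functions times $|w_{21}|_\xi^{(\lambda-\mu-1)/2}$, so the whole integral equals a Gamma-factor constant times $K_{-\lambda,-\mu}^\xi(z^{-1}g)$; crucially the sign character $\xi$ is preserved because right multiplication by $\overline{n}_x$ leaves the first column entry alone. This yields $A_\mu^\xi\circ\ankh_\lambda^0=\text{const}(\lambda,\mu,\xi)\cdot A_{\lambda,\mu}^\xi$; dividing by $\Gamma(\tfrac{1-\mu}{2})$ produces $p_{\lambda,\mu}^\xi$. The $\mathbb{A}$- and $\mathcal{A}$-identities are then obtained by summing the $\xi=0,1$ contributions, and an application of the Legendre duplication formula $\Gamma(\tfrac{1\pm\mu}{4})\Gamma(\tfrac{3\pm\mu}{4})=2^{(1\mp\mu)/2}\sqrt{\pi}\,\Gamma(\tfrac{1\pm\mu}{2})$ shows that the precise Gamma-factor coefficients in the definitions of $\mathbb{A}_{\lambda,\mu}$ and $\mathcal{A}_{\lambda,\mu}$ are exactly what cancels the constants from the convolution.

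With the three identities in hand, the theorem follows by pure transport of structure. The inversion formula and the norm identity of Theorem \ref{Plancherel lambda=0} applied to $\ankh_\lambda^0 f$, combined with $\|\ankh_\lambda^0 f\|=\|f\|$ and the fact that $|p_{\lambda,\mu}^\xi|=1$ for $\lambda,\mu\in i\RR$, give
\[
\|f\|^2=\int_{i\RR}\sum_{\xi=0}^1\|\mathbf{A}_{\lambda,\mu}^\xi f\|^2\frac{d\mu}{|a(\mu)|^2}+(\text{discrete part in }\mathbb{A}_{\lambda,\mu}\text{ or }\mathcal{A}_{\lambda,\mu}),
\]
together with the corresponding isomorphism onto $\calH_\varepsilon$. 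The main obstacle is the Gamma-factor bookkeeping in the kernel computation: one has to check that the intricate normalizations in $\mathbb{A}_{\lambda,\mu}$ and $\mathcal{A}_{\lambda,\mu}$ precisely match the scalars produced by the convolution, and that $|p_{\lambda,\mu}^\xi|=1$ follows from the reflection identity $|\Gamma(\tfrac{1}{2}+it)|^2=\pi/\cosh(\pi t)$ and its relatives. Once this verification is done (consistency at $\lambda=0$, where everything must reduce to the formulas of Theorem \ref{Plancherel lambda=0}, serves as a useful check), the conclusion is immediate.
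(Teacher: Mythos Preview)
Your proposal is correct and follows essentially the same route as the paper: compose the $\lambda=0$ isometric isomorphism $A_0$ with $\ankh_\lambda^0$, and identify the resulting operator with $A_\lambda$ by computing $A_\mu^\xi\circ\ankh_\lambda^0=p_{\lambda,\mu}^\xi A_{\lambda,\mu}^\xi$ via the $K\overline N$-coordinates and Lemma~\ref{convolution constant}. The only cosmetic difference is that the paper packages this computation inside the duality pairing $\langle A_0\ankh_\lambda^0 f,h\rangle_{\calH}=\langle \ankh_\lambda^0 f,P_0 h\rangle_{L^2(G/H)}$ (Lemma~\ref{0'rne var de fede tider}), which sidesteps the question of whether $\ankh_\lambda^0 f$ lies in the domain on which the integral formula for $A_\mu^\xi$ is a priori defined; your direct ``unfold and Fubini'' argument is the same calculation, but you should be aware that this is where the regularity bookkeeping is being hidden.
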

\begin{proof}
For $\lambda\in i\RR$ the map $A_0:\Ind_H^G(\varepsilon \otimes 1) \rightarrow \calH_0$ gives rise to an isometric isomorphism $\Ind_H^G(\varepsilon\otimes e^\lambda)\rightarrow \calH_0$ by composition with $\ankh_\lambda^0:\Ind_H^G(\varepsilon\otimes e^\lambda) \rightarrow \Ind_H^G(\varepsilon\otimes e^0)$ from Theorem \ref{indre Ind}. Let $\lambda \in i\RR$, $f\in \Ind_H^G(\varepsilon \otimes e^\lambda)$ and $h\in C_c^\infty(\calH_0)$. Then by Lemma \ref{0'rne var de fede tider} we have
\begin{align*}
    &\langle A_0 \circ \ankh_\lambda^0 f,h\rangle_\calH = \langle \ankh_\lambda^0 f, P_0 h\rangle_{L^2(G/H)}\\
    &=\sum\limits_{\xi=0}^1 \int_{i\RR} \langle \ankh_\lambda^0f, \textbf{P}_\mu^\xi h^\xi_\mu\rangle_{L^2(G/H)}\frac{d\mu}{|a(\mu)|^2} + \frac{1}{32\pi}\sum\limits_{\mu \in 1-2\NN}  \frac{-\mu}{\caprisun(\mu)}\langle \ankh_\lambda^0 f, \mathbb{P}_{\mu}h^d_\mu \rangle_{L^2(G/H)}\\
    &=\sum\limits_{\xi=0}^1\int_{i\RR}\int_K\int_{G/H} \ankh_\lambda^0f(x) \frac{K^\xi_{-\mu}(x^{-1}k)}{\Gamma(\frac{1-\mu}{2})}\overline{h_\mu^\xi(k)} d(xH)dk \frac{d\mu}{|a(\mu)|^2}\\ &
    \quad +\frac{1}{32\pi}\sum\limits_{\mu \in 1-2\NN}  \frac{-\mu}{\caprisun(\mu)}\langle \ankh_\lambda^0 f, \mathbb{P}_{\mu}h^d_\mu \rangle_{L^2(G/H)}.
\end{align*}
Using the coordinates $xH=k_\theta\overline{n}_yH$ and applying Lemma \ref{convolution constant} in the distributional sense, we find
\begin{align*}
    &\frac{1}{\Gamma(\frac{1-\mu}{2})}\int_{G/H} K_{-\mu}^\xi(x^{-1}k)\ankh_\lambda^0f(x) d(xH)\\
    &=\frac{\Gamma(\frac{2-\lambda}{4})}{\sqrt{\pi}2^{\frac{\lambda}{4}}\Gamma(\frac{1-\mu}{2})\Gamma(\frac{\lambda}{4})}\int_0^\pi \!\int_\RR |\cos \theta |_\varepsilon^{-\mu-1} f(gk_\theta \overline{n}_z)\int_\RR |x|^{\frac{-\lambda-2}{2}}_\varepsilon |z-\tan \theta -x|^{\frac{-\mu-1}{2}}_\xi \frac{1}{2}dxdzd\theta \\
    &=p_{\lambda,\mu}^\xi \textbf{A}_{\lambda,\mu}^{\xi}f(k).
\end{align*}
An analogous calculation applies to the discrete part after applying that $\mathbb{A}_{-\mu}=\frac{\Gamma\big(\frac{1-\mu}{2}\big)}{\sqrt{\pi}2^\mu}T_\mu^0\mathbb{A}_\mu$. In conclusion we find
\begin{align*}
   \langle A_0 \circ \ankh_\lambda^0 f,h\rangle_\calH = \sum\limits_{\xi=0}^1 \int_{i\RR} \langle p_{\lambda,\mu}^\xi\textbf{A}^\xi_{\lambda,\mu} f, h^\xi_\mu\rangle_{L^2(K)} \frac{d\mu}{|a(\mu)|^2} + \frac{1}{16\pi}\sum\limits_{\mu \in 1-2\NN}  \frac{\Gamma(1-\mu)}{\Gamma(\frac{-\mu}{2})\caprisun(\mu)}\langle T_\mu^0\mathbb{A}_{\lambda,\mu} f, h^d_\mu \rangle_{L^2(K)}
\end{align*}
Hence $A_0\circ \ankh_\lambda^0 = A_{\lambda}$ with $A_{\lambda}:\Ind_H^G(\lambda\otimes e^\lambda) \rightarrow \calH_0$ given by
\begin{align*}
    \langle A_\lambda f, h\rangle_\calH = \sum\limits_{\xi=0}^1 \int_{i\RR} \langle p_{\lambda,\mu}^\xi \textbf{A}^\xi_{\lambda,\mu} f, h^\xi_\mu\rangle_{L^2(K)} \frac{d\mu}{|a(\mu)|^2} + \frac{1}{16\pi}\sum\limits_{\mu \in 1-2\NN}  \frac{\Gamma(1-\mu)}{\Gamma(\frac{-\mu}{2})\caprisun(\mu)}\langle T_\mu^0\mathbb{A}_{\lambda,\mu} f, h^d_\mu \rangle_{L^2(K)}\label{Ankhumentet}
    \end{align*}
\end{proof}
\begin{corollary}\label{hovedresultat}
For $\varepsilon=0$ and $\Ind_H^G(\varepsilon\otimes e^\lambda)$ we have the following Plancherel formula 
\begin{align*}
    \| f\|^2
    &=\int_{i\RR} \sum_{\xi=0}^1\|\textbf{A}^\xi_{\lambda,\mu} f \|^2 \,\frac{d\mu}{|a(\mu)|^2}+\frac{1}{16\pi}\sum_{\mu \in 1-2\NN}\frac{\Gamma(1-\mu)}{\Gamma\big(\frac{-\mu}{2}\big)\caprisun(\mu)}\|\mathbb{A}_{\lambda,\mu} f\|^2.
\end{align*}
For $\varepsilon=1$ and $f\in \Ind_H^G(\varepsilon\otimes e^\lambda)$ we have the following Plancherel formula 
\begin{align*}
    \| f\|^2
    =\int_{i\RR} \sum_{\xi=0}^1\|\textbf{A}^\xi_{\lambda,\mu} f \|^2 \,\frac{d\mu}{|a(\mu)|^2}+\frac{1}{2\pi}\sum_{\mu \in -2\NN}\frac{\mu^2\Gamma(-\mu)}{\Gamma\big(\frac{1-\mu}{2}\big)}\|\mathcal{A}_{\lambda,\mu} f\|^2.
\end{align*}
with $a(\mu)$ and $\caprisun(\mu)$ given by \eqref{a caprisun}.
\end{corollary}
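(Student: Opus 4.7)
The plan is to deduce the corollary directly from the preceding theorem, which constructs an explicit isometric isomorphism $A_\lambda = A_0 \circ \ankh_\lambda^0$ from $\Ind_H^G(\varepsilon\otimes e^\lambda)$ onto $\calH_\varepsilon$. Indeed, $\ankh_\lambda^0$ is unitary by Theorem \ref{indre Ind}, while $A_0$ is an isometric isomorphism by Theorem \ref{Plancherel lambda=0} combined with Proposition \ref{A er bare på}, so $\|f\|^2 = \|A_\lambda f\|^2_{\calH_\varepsilon}$ holds for every $f\in \Ind_H^G(\varepsilon\otimes e^\lambda)$. It only remains to expand the right-hand side using the explicit inner product on $\calH_\varepsilon$ identified in the preceding theorem and recognize the resulting expression as the claimed formula.

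Setting $h = A_\lambda f$ in the explicit formula for $\langle A_\lambda f, h\rangle_{\calH_\varepsilon}$ from the preceding theorem, the continuous part contributes
\[
\sum_{\xi=0}^1 \int_{i\RR} |p_{\lambda,\mu}^\xi|^2\, \|\mathbf{A}^\xi_{\lambda,\mu} f\|^2_{L^2(K)}\, \frac{d\mu}{|a(\mu)|^2},
\]
while the discrete part (for $\varepsilon=0$) contributes
\[
\frac{1}{16\pi}\sum_{\mu \in 1-2\NN}\frac{\Gamma(1-\mu)}{\Gamma(-\mu/2)\caprisun(\mu)}\, \langle T^0_\mu \mathbb{A}_{\lambda,\mu} f,\, \mathbb{A}_{\lambda,\mu} f\rangle_{L^2(K)}.
\]
By the very definition of the inner product on $\pi^{\textup{ds}}_{0,\mu}$ from Section \ref{Principal series section} (namely, the pairing against $T^0_\mu$), one has $\langle T^0_\mu g, g\rangle_{L^2(K)} = \|g\|^2$ on the discrete series summand, so the discrete sum is already in the desired form. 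The $\varepsilon=1$ case is identical in structure: although the preceding theorem was written out for $\varepsilon=0$, the same derivation goes through verbatim upon replacing $\mathbb{A}_{\lambda,\mu}$, $T^0_\mu$ and the discrete weight by their $\varepsilon=1$ counterparts $\mathcal{A}_{\lambda,\mu}$, $\hat{T}^1_\mu$ and $\mu^2\Gamma(-\mu)/(2\pi\Gamma((1-\mu)/2))$, using the corresponding identities from Theorem \ref{Plancherel lambda=0}.

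The one nontrivial step, and the main (essentially only) obstacle, is the identity $|p^\xi_{\lambda,\mu}| = 1$ for $\lambda,\mu \in i\RR$, which is needed to turn $|p^\xi_{\lambda,\mu}|^2\|\mathbf{A}^\xi_{\lambda,\mu} f\|^2$ into the clean summand $\|\mathbf{A}^\xi_{\lambda,\mu} f\|^2$ of the corollary. I would extract $p^\xi_{\lambda,\mu}$ explicitly by applying Lemma \ref{convolution constant} to the inner integral appearing in the preceding proof; this produces a finite product/ratio of $\Gamma$-factors whose arguments are permuted (up to an integer real shift) under $(\lambda,\mu) \mapsto (-\lambda,-\mu)$. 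Since imaginary $\lambda,\mu$ satisfy $-\lambda = \bar\lambda$ and $-\mu = \bar\mu$, the identity $\overline{\Gamma(z)} = \Gamma(\bar z)$ then yields $|p^\xi_{\lambda,\mu}|^2 = p^\xi_{\lambda,\mu}\, \overline{p^\xi_{\lambda,\mu}} = 1$. With this Gamma-function check in hand, both Plancherel formulas of the corollary follow at once.
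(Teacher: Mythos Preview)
Your proposal is correct and follows exactly the paper's own approach: the corollary is deduced from the preceding theorem by the single observation that $|p_{\lambda,\mu}^\xi|=1$ for $\lambda,\mu\in i\RR$, which is precisely the Gamma-function symmetry argument you sketch. Your write-up simply spells out in more detail what the paper compresses into one line.
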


\begin{proof}
Since $|p_{\lambda,\mu}^\xi|=1$ for $\lambda,\mu \in i\RR$ the assertion follows.
\end{proof}

\appendix
\section{Integral formulas}
\begin{lemma}[See {{\cite[Proposition 16.8]{C20}}}] \label{casselman integral}
For $\Re{(\alpha+\beta)}>0$ we have 
\[
\int_\RR \frac{dx}{(x-i)^\alpha(x+i)^\beta}=\frac{2^{2-\alpha-\beta} i^{\alpha-\beta} \Gamma(\alpha+\beta-1)}{\Gamma(\alpha)\Gamma(\beta)}
\]
\end{lemma}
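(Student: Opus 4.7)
The plan is to convert the integral into a classical Beta-type evaluation via the tangent substitution, carefully tracking branches of the power functions. First note that absolute convergence at infinity actually requires $\Re(\alpha+\beta)>1$, so I would first prove the formula for real $\alpha,\beta$ in this regime and then extend to the larger claimed domain by analytic continuation of both sides in $\alpha$ and $\beta$ (each side is meromorphic in these parameters wherever the integral is defined).

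Substitute $x=\tan\theta$ for $\theta\in(-\pi/2,\pi/2)$. A short trigonometric manipulation yields
\[
x-i=\frac{e^{i(\theta-\pi/2)}}{\cos\theta},\qquad x+i=\frac{e^{i(\pi/2-\theta)}}{\cos\theta},
\]
with arguments $\theta-\pi/2\in(-\pi,0)$ and $\pi/2-\theta\in(0,\pi)$; both lie strictly inside the principal branch, so raising to the $-\alpha$ and $-\beta$ powers is unambiguous. Combining with $dx=\sec^2\theta\,d\theta$ and consolidating the resulting phase into $e^{i\pi(\alpha-\beta)/2}=i^{\alpha-\beta}$, the integral reduces to
\[
i^{\alpha-\beta}\int_{-\pi/2}^{\pi/2}\cos^{\alpha+\beta-2}\theta\,e^{i(\beta-\alpha)\theta}\,d\theta.
\]

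The remaining trigonometric integral is classical: for $\Re(a)>0$ one has
\[
\int_{-\pi/2}^{\pi/2}\cos^{a-1}\theta\,e^{ib\theta}\,d\theta=\frac{\pi\,\Gamma(a)}{2^{a-1}\,\Gamma\!\big(\tfrac{a+b+1}{2}\big)\Gamma\!\big(\tfrac{a-b+1}{2}\big)},
\]
which can be obtained by the substitution $u=(1+e^{2i\theta})/2$ turning it into a Beta integral on a line segment, or by binomial expansion of $\cos^{a-1}\theta=2^{1-a}(e^{i\theta}+e^{-i\theta})^{a-1}$ combined with $\Gamma$-function identities. Specializing to $a=\alpha+\beta-1$ and $b=\beta-\alpha$ makes the two half-integer Gamma arguments collapse exactly to $\alpha$ and $\beta$, and assembling the remaining constants (including the power of $2$) yields the claimed closed form. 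The main obstacle is the branch-choice check in the substitution step; once one verifies the arguments stay inside $(-\pi,\pi)$, principal-branch exponentiation applies term by term and the phase coalesces cleanly into $i^{\alpha-\beta}$, after which the rest is direct bookkeeping plus the meromorphic continuation argument noted above.
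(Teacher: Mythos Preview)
The paper does not give its own proof of this lemma; it is simply cited from Casselman's notes. Your argument via the substitution $x=\tan\theta$ together with the closed form for $\int_{-\pi/2}^{\pi/2}\cos^{a-1}\theta\,e^{ib\theta}\,d\theta$ is correct and standard, including the branch-tracking and the remark that absolute convergence needs $\Re(\alpha+\beta)>1$ with analytic continuation thereafter. One small point: carrying out your last step carefully gives
\[
i^{\alpha-\beta}\cdot\frac{\pi\,\Gamma(\alpha+\beta-1)}{2^{\alpha+\beta-2}\,\Gamma(\alpha)\Gamma(\beta)}
=\frac{2^{2-\alpha-\beta}\,\pi\,i^{\alpha-\beta}\,\Gamma(\alpha+\beta-1)}{\Gamma(\alpha)\Gamma(\beta)},
\]
which has an extra factor of $\pi$ compared to the formula as printed in the lemma. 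This is a typo in the statement, not in your proof: the special case $\alpha=\beta=1$ gives $\int_\RR(x^2+1)^{-1}\,dx=\pi$, and the paper itself uses the $\pi$-corrected version when it applies the lemma in the computation of $T_\mu^\varepsilon$.
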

\begin{lemma}[See {{\cite[Section 3.631]{GR94}}}]
For $\Re{\nu}>0$ we have
\[
\int_0^\pi \sin^{\nu-1}(x)\cos(ax)\,dx=\frac{2^{1-\nu}\pi \cos\big(\frac{a\pi}{2}\big)\Gamma(\nu)}{\Gamma\big(\frac{\nu+1-a}{2}\big)\Gamma\big(\frac{\nu+1+a}{2}\big)}.
\]
\end{lemma}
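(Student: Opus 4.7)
The plan proceeds in two stages: a symmetry reduction, and then evaluation of a Beta-type integral via contour deformation.

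First I would make a preparatory reduction. Substitute $y = x - \pi/2$ so that $\sin x = \cos y$, and expand $\cos(ax) = \cos(a\pi/2)\cos(ay) - \sin(a\pi/2)\sin(ay)$ by the addition formula. Integrated against the even weight $\cos^{\nu-1}(y)$ over the symmetric interval $[-\pi/2,\pi/2]$, the $\sin(ay)$ piece vanishes by parity, leaving
$$\int_0^\pi \sin^{\nu-1}(x)\cos(ax)\,dx = 2\cos\!\big(\tfrac{a\pi}{2}\big)\int_0^{\pi/2}\cos^{\nu-1}(y)\cos(ay)\,dy.$$
It therefore suffices to establish the auxiliary identity
$$\int_0^{\pi/2}\cos^{\nu-1}(y)\cos(ay)\,dy = \frac{\pi\,\Gamma(\nu)}{2^{\nu}\,\Gamma\!\big(\tfrac{\nu+1+a}{2}\big)\Gamma\!\big(\tfrac{\nu+1-a}{2}\big)}. \qquad(*)$$
Combining with the reduction gives the stated formula up to a bookkeeping of $2^{1-\nu}$.

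To prove $(*)$ I would first re-symmetrize: using parity again, the left-hand side equals $\tfrac12\int_{-\pi/2}^{\pi/2}\cos^{\nu-1}(y)e^{iay}\,dy$. Then I would factor $\cos^{\nu-1}(y) = 2^{1-\nu}e^{-i(\nu-1)y}(1+e^{2iy})^{\nu-1}$ with principal branches chosen to be positive on the positive real axis, and make the substitution $z = e^{2iy}$, sending the interval $[-\pi/2,\pi/2]$ to a counterclockwise traversal of the unit circle starting and ending at $z=-1$. The integrand becomes, up to constants, $z^{(a-\nu-1)/2}(1+z)^{\nu-1}$, which is analytic off the cuts $(-\infty,0]$ (from the power of $z$) and $(-\infty,-1]$ (from the power of $1+z$). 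Deforming the unit circle onto the two sides of the branch slit $[-1,0]$, the contributions along the upper and lower banks differ by a phase $e^{\pm i\pi(a-\nu-1)/2}$, so they combine into a factor $2i\sin\!\big(\tfrac{\pi(a-\nu-1)}{2}\big)$ times the real Beta integral
$$\int_0^1 t^{(a-\nu-1)/2}(1-t)^{\nu-1}\,dt \;=\; B\!\big(\tfrac{a-\nu+1}{2},\,\nu\big).$$
Applying the reflection formula $\Gamma(s)\Gamma(1-s) = \pi/\sin(\pi s)$ rewrites the sine as a ratio of Gamma factors, and a small amount of Gamma arithmetic converts $\Gamma(\nu)$ and the surplus factors into the symmetric denominator $\Gamma(\tfrac{\nu+1+a}{2})\Gamma(\tfrac{\nu+1-a}{2})$ of $(*)$.

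The main obstacle is the branch-cut bookkeeping in the contour deformation and verifying that the auxiliary small circles around the branch points $z=0$ and $z=-1$ contribute negligibly. This requires a restricted parameter range, roughly $\Re(\nu)>0$ together with $-1<\Re(a-\nu-1)/2<0$. Once $(*)$ is established in that strip, both sides of the identity are entire functions of $a$ for fixed $\nu$ with $\Re(\nu)>0$ (the left side is an entire Fourier-type integral over a bounded interval, and the right side is a meromorphic quotient of Gammas whose poles cancel against zeros of $\cos(a\pi/2)$ after restoring the prefactor from the reduction step), so analytic continuation in $a$ propagates the identity to all $a\in\CC$, completing the proof.
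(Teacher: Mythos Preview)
Your argument is correct. The paper does not actually prove this lemma; it is simply quoted from Gradshteyn--Ryzhik, Section~3.631, with no argument given. Your symmetry reduction to the auxiliary integral $(*)$ is clean, and the contour-deformation evaluation via the substitution $z=e^{2iy}$ and collapse onto the two banks of $[-1,0]$ is one of the classical routes to this identity (essentially Cauchy's original computation of the integral $\int_0^{\pi/2}\cos^{\nu-1}y\cos(ay)\,dy$).

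Two small bookkeeping remarks. First, when you combine the upper- and lower-bank contributions the correct factor is $-2i\sin\!\big(\tfrac{\pi(a-\nu-1)}{2}\big)$ rather than $+2i\sin(\cdots)$; this sign is then absorbed when you apply the reflection formula, since $\sin\!\big(\tfrac{\pi(a-\nu-1)}{2}\big)=-\sin\!\big(\tfrac{\pi(a-\nu+1)}{2}\big)$, and the end result $(*)$ comes out as stated. Second, in your final paragraph the right-hand side of $(*)$ is already entire in $a$ because $1/\Gamma$ is entire, so no pole--zero cancellation with $\cos(a\pi/2)$ is needed for the analytic continuation step; the parameter restriction you identify (ensuring convergence of the Beta integral and vanishing of the small-circle contributions, namely $\Re\nu>0$ and $\Re(a-\nu)>-1$) is sufficient to anchor the identity on a nonempty open set, after which it propagates to all $a$.
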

\begin{lemma} [See {{\cite[Section 3.631]{GR94}}}] 
For $\Re{\nu}>0$ we have
\[
\int_0^\pi \sin^{\nu-1}(x)\sin(ax)\,dx=\frac{2^{1-\nu}\pi \sin\big(\frac{a\pi}{2}\big)\Gamma(\nu)}{\Gamma\big(\frac{\nu+1-a}{2}\big)\Gamma\big(\frac{\nu+1+a}{2}\big)}.
\]
\end{lemma}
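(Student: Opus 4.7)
The plan is to deduce this sine integral from the preceding cosine integral (the companion lemma stated immediately above it) by a symmetry argument built on the substitution $x \mapsto \pi - x$. Denote
\[
I := \int_0^\pi \sin^{\nu-1}(x)\sin(ax)\,dx, \qquad J := \int_0^\pi \sin^{\nu-1}(x)\cos(ax)\,dx,
\]
and split each of these as $\int_0^{\pi/2} + \int_{\pi/2}^\pi$. In the second piece I would substitute $x \mapsto \pi - y$, use $\sin(\pi - y) = \sin(y)$, and expand $\sin(a(\pi - y))$ and $\cos(a(\pi - y))$ via the addition formulas. This rewrites both $I$ and $J$ as linear combinations of the two half-interval integrals
\[
A := \int_0^{\pi/2}\sin^{\nu-1}(x)\cos(ax)\,dx, \qquad B := \int_0^{\pi/2}\sin^{\nu-1}(x)\sin(ax)\,dx.
\]

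After applying the double-angle identities $1 - \cos(a\pi) = 2\sin^2(a\pi/2)$, $1 + \cos(a\pi) = 2\cos^2(a\pi/2)$ and $\sin(a\pi) = 2\sin(a\pi/2)\cos(a\pi/2)$, the two resulting expressions factor as
\[
I = 2\sin(a\pi/2)\bigl[\cos(a\pi/2)\,A + \sin(a\pi/2)\,B\bigr], \qquad J = 2\cos(a\pi/2)\bigl[\cos(a\pi/2)\,A + \sin(a\pi/2)\,B\bigr].
\]
The common bracketed factor then cancels, yielding the clean relation $I \cos(a\pi/2) = J \sin(a\pi/2)$. Substituting the previous lemma's closed form for $J$, the $\cos(a\pi/2)$ from its numerator cancels the one on the left, and one is left with precisely the claimed formula.

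The only subtlety is the case $\cos(a\pi/2) = 0$ (i.e., $a$ an odd integer), where the division step is formally invalid. This is handled by noting that both sides of the claimed identity are entire functions of $a$ for fixed $\nu$ with $\Re\nu > 0$: the left side because the integrand depends holomorphically on $a$ over a compact interval, and the right side because $\sin(a\pi/2)$ cancels any apparent poles coming from the Gamma factors at odd integer $a$. Since the equality holds off a discrete set, it extends by continuity. I do not expect any real obstacle; the only bookkeeping is the double-angle reorganization that produces the common factor $\cos(a\pi/2) A + \sin(a\pi/2) B$ in both $I$ and $J$.
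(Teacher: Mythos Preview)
Your argument is correct: the substitution $x\mapsto \pi-x$ together with the addition formulas indeed produces
\[
I = 2\sin\tfrac{a\pi}{2}\bigl[\cos\tfrac{a\pi}{2}\,A + \sin\tfrac{a\pi}{2}\,B\bigr],\qquad
J = 2\cos\tfrac{a\pi}{2}\bigl[\cos\tfrac{a\pi}{2}\,A + \sin\tfrac{a\pi}{2}\,B\bigr],
\]
so $I\cos\tfrac{a\pi}{2}=J\sin\tfrac{a\pi}{2}$, and substituting the previous lemma's value for $J$ gives the claimed formula. The analyticity argument in $a$ cleanly handles the exceptional points $\cos\tfrac{a\pi}{2}=0$.

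There is nothing to compare against: the paper does not prove this lemma at all but simply quotes it from Gradshteyn--Ryzhik \cite[Section~3.631]{GR94}. Your derivation is a genuine (and elegant) proof of a result the paper merely cites, and it has the pleasant feature of making explicit why the two companion formulas differ only by the replacement $\cos\tfrac{a\pi}{2}\leftrightarrow\sin\tfrac{a\pi}{2}$.
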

\begin{lemma} \label{appendix trig integral}
For $\Re{\nu}>0$ we have 
\[
\int_0^\pi \sin^{\nu-1}(x) e^{iax}\,dx=\frac{2^{1-\nu}\pi e^{\frac{ai\pi}{2}}\Gamma(\nu)}{\Gamma\big(\frac{\nu+1-a}{2}\big)\Gamma\big(\frac{\nu+1+a}{2}\big)}.
\]
\end{lemma}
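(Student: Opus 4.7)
The plan is to use Euler's formula to reduce the claim to the two immediately preceding lemmas in the appendix. Writing $e^{iax}=\cos(ax)+i\sin(ax)$ and splitting the integral linearly gives
\[
\int_0^\pi \sin^{\nu-1}(x)e^{iax}\,dx = \int_0^\pi \sin^{\nu-1}(x)\cos(ax)\,dx + i\int_0^\pi \sin^{\nu-1}(x)\sin(ax)\,dx.
\]
Both integrals on the right-hand side converge absolutely under the hypothesis $\Re(\nu)>0$, since near the endpoints $0$ and $\pi$ the integrand behaves like $x^{\Re(\nu)-1}$ or $(\pi-x)^{\Re(\nu)-1}$.

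Next, I would apply the two preceding lemmas from Gradshteyn--Ryzhik to evaluate each piece, which produces a common prefactor
\[
\frac{2^{1-\nu}\pi\,\Gamma(\nu)}{\Gamma\!\bigl(\tfrac{\nu+1-a}{2}\bigr)\Gamma\!\bigl(\tfrac{\nu+1+a}{2}\bigr)}
\]
multiplied by $\cos(a\pi/2)$ in the first integral and $\sin(a\pi/2)$ in the second. Factoring this prefactor out of the sum leaves $\cos(a\pi/2)+i\sin(a\pi/2)$, which is exactly $e^{ia\pi/2}$ by Euler's formula. This produces the claimed identity.

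There is no real obstacle here; the result is a trivial consequence of combining the two preceding lemmas. The only subtlety worth noting is that the identity is first established for real $a$ (so that $\cos(ax)$ and $\sin(ax)$ are genuinely the real and imaginary parts of $e^{iax}$), and then both sides extend holomorphically in $a$ to all of $\CC$: the left-hand side is entire in $a$ by dominated convergence and the uniform bound $|e^{iax}|\le e^{|\Im(a)|\pi}$ on $[0,\pi]$, while the right-hand side is meromorphic in $a$ with the pole locations cancelling, so the identity persists for all $a\in\CC$ by analytic continuation.
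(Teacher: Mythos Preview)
Your proof is correct and is exactly the derivation the paper intends: the lemma is stated without proof immediately after the two Gradshteyn--Ryzhik formulas for $\int_0^\pi\sin^{\nu-1}(x)\cos(ax)\,dx$ and $\int_0^\pi\sin^{\nu-1}(x)\sin(ax)\,dx$, and combining them via $e^{iax}=\cos(ax)+i\sin(ax)$ is the obvious step. Your remark on analytic continuation in $a$ is a nice addition but not strictly needed for the paper's purposes, where $a=\tfrac{m}{2}$ is real.
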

\begin{lemma}[See {{\cite[Section 3.251]{GR94}}}]\label{1. integral}
For $\Re{\beta}>-1$ and $\Re{(\alpha+\beta)}<-1$ we have
\[
\int_1^\infty x^\alpha(x-1)^\beta \,dx=B\big(-\alpha-\beta-1,\beta+1\big).
\]
\end{lemma}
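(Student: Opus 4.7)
The plan is to reduce the integral to the standard Beta function form
\[
B(p,q) = \int_0^1 t^{p-1}(1-t)^{q-1}\,dt, \qquad \Re(p), \Re(q) > 0,
\]
by a single change of variables. The convergence hypotheses $\Re(\beta) > -1$ and $\Re(\alpha+\beta) < -1$ already hint at this, since they correspond exactly to positivity of the two parameters $\beta+1$ and $-\alpha-\beta-1$ that appear in the claimed answer.

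First I would substitute $x = 1/t$, so that $dx = -dt/t^2$ and the limits $x = 1, \infty$ map to $t = 1, 0$ respectively. Observing $x - 1 = (1-t)/t$, this transforms the integrand according to $x^\alpha (x-1)^\beta\,dx = t^{-\alpha}\cdot t^{-\beta}(1-t)^\beta \cdot (-t^{-2}\,dt)$. Flipping the limits absorbs the sign, so the integral becomes
\[
\int_1^\infty x^\alpha (x-1)^\beta\,dx = \int_0^1 t^{-\alpha-\beta-2}(1-t)^\beta\,dt.
\]
Matching against $\int_0^1 t^{p-1}(1-t)^{q-1}\,dt$ with $p - 1 = -\alpha-\beta-2$ and $q - 1 = \beta$ gives $p = -\alpha-\beta-1$ and $q = \beta+1$, so the right-hand side equals $B(-\alpha-\beta-1,\, \beta+1)$, as required.

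Finally I would verify the hypotheses are precisely those needed to apply the Beta integral: the substitution is legitimate and the resulting integral converges exactly when $\Re(p) > 0$ and $\Re(q) > 0$, i.e.\ when $\Re(\alpha+\beta) < -1$ and $\Re(\beta) > -1$. There is no real obstacle here — the proof is a routine substitution — so the only thing to be careful about is sign bookkeeping when flipping the limits of integration.
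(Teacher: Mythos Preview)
Your proof is correct. The paper does not actually prove this lemma; it simply cites \cite[Section 3.251]{GR94}, so there is no approach to compare against, and your substitution $x=1/t$ is the standard one-line derivation.
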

\begin{lemma}[See {{\cite[Section 3.194]{GR94}}}]\label{2. integral}
For $\Re{\beta}>-1$ and $\Re{(\alpha+\beta)}<-1$ we have
\[
\int_0^\infty x^\alpha(x+1)^\beta \,dx=B\big(-\alpha-\beta-1,\alpha+1\big).
\]
\end{lemma}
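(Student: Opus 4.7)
The plan is to reduce the half-line integral to the standard Beta function integral on $(0,1)$ by a fractional linear change of variable, and then invoke the symmetry $B(p,q)=B(q,p)$.

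Concretely, I would substitute $t=\frac{x}{1+x}$, equivalently $x=\frac{t}{1-t}$, which maps $(0,\infty)$ bijectively onto $(0,1)$ and satisfies $1+x=\frac{1}{1-t}$ and $dx=\frac{dt}{(1-t)^{2}}$. Substituting into the integrand gives
\[
x^\alpha(x+1)^\beta\,dx = \frac{t^\alpha}{(1-t)^\alpha}\cdot(1-t)^{-\beta}\cdot\frac{dt}{(1-t)^{2}} = t^\alpha(1-t)^{-\alpha-\beta-2}\,dt,
\]
so that
\[
\int_0^\infty x^\alpha(x+1)^\beta\,dx = \int_0^1 t^{(\alpha+1)-1}(1-t)^{(-\alpha-\beta-1)-1}\,dt = B(\alpha+1,-\alpha-\beta-1),
\]
which equals $B(-\alpha-\beta-1,\alpha+1)$ by the symmetry of the Beta function.

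There is essentially no obstacle beyond the book-keeping of hypotheses: convergence of the original integral at $0$ requires $\Re\alpha>-1$ and convergence at $\infty$ requires $\Re(\alpha+\beta)<-1$, which translate under the substitution into precisely the convergence conditions $\Re(\alpha+1)>0$ and $\Re(-\alpha-\beta-1)>0$ for the Beta integral. Hence Fubini/absolute convergence of the substitution is automatic, and no analytic continuation argument is needed within the stated parameter range.
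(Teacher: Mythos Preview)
Your argument is correct and is exactly the standard derivation; the paper itself offers no proof, merely citing Gradshteyn--Ryzhik. One small remark: the hypothesis printed in the paper is $\Re\beta>-1$, but as you correctly observe, convergence at $x=0$ actually forces $\Re\alpha>-1$ (the condition on $\beta$ plays no role), so your proof in fact establishes the formula under the correct pair of hypotheses $\Re\alpha>-1$ and $\Re(\alpha+\beta)<-1$.
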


\section{The Fourier Transform and Riesz distributions} \label{Fourier transform and Riesz distributions}
\noindent Define the Fourier transform of $\varphi\in C_c(\RR)$ as 
\[
\mathcal{F}[\varphi](\xi)=\int_\RR \varphi(x)e^{ix\xi}\,dx
\]
which makes the inversion formula $\mathcal{F}\mathcal{F}[\varphi](x)=2\pi\varphi(-x)$. Extend this to distributions in the usual way.

For $\alpha\in \CC$ with $\Re{\alpha}>-1$ and $\varepsilon\in \{0,1\}$ the function 
\[
u_\alpha^\varepsilon (x)=\frac{1}{2^\frac{\alpha}{2}\Gamma\big(\frac{\alpha+1+\varepsilon}{2}\big)}|x|^\alpha_\varepsilon,
\]
is locally integrable and can thus be considered as a distribution. 
\begin{lemma}
The family of distributions $u_\alpha^\varepsilon$ extends analytically to a holomorpic family in $\alpha\in \CC$. For $\alpha=1-\varepsilon-2n\in 1-\varepsilon-2\NN$ we have 
\[
u_{1-\varepsilon-2n}^\varepsilon(x)=\frac{(-1)^{n+\varepsilon -1} (n-1)!}{2^{\frac{1-\varepsilon}{2}-n}(2n+\varepsilon-2)!}\delta^{(2n+\varepsilon-2)}(x),
\]
where $\delta(x)$ is the Dirac $\delta$--function.
\end{lemma}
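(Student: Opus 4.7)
The plan is to reduce the statement to the classical Riesz distributions $x_+^\alpha$ and $x_-^\alpha$ (defined by $x_+^\alpha = x^\alpha$ for $x>0$ and $0$ for $x<0$, and symmetrically for $x_-^\alpha$), and then show that the Gamma factor in the denominator of $u_\alpha^\varepsilon$ precisely cancels the poles arising from $|x|_\varepsilon^\alpha$. Observe first that
\[
|x|_\varepsilon^\alpha \;=\; x_+^\alpha + (-1)^\varepsilon x_-^\alpha,
\]
so it suffices to analyze $x_\pm^\alpha$. For $\Re\alpha > -1$ these are locally integrable, and the standard Taylor-subtraction argument (writing $\varphi(x) = \sum_{j<k}\varphi^{(j)}(0) x^j/j! + O(x^k)$ and splitting the integral at $1$) gives the meromorphic continuation of $x_+^\alpha$ to $\CC$, with simple poles at $\alpha = -k$, $k \in \NN$, and residues
\[
\Res_{\alpha=-k} x_+^\alpha = \frac{(-1)^{k-1}}{(k-1)!}\,\delta^{(k-1)}, \qquad \Res_{\alpha=-k} x_-^\alpha = \frac{1}{(k-1)!}\,\delta^{(k-1)},
\]
where the second follows from $\langle x_-^\alpha,\varphi\rangle = \langle x_+^\alpha,\check\varphi\rangle$ and $\delta^{(k-1)}(\check\varphi) = (-1)^{k-1}\delta^{(k-1)}(\varphi)$.

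Next I would combine these into Laurent expansions of $|x|^\alpha$ and $\sgn(x)|x|^\alpha$. For $\varepsilon = 0$ the residues at $\alpha = -k$ cancel when $k$ is even and equal $\frac{2}{(k-1)!}\delta^{(k-1)}$ when $k$ is odd; for $\varepsilon = 1$ they cancel when $k$ is odd and equal $\frac{-2}{(k-1)!}\delta^{(k-1)}$ when $k$ is even. Thus $|x|_\varepsilon^\alpha$ has simple poles exactly at $\alpha \in \{-1-\varepsilon,-3-\varepsilon,\ldots\}$, i.e.\ at $\alpha = 1-\varepsilon - 2n$ for $n \in \NN$. On the other hand $\Gamma\bigl(\tfrac{\alpha+1+\varepsilon}{2}\bigr)$ has simple poles at precisely the same points, with
\[
\frac{1}{\Gamma\bigl(\tfrac{\alpha+1+\varepsilon}{2}\bigr)} \;=\; \frac{(-1)^{n-1}(n-1)!}{2}\,(\alpha - \alpha_0) + O\bigl((\alpha-\alpha_0)^2\bigr), \qquad \alpha_0 = 1-\varepsilon-2n,
\]
so the quotient $u_\alpha^\varepsilon$ extends holomorphically to all $\alpha \in \CC$.

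Finally, I would evaluate $u_{\alpha_0}^\varepsilon$ as a product of the residue of $|x|_\varepsilon^\alpha$ and the above linear factor, divided by $2^{\alpha_0/2}$. In the case $\varepsilon = 0$ with $k = 2n-1$ this yields
\[
u_{1-2n}^0 \;=\; \frac{1}{2^{(1-2n)/2}}\cdot\frac{2}{(2n-2)!}\,\delta^{(2n-2)}\cdot\frac{(-1)^{n-1}(n-1)!}{2} \;=\; \frac{(-1)^{n-1}(n-1)!}{2^{1/2-n}(2n-2)!}\,\delta^{(2n-2)},
\]
and in the case $\varepsilon = 1$ with $k = 2n$ the extra minus sign from the residue of $\sgn(x)|x|^\alpha$ produces $(-1)^n$, matching $(-1)^{n+\varepsilon-1}$. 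In both cases the formula of the lemma drops out; the only real work is signs and powers of $2$, which is routine. The main potential pitfall is keeping track of the sign conventions $\delta^{(j)}[\varphi] = (-1)^j\varphi^{(j)}(0)$ when transporting residues through the reflection $x \mapsto -x$, which is why I would verify the residue of $x_-^\alpha$ explicitly rather than by symmetry.
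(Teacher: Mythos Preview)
Your argument is correct: the decomposition $|x|_\varepsilon^\alpha = x_+^\alpha + (-1)^\varepsilon x_-^\alpha$, the standard residues $\Res_{\alpha=-k}x_\pm^\alpha$, the parity cancellation, and the matching of poles with those of $\Gamma\bigl(\tfrac{\alpha+1+\varepsilon}{2}\bigr)$ all check out, and the final constants agree with the stated formula in both parities.

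There is nothing to compare with, however, because the paper does not prove this lemma. It is stated in the appendix as a standard fact about Riesz distributions (the subsequent lemma cites Gelfand--Shilov \cite{GS64}, which is the natural source for this result as well). Your write-up is exactly the classical proof one would give, so it is a fine addition rather than an alternative to anything in the paper.
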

\begin{lemma}\label{convolution constant}
For $\alpha \in \CC$ we have
\[
\mathcal{F}[u_\alpha^\varepsilon]=\sqrt{2\pi}i^\varepsilon u_{-\alpha-1}^\varepsilon.
\]
Furthermore for $\alpha,\beta \in \CC $ with $\Re{\alpha},\Re{\beta}>-1$ and $\Re{(\alpha+\beta)}<-1$ we get 
\[
\int_\RR u_\alpha^\varepsilon(x)u_\beta^\xi(y-x)\,dx =(-1)^{\lfloor \frac{\varepsilon+\xi}{2}\rfloor}\sqrt{2\pi} \frac{\Gamma\big(\frac{-1-\alpha-\beta+[\varepsilon+\xi]_2}{2}\big)}{\Gamma\big(\frac{-\alpha+\varepsilon}{2}\big)\Gamma\big(\frac{-\beta+\xi}{2}\big)}u_{\alpha+\beta+1}^{\varepsilon+\xi}(y),
\]
for $y\neq 0$. 
\end{lemma}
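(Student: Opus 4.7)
The plan is to establish the Fourier transform identity first and then deduce the convolution formula from it by convolution--multiplication duality.

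For the Fourier transform, I would first restrict to the strip $-1<\Re\alpha<0$, where $|x|^\alpha_\varepsilon$ is locally integrable. Splitting into the two parities $\varepsilon=0$ and $\varepsilon=1$, the integral $\int_\RR |x|^\alpha_\varepsilon e^{ix\xi}\,dx$ reduces by the obvious even/odd symmetry to either $2\int_0^\infty x^\alpha\cos(x\xi)\,dx$ or $2i\int_0^\infty x^\alpha\sin(x\xi)\,dx$. These are the classical Mellin--cosine and Mellin--sine transforms; for $\xi>0$ they evaluate respectively to $2\Gamma(\alpha+1)\cos(\tfrac{\pi(\alpha+1)}{2})\xi^{-\alpha-1}$ and $2i\Gamma(\alpha+1)\sin(\tfrac{\pi(\alpha+1)}{2})\xi^{-\alpha-1}$. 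Dividing by the normalisation $2^{\alpha/2}\Gamma(\tfrac{\alpha+1+\varepsilon}{2})$ in the definition of $u_\alpha^\varepsilon$, and using the Legendre duplication formula $\Gamma(z)\Gamma(z+\tfrac12) = 2^{1-2z}\sqrt\pi\,\Gamma(2z)$ together with the reflection formula $\Gamma(z)\Gamma(1-z)=\pi/\sin(\pi z)$, the trigonometric and Gamma factors collapse into the single reciprocal Gamma factor $\Gamma(\tfrac{-\alpha+\varepsilon}{2})^{-1}$, yielding
\[ \mathcal{F}[u_\alpha^\varepsilon](\xi) = \sqrt{2\pi}\,i^\varepsilon\, u_{-\alpha-1}^\varepsilon(\xi) \]
throughout the strip. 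Analytic continuation in $\alpha$, which is legitimate because the preceding lemma shows that $u_\alpha^\varepsilon$ extends to a holomorphic family of tempered distributions and the Fourier transform is continuous on $\mathcal{S}'(\RR)$, then extends the identity to all $\alpha\in\CC$.

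For the convolution formula, I would fix $\alpha,\beta$ in the stated range $\Re\alpha,\Re\beta>-1$ and $\Re(\alpha+\beta)<-1$, in which both $u_\alpha^\varepsilon$ and $u_\beta^\xi$ are locally integrable and the convolution integral converges absolutely for $y\neq 0$ and defines a tempered distribution. Taking Fourier transforms and using the first part,
\[ \mathcal{F}[u_\alpha^\varepsilon*u_\beta^\xi] = \mathcal{F}[u_\alpha^\varepsilon]\cdot\mathcal{F}[u_\beta^\xi] = 2\pi\,i^{\varepsilon+\xi}\, u_{-\alpha-1}^\varepsilon\,u_{-\beta-1}^\xi. \]
Since $\sgn(\cdot)^{\varepsilon+\xi} = \sgn(\cdot)^{[\varepsilon+\xi]_2}$, the pointwise product on the right is a scalar multiple of $u_{-\alpha-\beta-2}^{[\varepsilon+\xi]_2}$; matching normalisations gives
\[ u_{-\alpha-1}^\varepsilon\, u_{-\beta-1}^\xi = \frac{\Gamma\bigl(\tfrac{-\alpha-\beta-1+[\varepsilon+\xi]_2}{2}\bigr)}{\Gamma\bigl(\tfrac{-\alpha+\varepsilon}{2}\bigr)\Gamma\bigl(\tfrac{-\beta+\xi}{2}\bigr)}\,u_{-\alpha-\beta-2}^{[\varepsilon+\xi]_2}. \]
Inverting the Fourier transform via another application of the first part, and using the parity $u_\gamma^\delta(-y)=(-1)^\delta u_\gamma^\delta(y)$, then yields the claimed formula.

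The main obstacle is purely a bookkeeping one: verifying that the various powers of $i$ and $-1$ collapse to the single sign $(-1)^{\lfloor(\varepsilon+\xi)/2\rfloor}$. Concretely, the Fourier inversion contributes a factor $\tfrac{1}{\sqrt{2\pi}}\,i^{[\varepsilon+\xi]_2}$, the parity of $u_{-\alpha-\beta-2}^{[\varepsilon+\xi]_2}(-y)$ contributes $(-1)^{[\varepsilon+\xi]_2}$, and multiplying by the $i^{\varepsilon+\xi}$ from the forward transforms must give $(-1)^{\lfloor(\varepsilon+\xi)/2\rfloor}$. The identity $i^{\varepsilon+\xi}(-i)^{[\varepsilon+\xi]_2} = (-1)^{\lfloor(\varepsilon+\xi)/2\rfloor}$ is checked by going through the three cases $\varepsilon+\xi\in\{0,1,2\}$. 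As a cross-check, one can compute the convolution integral directly by substituting $x=yt$ for $y>0$, decomposing $\int_\RR|t|_\varepsilon^\alpha|1-t|_\xi^\beta\,dt$ into $\int_{-\infty}^0+\int_0^1+\int_1^\infty$, and evaluating the three pieces via the standard Beta integral together with Lemmas \ref{1. integral} and \ref{2. integral}; the same Gamma identities as above then reduce the answer to the right-hand side of the claim.
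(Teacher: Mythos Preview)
Your argument is correct. For the Fourier transform identity the paper simply cites \cite[p.~170]{GS64}, whereas you supply the standard direct computation; this is fine and the Gamma manipulations you indicate do collapse as claimed.

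For the convolution formula you take a genuinely different route from the paper. The paper argues directly: substitute $x\mapsto yx$ to pull out the factor $|y|^{\alpha+\beta+1}_{\varepsilon+\xi}$, then split $\int_\RR |x|_\varepsilon^\alpha |1-x|_\xi^\beta\,dx$ into $(-\infty,0)\cup(0,1)\cup(1,\infty)$ and evaluate the three pieces via the Beta integral and Lemmas~\ref{1. integral}--\ref{2. integral}, followed by Gamma-function identities. You instead exploit the first half of the lemma through convolution--multiplication duality, turning the convolution into the pointwise product $u_{-\alpha-1}^\varepsilon\,u_{-\beta-1}^\xi$ and reading off the constant from the normalisations. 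This is more conceptual and makes the appearance of $u_{\alpha+\beta+1}^{\varepsilon+\xi}$ on the right transparent; the price is that you must justify the convolution theorem for these particular tempered distributions (neither factor lies in $L^1$), which is easiest done by first restricting to the open subregion $-1<\Re\alpha,\Re\beta<0$ where all four of $u_\alpha^\varepsilon,u_\beta^\xi,u_{-\alpha-1}^\varepsilon,u_{-\beta-1}^\xi$ are locally integrable and the product $|x|^{-\alpha-\beta-2}$ is as well, then continuing analytically. Your closing cross-check via the three Beta integrals is in fact exactly the paper's proof.
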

\begin{proof}
The first assertion can be found in \cite[p.170]{GS64}. For $y\neq 0$ we have 
\[
\int_\RR |x|^\alpha_\varepsilon|y-x|^\beta_\xi\,dx =|y|^{\alpha+\beta+1}_{\xi+\varepsilon}\int_\RR |x|^\alpha_\varepsilon|1-x|^\beta_\xi \,dx,
\]
by change of variables. Now writing 
\[
\int_\RR |x|^\alpha_\varepsilon|1-x|^\beta_\xi \,dx=(-1)^\varepsilon\int_0^\infty x^\alpha(1+x)^\beta\,dx
+\int_0^1x^\alpha(1-x)^\beta\,dx
+(-1)^\xi \int_1^\infty x^\alpha(x-1)^\beta\,dx,
\]
we can use the integral formula for the Beta-function, apply Lemma \ref{1. integral}, \ref{2. integral} and then do some Gamma-function yoga.
\end{proof}

\section{Fourier-Jacobi transform} \label{Fourier Jacobi}
\noindent This section is a condensed form of \cite[Appendix 1]{FJ77}. 
For $\alpha,\beta \in \CC$ with $\alpha\notin -\NN $ and $\Re{\beta}>-1$, define the Fourier--Jacobi transform of $f\in C_c^\infty(\RR_{\geq 0})$ by
\[
J_{\alpha,\beta}f(\mu)=\int_0^\infty f(t) \phi^{\alpha,\beta}_\mu(t) \cosh^{2\alpha+1}(t)\sinh^{2\beta+1}(t)\,dt,
\]
where $\phi^{\alpha,\beta}_\mu$ are the Jacobi functions given by
\[
\phi^{\alpha,\beta}_\mu(t)=\, _2F_1\left(\frac{\alpha+\beta+1+\mu}{2},\frac{\alpha-\beta+1+\mu}{2};\alpha+1;-\sinh^2(t)\right).
\]
Then we have the following inversion formula:
\begin{multline*}
    f(t)=\frac{1}{4\pi}\int_{i\RR} J_{\alpha,\beta}(\mu)\phi^{\alpha,\beta}_\mu(t)\frac{d\mu}{|c_{\alpha,\beta}(\mu)|^2}
    -\sum_{\mu\in D_{\alpha,\beta}} J_{\alpha,\beta}(\mu)\phi^{\alpha,\beta}_\mu(t)\underset{\nu=\mu}{\Res}\big(c_{\alpha,\beta}(\nu)c_{\alpha,\beta}(-\nu)\big)^{-1},
\end{multline*}
where
\[
c_{\alpha,\beta}(\mu)=\frac{\Gamma(\mu)\Gamma(\alpha+1)}{\Gamma\big(\frac{\alpha+|\beta|+1+\mu}{2}\big)\Gamma\big(\frac{\alpha-|\beta|+1+\mu}{2}\big)},
\]
and 
\[
D_{\alpha,\beta}=\{x \in \RR \,|\, k\in \NN_0,\, x=2k+1+\alpha-|\beta|<0\}.
\]

\bibliographystyle{amsplain}
\bibliography{biblio.bib}

\end{document}